\documentclass[11pt]{article}
\usepackage{tikz}
\usepackage{setspace}
\usepackage{amsthm,amsfonts,amssymb,epsfig,graphics,amsmath,amsbsy,subfigure}
\setlength{\textwidth}{6.5in}
\setlength{\topmargin}{0pt}
\setlength{\voffset}{-.25in}
\setlength{\evensidemargin}{0pt}
\setlength{\oddsidemargin}{0pt}
\setlength{\hoffset}{0pt}
\setlength{\textheight}{600pt}

\parindent=0pt                  
\parskip=0.15 true in
\usepackage{color}		
\usepackage{epsfig}
\newcommand{\intL}{\int\limits}
\newcommand{\half}{^\infty_0 }
\newcommand{\intR}{\int\limits_{\mathbb{R}} }
\newcommand{\intRR}{\int\limits_{\mathbb{R}^2} }
\newcommand{\RR}{\mathbb{R}}
\newcommand{\xx}{\mathbf{x}}
\newcommand{\zz}{\mathbf{z}}
\newcommand{\pp}{\mathbf{p}}
\usepackage{amssymb}
\usepackage{amsmath}
\usepackage{graphicx}
\newtheorem{theorem}{Theorem}
\newtheorem{definition}[theorem]{Definition}
\newtheorem{remark}[theorem]{Remark}
\newtheorem{corollary}[theorem]{Corollary}
\newtheorem{lemma}[theorem]{Lemma}

\title{On the determination of a function from cylindrical Radon transforms}
\author{ Sunghwan Moon}
\date{Department of Mathematical Sciences\\[-0.1em]
\normalsize
Ulsan National Institute of Science and Technology\\[-0.1em]
\normalsize
Ulsan 689-798, Republic of Korea\\[-0.1em]
\normalsize
{\tt shmoon@unist.ac.kr}}
\begin{document}

\maketitle

\begin{abstract}
This paper is devoted to Radon-type transforms arising in Photoacoustic Tomography that uses integrating line detectors. 
We consider two situations: when the line detectors are tangent to the boundary of a cylindrical domain and when the line detectors are located on a plane. 
We present the analogue of the Fourier slice theorems for each case of the Radon-type transforms. 
Also, we provide several new inversion formulas, a support theorem, and stability estimate and necessary range condition results. 


\end{abstract}

\section{Introduction}
Photoacoustic Tomography (PAT) is the best-known example of a hybrid imaging method. It has applications to functional brain imaging of animals, early cancer diagnostics, and imaging of vasculature~\cite{haltmeier09}. In 1880, A.G. Bell discovered the photo-acoustic effect~\cite{bell80}. 
Nearly 100 years later, it was realized that this effect enables one to combine advantages of pure optical and ultrasound imaging, providing both high optical contrast and ultrasonic resolution~\cite{bowen81}. 
Nevertheless, PAT has rather low cost. 

In PAT, one induces an acoustic pressure wave inside of an object of interest by delivering optical energy~\cite{kuchmentk08,xuw06}. 
{\color{black} This acoustic wave is measured by ultrasound detectors placed on outside of the object.
Irradiated cancerous cells absorb several times more electromagnetic (EM) energy than the surrounding healthy tissues. 
Thus, the absorption function, the density of energy absorbed at a location, contains valuable diagnostic information.
The photoacoustic effect implies that the initial value of a pressure wave is essentially the absorption function~\cite{kuchment12}.}
Mathematically, in the model we study, the problem boils down to recovering the initial data of the three dimensional wave equation from the values of the solution observed at all times on the surface. 
{\color{black} This idea was implemented in the middle of 1990s \cite{krugerlfa95,krugerrk99,oraevskyejt96}.
(There are some surveys and books for details and further references, e.g.,~\cite{krugerkmrrkh00,kuchmentk08,kuchment12,kuchment14book,patchs07,xuw06}.)}

Various types of detectors have been considered for measuring the acoustic data: point-like detectors, line detectors, planar detectors, cylindrical detectors, and circular detectors (see \cite{burgholzerbmghp07,burgholzerhphs05,grattpnp11,moontat14,zangerls10,zangerlsh09,zangerls09}).
While point-like detectors approximately measure the pressure at a given point, other types of detectors are integrating. 
{\color{black} More specifically, the line detector measures the value of integral of the pressure along its length. 
 This data is equivalent to an Abel-type transform of the surface integral over the cylinders with central axis corresponding to a detector line and whose radii are arbitrary (for detailed information, see~\cite{haltmeier09,paltaufnhb07}).
Since an Abel-type transform can be inverted, PAT with the line detectors leads to the mathematical problem of reconstructing a function from integrals over cylindrical surfaces.}

Various configurations of line detectors have been considered in~\cite{burgholzerbmghp07,burgholzerhphs05,haltmeierf06,haltmeier09,haltmeier11,paltaufnhb07}.
In this article, we deal with two basic geometries: the line detectors are tangent to a cylinder, and the line detectors are located on a plane. 
We call these \textbf{the cylindrical version} and \textbf{the planar version}, respectively.
Some inversion formulas for the first version were found in~\cite{haltmeier11}. 
In this text, we address other issues of importance in tomography~\cite{natterer01,nattererw01}: a support theorem, a stability estimate, and necessary range conditions. We also consider an $n$-dimensional case of this model. 
In the planar version, Haltmeier \cite{haltmeier09} provided a two-step procedure for image reconstruction.
In this text, we define a cylindrical Radon transform and present an analogue of the Fourier slice theorem as well as a stability estimate and necessary range conditions.


Two different versions of the cylindrical Radon transform are discussed in sections~\ref{defiandworkcylinder} and~\ref{plane}. Various inversion formulas of the cylindrical version of a cylindrical Radon transform different from those in~\cite{haltmeier11} are provided in section~\ref{recon}. Section~\ref{sec:uniquess} is devoted to a support theorem for this version of the transform. In sections~\ref{estiandrangecylinder} and~\ref{rangecylinder}, we provide a stability estimate and the necessary range conditions of the transform.
We also provide inversion formulas, a stability estimate, and the necessary range conditions of the plane version of a cylindrical Radon transform in sections~\ref{reconplane},~\ref{estiandrangeplane}, and~\ref{rangeplane}, respectively. 
In sections~\ref{sec:reconnd} and~\ref{planendimension}, we study $n$-dimensional cases of cylindrical Radon transforms.
\section{Cylindrical geometry}\label{defiandworkcylinder}
We explain first the mathematical model arising in PAT with line detectors as introduced in~\cite{haltmeier11}.
Let $B^k_R$ be the ball in $\RR^k$ centered at the origin with radius $R>0$.
Then $B^2_R\times\RR$ is the solid cylinder in $\RR^3$ with radius $R$. 
For fixed $p\in\RR$ and $\boldsymbol\theta\in S^1$, let 
$$
L_C(\boldsymbol\theta,p)=\{(x,y,z)\in\RR^3:(x,y)\cdot\boldsymbol\theta=R,z=p\}
$$
be the line occupied by a detector.
Detector lines $L_C(\boldsymbol\theta,p)$ are tangent to the cylinder $B^2_R\times\RR$ (see Figure 1).

\begin{figure}[here]
\begin{center}
  \begin{tikzpicture}[>=stealth,scale=1]
    \draw[dotted] (0,-2.3) -- (0,4);
    \draw[<->,loosely dashed] (0,-1.5) -- (2,-1.5);
    \fill[semitransparent,green] (0,0) ellipse (0.4 and 0.8) ;
    \draw[very thick] (0,1.9) ellipse (2 and 0.5) ;
    \draw[very thick, dashed] (2,-1.5) arc (0:180:2 and 0.5) ;
    \draw[very thick] (-2,-1.5) arc (180:360:2 and 0.5) ;
    \draw[very thick] (-2,-1.5) -- (-2,1.9);
    \draw[very thick] (2,-1.5) -- (2,1.9);
    \draw[densely dashed] (-3.8,-1.8) -- (1.5,3.5);
    \draw[very thick] (-3.5,-0.5) -- (0.5,3.5);
    \draw[very thick] (-2.5,-1.5) -- (1.5,2.5);
    \draw[very thick,rotate around={-45:(-3,-1)}] (-3,-1) ellipse (1.4141/2 and 0.35) ;
    \draw[very thick,rotate around={-45:(0.5,3.5)},dashed] (0.5,3.5) arc (180:360:1.4141/2 and 0.35) ;
    \draw[very thick,rotate around={-45:(1.5,2.5)}] (1.5,2.5) arc (0:180:1.4141/2 and 0.35) ;
    \node at (0,-2.8) {(a)};
    \node at (-4.1,-2.2) {$L_C(\boldsymbol\theta,p)$};
    \node at (0.1,4.2) {$z$};
    \node at (0.5,-1.3) {R};
    \node at (-2.2,0) {$p$};
    \node at (0,0) {$f$};
    \draw[dotted] (5,-2.2) -- (5,4);    
    \draw[densely dashed] (7,-2.2) -- (7,4);    
    \draw[<->,loosely dashed] (7,-1.8) -- (5,-1.8);    
    \draw[dotted] (7,.5) -- (5,.5); 
    \draw[very thick] (7,0.5) circle (1.5);
    \draw[<->,loosely dashed] (8.5,0.5) -- (7,0.5);    
    \draw[densely dashed] (7+1.732*1.5/2,0.5+1.5/2) -- (7,0.5);    
    \draw[very thick,rotate around={-45:(7.4,.5)}] (7.4,.5) arc (50:80:0.4 and 0.4) ;
    \node at (7,-2.8) {(b)};
    \node at (5.1,4.2) {$z$};
    \node at (6,-1.5) {R};
    \node at (4.8,.5) {$p$};
    \node at (7.5,0.2) {$r$};
    \node at (7.7,0.7) {$\psi$};
  \end{tikzpicture}
\caption{(a) the cylinder of integration whose the central axis is tangent to the cylinder $B^2_R\times \RR$ 
and (b) the restriction to the $\{(t\boldsymbol\theta,z):t\in\RR,z\in\RR\}$ plane}
\end{center}
\label{fig:cylinder}
\end{figure}
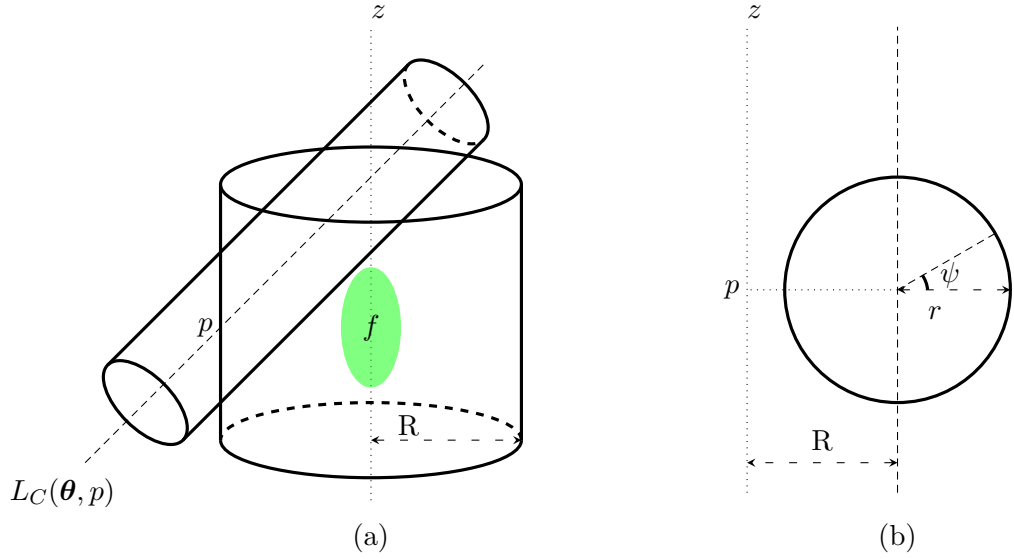 
\begin{definition}
The cylindrical Radon transform $R_C$ maps a function $f\in C^\infty_c(B^2_R\times\RR)$ to 
 $$
 R_Cf(\boldsymbol\theta,p,r)=\displaystyle\frac{1}{2\pi r}\iint\limits_{d(L_C(\boldsymbol\theta,p),(x,y,z))=r} f(x,y,z)d\varpi,
 $$
 for $(\boldsymbol\theta,p,r)\in S^1\times\RR\times[0,\infty)$.
Here $d\varpi$ is the area measure on the cylinder 
$$
\{(x,y,z)\in\RR^3:d(L_C(\boldsymbol\theta,p),(x,y,z))=r\}
$$ 
and 
$$
d(L_C(\boldsymbol\theta,p),(x,y,z)):=\sqrt{(R-(x,y)\cdot\boldsymbol\theta)^2+(p-z)^2}
$$ 
denotes the Euclidean distance between the line $L_C(\boldsymbol\theta,p)$ and the point $(x,y,z)$.
\end{definition}


%
%

By definition, we have
$$
R_Cf(\boldsymbol\theta,p,r)=\displaystyle\frac{1}{2\pi}\intR\int\limits^{\pi}_{-\pi}f(t\boldsymbol\theta^\perp+(R-r\cos\psi)\boldsymbol\theta, p+r\sin\psi)d\psi dt,
$$
{\color{black} where $p$ and $r$ are the height and radius of the cylinder of integration and $\boldsymbol\theta$ is the direction from the $z$-axis to the central axis of the cylinder.
Also, $t$ is a parameter along the central axis of the cylinder, $\boldsymbol\theta^\perp$ in the $xy$-plane is any unit vector perpendicular to $\boldsymbol\theta$,} and $\psi$ is the polar angle of the circle that is the intersection of the plane $\{(t\boldsymbol\theta,z):t\in\RR,z\in\RR\}$ and the cylinder (see Figure 1 (b)).

\subsection{Inversion formulas}\label{recon}

We have two integrals in the definition formula of $R_Cf$. 
For fixed $\boldsymbol\theta$, the inner integral is a circular Radon transform with centers on the line $\{(R\boldsymbol\theta,z):z\in\RR^{ }\}$ (see Figure 1 (b)). 
Also, the outer integral can be {\color{black}thought} of as the $2$-dimensional regular Radon transform for a fixed $z$ variable~\cite{haltmeier11}.
We start by applying the inversion of the circular Radon transform for fixed $\boldsymbol\theta$.

To obtain inversion formulas, we define the operator 
 $R^*_C$ for $g\in C^\infty_c(S^{1}\times\RR^{ }\times[0,\infty))$ by
$$
R^*_Cg(\boldsymbol\theta,\zeta,\rho)=\displaystyle\intR g(\boldsymbol\theta,p,\sqrt{(\zeta-p)^2+\rho^2})dp,
$$
for $z\in\RR^{ }$ and $\rho\in\RR$.

We have an analogue of the Fourier slice theorem.
\begin{theorem}\label{thm:inversioncylinder3d}
Let $f\in C^\infty_c(B^{2}_R\times \RR)$. 
If $g=R_Cf$, then we have for $(\boldsymbol\theta,\sigma,\xi)\in S^{1}\times\RR\times \RR^{}$,
\begin{equation}\label{eq:fourierslice}
\hat{f}(\sigma\boldsymbol\theta,\xi)=\pi^{-1}\widehat{R^*_Cg}(\boldsymbol\theta,\xi,\sigma)e^{-iR\sigma}|\sigma|,
\end{equation}
{\color{black}where $\hat f$ is the 3-dimensional Fourier transform of $f$, i.e.,
$$
\hat{f}(\xi_1,\xi_2,\xi_3)=\intL_{\RR^3}f(x,y,z)e^{-i(x,y,z)\cdot (\xi_1,\xi_2,\xi_3)} dxdydz,
$$
and $\widehat{R^*_Cg}$ is the $2$-dimensional Fourier transform of $R^*_Cg$ with respect to $(\zeta,\rho)$, i.e.,
$$
\widehat{R^*_Cg}(\boldsymbol\theta,\xi,\sigma)=\intR\intR R^*_Cg(\boldsymbol\theta,\zeta,\rho)e^{-i(\zeta,\rho)\cdot(\xi,\sigma)}dpd\rho.
$$}
\end{theorem}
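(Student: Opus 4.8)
The strategy is to exploit the nested structure of $R_C$ pointed out just above the statement: for each fixed $\boldsymbol\theta\in S^1$ the map $f\mapsto R_Cf(\boldsymbol\theta,\cdot,\cdot)$ is a circular mean transform with centers on a line applied to the $2$-dimensional Radon transform of $f$ in the $(x,y)$-variables, and a Fourier-slice identity holds for each of these two pieces separately. First I would reduce to two dimensions: exchanging the order of the $t$- and $\psi$-integrations in the second formula for $R_Cf$ gives, for fixed $\boldsymbol\theta$,
\begin{equation*}
R_Cf(\boldsymbol\theta,p,r)=\frac{1}{2\pi}\int_{-\pi}^{\pi}F_{\boldsymbol\theta}(R-r\cos\psi,\,p+r\sin\psi)\,d\psi,
\qquad
F_{\boldsymbol\theta}(s,z):=\intR f(t\boldsymbol\theta^\perp+s\boldsymbol\theta,\,z)\,dt,
\end{equation*}
so $R_Cf(\boldsymbol\theta,\cdot,\cdot)$ is precisely the circular mean transform of $F_{\boldsymbol\theta}$ over the circles in the $(s,z)$-plane with centers $(R,p)$ on the line $\{s=R\}$. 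Since $f$ is supported in $B^2_R\times\RR$, the function $F_{\boldsymbol\theta}(s,z)$ vanishes for $|s|\ge R$, hence is supported strictly on one side of the line of centers; it is convenient to set $\tilde F_{\boldsymbol\theta}(u,z):=F_{\boldsymbol\theta}(u+R,z)$, supported in $-2R<u<0$, so that the centers sit on $\{u=0\}$.

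Next comes the easy half, the projection–slice theorem for the inner integral. Because $F_{\boldsymbol\theta}(\cdot,z)$ is the ordinary $2$-dimensional Radon transform of $f(\cdot,\cdot,z)$ in direction $\boldsymbol\theta$, a rotation of the $(x,y)$-variables together with Fubini gives $\widehat{F_{\boldsymbol\theta}}(\sigma,\xi)=\hat f(\sigma\boldsymbol\theta,\xi)$, where $\widehat{F_{\boldsymbol\theta}}$ denotes the $2$-dimensional Fourier transform in $(s,z)$; equivalently $\widehat{\tilde F_{\boldsymbol\theta}}(\sigma,\xi)=e^{iR\sigma}\hat f(\sigma\boldsymbol\theta,\xi)$. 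In view of this, \eqref{eq:fourierslice} is equivalent to the slice identity $\widehat{\tilde F_{\boldsymbol\theta}}(\sigma,\xi)=\pi^{-1}|\sigma|\,\widehat{R^*_Cg}(\boldsymbol\theta,\xi,\sigma)$ for the circular mean transform paired with $R^*_C$.

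To prove this last identity I would write $\tilde F_{\boldsymbol\theta}$ as a superposition of plane waves: the circular mean of $e^{i(u\kappa_1+z\kappa_2)}$ over the circle centered at $(0,p)$ of radius $r$ equals $e^{ip\kappa_2}J_0\!\big(r\sqrt{\kappa_1^2+\kappa_2^2}\big)$, so $R_Cf(\boldsymbol\theta,p,r)$ is an explicit superposition of such terms. Inserting this into the definition of $R^*_Cg$, carrying out the $p$-integration, and then applying the $2$-dimensional Fourier transform in $(\zeta,\rho)$ leaves, after one Sonine-type integral $\intR e^{iw\xi}J_0\!\big(\sqrt{(\xi^2+\sigma^2)(u^2+w^2)}\big)\,dw=2|\sigma|^{-1}\cos(u\sigma)$ (valid since $|\xi|\le\sqrt{\xi^2+\sigma^2}$), a one-dimensional transform of $\tilde F_{\boldsymbol\theta}$ in the $u$-variable. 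At this point the one-sided support of $\tilde F_{\boldsymbol\theta}$ — which is exactly what makes the circular mean transform with centers on a line injective — is used to pass from the resulting (even in $\sigma$) expression back to $\widehat{\tilde F_{\boldsymbol\theta}}(\sigma,\xi)$ itself; bookkeeping of the constants and of the ramp factor $|\sigma|$ then closes the argument. All interchanges of integration are justified by $f\in C^\infty_c$.

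I expect the third step to be where essentially all of the work lies: getting the Bessel integrals to collapse to precisely the multiplier $\pi^{-1}e^{-iR\sigma}|\sigma|$, tracking the translation by $R$ that produces the phase $e^{-iR\sigma}$, and — most delicately — invoking the support hypothesis $f\in C^\infty_c(B^2_R\times\RR)$ in the correct way so that the Fourier-domain relation pins down $\hat f$ itself and not merely its symmetrization in $\sigma$. The reduction of Step 1 and the projection–slice computation of Step 2 are routine by comparison.
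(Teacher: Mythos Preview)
Your proposal is correct and follows essentially the same route as the paper: both factor $R_C$ through the regular Radon projection $F_{\boldsymbol\theta}$ and then handle the circular-mean part via the same Bessel/Sonine integral (your identity $\int_\RR e^{iw\xi}J_0\big(\sqrt{(\xi^2+\sigma^2)(u^2+w^2)}\big)\,dw=2|\sigma|^{-1}\cos(u\sigma)$ is exactly the paper's tabulated formula \eqref{eq:batemann2} with $a=\sqrt{\xi^2+\sigma^2}$), and both use the one-sided support of $F_{\boldsymbol\theta}$ to upgrade the resulting cosine-transform relation to a full Fourier relation. The only difference is organizational: the paper takes the Fourier transform in $p$ and the Hankel transform in $r$ of $R_Cf$ first and connects to $R^*_C$ at the end via the identity $\widehat{R^*_Cg}(\boldsymbol\theta,\xi,\sigma)=2\pi\,H_0\hat g(\boldsymbol\theta,\xi,|(\xi,\sigma)|)$, whereas you separate off $F_{\boldsymbol\theta}$ at the outset and work directly with $R^*_C$.
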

\begin{remark}\label{rmk:fourierslicecylinder}
We remind the readers of the Fourier slice theorems for the circular and regular Radon transforms.

When $\mathcal Rf(\boldsymbol\theta,s)=\int_{\boldsymbol\theta\cdot (x,y)=s}f(x,y)dxdy$ for $(\boldsymbol\theta,s)\in S^1\times\RR$ is the regular Radon transform, we have $\widehat{\mathcal Rf}(\boldsymbol\theta,\sigma)=\hat f(\sigma\boldsymbol\theta)$.
Also, when $Mf(u,r)=\int_{S^1}f((u,0)+r\boldsymbol\alpha)dS(\boldsymbol\alpha)$ for $(u,r)\in\RR\times[0,\infty)$ is the circular Radon transform, we have $\hat f(\xi_1,\xi_2)=\widehat{M^*Mf}(\xi_1,\xi_2)|\xi_2|$, where $dS$ is the standard measure on the unit circle and $M^*g(x,y)=\int_\RR g(u,\sqrt{(u-x)^2+y^2})du$ for a function $g$ on $\RR\times[0,\infty)$.
{\color{black}(For the proof, see~\cite{nattererw01,nilsson97}.)}
Equation~\eqref{eq:fourierslice} can be thought of as the combination of two Fourier slice theorems for the circular and regular Radon transforms.
\end{remark}
\begin{proof}
Taking the Fourier transform of $R_Cf$ with respect to $p$ yields 
$$
\widehat{R_Cf}(\boldsymbol\theta,\xi,r)=\frac{1}{2\pi }\intR\int\limits^1_{-1}\hat{f}(t\boldsymbol\theta^\perp+( R -r\sqrt{1-s^2})\boldsymbol\theta, \xi)e^{irs\xi}\frac{ds}{\sqrt{1-s^2}} dt,
$$
where $\hat{f}$ and $\widehat{R_Cf}$ are the 1-dimensional Fourier transforms of $f$ and $R_Cf$ with respect to $z$ and $p$, respectively.
Taking the Hankel transform of order zero of $\widehat{R_Cf}$ with respect to $r$, we have 
\begin{equation}\label{eq:hankelrcfcylinder}
\begin{array}{ll}
H_0\widehat{R_Cf}(\boldsymbol\theta,\xi,\eta)&=\displaystyle\frac{1}{2\pi }\int\limits\half\intR\int\limits^1_{-1}\hat{f}(t\boldsymbol\theta^\perp+( R -r\sqrt{1-s^2})\boldsymbol\theta, \xi)e^{irs\xi}\frac{ds dt}{\sqrt{1-s^2}}\;J_0(r\eta)rdr\\
   &=\displaystyle\frac{1}{2\pi }\int\limits\half\intR\int\limits^1_{-1}\hat{f}(t\boldsymbol\theta^\perp+( R -r\sqrt{1-s^2})\boldsymbol\theta, \xi)\cos(rs\xi)\frac{rJ_0(r\eta)dsdtdr}{\sqrt{1-s^2}} \\
      &=\displaystyle\frac{1}{2\pi }\intR\int\limits\half\int\limits\half\hat{f}(t\boldsymbol\theta^\perp+( R -b)\boldsymbol\theta, \xi)\cos(\rho\xi) J_0(\eta\sqrt{\rho^2+b^2})d\rho dbdt,
  \end{array}
  \end{equation}
  where in the last line, we changed variables $(r,s)\rightarrow (\rho,b)$ where $r=\sqrt{\rho^2+b^2}$ and $s=\rho/\sqrt{\rho^2+b^2}$.
We will use the following identity: for $a,b>0$
\begin{equation}\label{eq:batemann2}
\displaystyle\int\limits\half J_0(a\sqrt{\rho^2+b^2})\cos(\rho\xi)d\rho=\left\{\begin{array}{ll}\dfrac{1}{\sqrt{a^2-\xi^2}}\cos(b\sqrt{a^2-\xi^2}) &\mbox{ if } 0<\xi<a,\\
0&\mbox{ otherwise}\end{array}\right.
\end{equation}
\cite[p.55 (35) vol.1]{batemann}.
Applying this identity \eqref{eq:batemann2} to equation~\eqref{eq:hankelrcfcylinder}, we get 
\begin{equation*}
H_0\widehat{R_Cf}(\boldsymbol\theta,\xi,\eta)=\left\{\begin{array}{ll}\displaystyle\frac{1}{2\pi }\intR\int\limits\half\hat{f}(t\boldsymbol\theta^\perp+( R -b)\boldsymbol\theta, \xi)\dfrac{\cos(b\sqrt{\eta^2-\xi^2})}{\sqrt{\eta^2-\xi^2}}dbdt\;&\mbox{ if } 0<\xi<\eta,\\
0&\mbox{ otherwise.}\end{array}\right.
 \end{equation*}
Substituting $\eta=\sqrt{\xi^2+\sigma^2}$ yields
\begin{equation*}
H_0\widehat{R_Cf}(\boldsymbol\theta,\xi,|(\xi,\sigma)|)=\displaystyle\frac{1}{2\pi }\intR\int\limits\half\hat{f}(t\boldsymbol\theta^\perp+( R -b)\boldsymbol\theta, \xi)\dfrac{\cos(b\sigma)}{\sigma}dbdt.
 \end{equation*}
The inner integral in the right hand side is the Fourier cosine transform with respect to $b$, so taking the Fourier cosine transform of the above formula, we get 
\begin{equation}\label{relationhankelandfouriercylinder}
\displaystyle\intR \hat{f}(t\boldsymbol\theta^\perp+( R -s)\boldsymbol\theta, \xi)dt= 4\int\limits\half H_0\widehat{R_Cf}(\boldsymbol\theta,\xi,|(\xi,\sigma)|)\cos(s\sigma)\sigma d\sigma,
\end{equation}
where $\hat{f}$ is the 1-dimensional Fourier transform of $f$ with respect to the last variable $z$.
For a fixed $\xi$, one recognizes the Radon transform in the left side. 
We, thus, can apply the Fourier slice theorem for the regular Radon transform. 

Before doing that, we change the right side of equation~\eqref{relationhankelandfouriercylinder} into a term containing the backprojection operator $R_C ^*$.
Taking the Fourier transform of $R^*_Cg$ on $S^1\times\RR^2$ with respect to the last two variables $(\zeta,\rho)$ yields 
\begin{equation}\label{eq:relationhankelandbackcylinder}
\begin{array}{ll}
   \widehat{R^*_Cg}(\boldsymbol\theta, \xi,\sigma)&=\displaystyle\intR\intR e^{-i(\zeta,\rho)\cdot(\xi,\sigma)} R^*_Cg(\boldsymbol\theta,\zeta,\rho)d\zeta d\rho\\
   &=\displaystyle\intR\intR e^{-i(\zeta,\rho)\cdot (\xi,\sigma)} \intR g(\boldsymbol\theta,p,\sqrt{(\zeta-p)^2+\rho^2} )dpd\zeta d\rho\\
&=\displaystyle\intR e^{-i\xi p}\intR\intR e^{-i(\zeta-p,\rho)\cdot(\xi,\sigma)} g(\boldsymbol\theta,p,\sqrt{(\zeta-p)^2+\rho^2} )d\zeta d\rho dp\\
&=\displaystyle\intR e^{-i\xi p} \intR\intR e^{-i(\zeta,\rho)\cdot (\xi,\sigma)} g(\boldsymbol\theta,p,|(\zeta,\rho)| )d\zeta d\rho dp\\
&\displaystyle=2\pi\displaystyle\intR e^{-i\xi p}H_0g(\boldsymbol\theta,p,|(\xi,\sigma)| )dp\\
&=2\pi H_0\hat{g}(\boldsymbol\theta,\xi,|(\xi,\sigma)| ).
  \end{array}
  \end{equation}
Combining equation~\eqref{eq:relationhankelandbackcylinder} with equation~\eqref{relationhankelandfouriercylinder}, we have for $g=R_Cf$,
\begin{equation}\label{relationfourierandbackcylinder}
\begin{array}{ll}
\displaystyle \intR \hat{f}(t\boldsymbol\theta^\perp+s\boldsymbol\theta, \xi)dt &\displaystyle=\frac{2}{\pi} \int\limits\half \widehat{R^*_Cg}(\boldsymbol\theta, \xi,\sigma) \cos((R-s)\sigma)\sigma d\sigma\\
&\displaystyle=\frac{2}{\pi} \int\limits\half \widehat{R^*_Cg}(\boldsymbol\theta, \xi,\sigma) \cos((s-R)\sigma)\sigma d\sigma\\
&\displaystyle=\frac{1}{\pi} \intR \widehat{R^*_Cg}(\boldsymbol\theta, \xi,\sigma) e^{i(s-R)\sigma}|\sigma| d\sigma,
 \end{array}
\end{equation}
since $\widehat{R_C^*g}$ is even in $\sigma$ by the evenness in $\rho$ of $R^*_Cg$.
Taking the Fourier transform of equation~\eqref{relationfourierandbackcylinder} with respect to $s$ completes the proof.
\end{proof}

\begin{theorem}\label{thm:3dcylinder}
Let $f\in C^\infty_c(B^2_R\times\RR^{})$. If $g=R_Cf$, then we have 
\begin{equation}\label{eq:inversioncyliner}
f(x,y,z)=-\frac{1}{4\pi^2}\intL_{S^{1}}\left.\frac{\partial^2}{\partial\rho^2}R^*_Cg(\boldsymbol\theta,z,\rho)\right|_{\rho=(x,y)\cdot\boldsymbol\theta-R}dS(\boldsymbol\theta),
\end{equation}
where $dS$ is the standard measure on the unit circle $S^1$.
\end{theorem}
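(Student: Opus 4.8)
The plan is to read off \eqref{eq:inversioncyliner} directly from the Fourier slice identity \eqref{eq:fourierslice} by inverting the three-dimensional Fourier transform. The decisive point is that the weight $|\sigma|$ appearing in \eqref{eq:fourierslice} combines with the Jacobian $|\sigma|$ of polar coordinates in the first two frequency variables to give the \emph{polynomial} symbol $\sigma^2$; this is the symbol of the local operator $-\partial_\rho^2$, so the resulting inversion is local in the radial variable and no Hilbert-transform factor survives.

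Concretely, I would proceed as follows. First I would invoke the inversion formula $f(x,y,z)=(2\pi)^{-3}\intL_{\RR^3}\hat f(\zeta_1,\zeta_2,\xi)e^{i(x\zeta_1+y\zeta_2+z\xi)}\,d\zeta_1\,d\zeta_2\,d\xi$, which is legitimate because $f\in C^\infty_c(B^2_R\times\RR)$ makes $\hat f$ Schwartz. Next I would pass to polar coordinates $(\zeta_1,\zeta_2)=\sigma\boldsymbol\theta$ in the first two variables; since the integrand is invariant under $(\sigma,\boldsymbol\theta)\mapsto(-\sigma,-\boldsymbol\theta)$, one may let $\sigma$ run over all of $\RR$ at the cost of a factor $\tfrac12$ and a weight $|\sigma|$, obtaining
\[
 f(x,y,z)=\frac{1}{2(2\pi)^3}\intL_{S^1}\intR\intR\hat f(\sigma\boldsymbol\theta,\xi)\,e^{i(\sigma(x,y)\cdot\boldsymbol\theta+z\xi)}\,|\sigma|\,d\xi\,d\sigma\,dS(\boldsymbol\theta).
\]
Then I would substitute \eqref{eq:fourierslice}; using $|\sigma|\,|\sigma|=\sigma^2$ and $e^{-iR\sigma}e^{i\sigma(x,y)\cdot\boldsymbol\theta}=e^{i\sigma((x,y)\cdot\boldsymbol\theta-R)}$, this becomes
\[
 f(x,y,z)=\frac{1}{2\pi(2\pi)^3}\intL_{S^1}\intR\intR\sigma^2\,\widehat{R^*_Cg}(\boldsymbol\theta,\xi,\sigma)\,e^{i(\xi z+\sigma((x,y)\cdot\boldsymbol\theta-R))}\,d\xi\,d\sigma\,dS(\boldsymbol\theta).
\]
Finally I would recognize the inner $(\xi,\sigma)$-integral as $(2\pi)^2$ times the two-dimensional inverse Fourier transform (in the variables dual to $(\zeta,\rho)$) of $\sigma^2\,\widehat{R^*_Cg}(\boldsymbol\theta,\cdot,\cdot)$, evaluated at $(\zeta,\rho)=(z,(x,y)\cdot\boldsymbol\theta-R)$; since $\sigma$ is dual to $\rho$, the multiplier $\sigma^2$ turns into $-\partial_\rho^2$, so this integral equals $-(2\pi)^2\,\partial_\rho^2R^*_Cg(\boldsymbol\theta,z,\rho)\big|_{\rho=(x,y)\cdot\boldsymbol\theta-R}$. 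Collecting the constants, $(2\pi)^{-3}\cdot\tfrac12\cdot\pi^{-1}\cdot(2\pi)^2=\tfrac1{4\pi^2}$, and retaining the minus sign gives exactly \eqref{eq:inversioncyliner}.

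I do not anticipate a genuine obstacle: every step is Fubini, the polar change of variables, or Fourier inversion, each justified by the smoothness and compact support of $f$ (and the attendant regularity of $R^*_Cg$ — the inner circular Radon transform is smooth and even in its radial argument, so $\partial_\rho^2 R^*_Cg$ is well defined). The only step that needs care is the bookkeeping of the two $|\sigma|$-factors described above, which is precisely what makes the filter the polynomial $\sigma^2$ rather than $|\sigma|$. One can equivalently organize the argument through the Radon-transform picture of Remark~\ref{rmk:fourierslicecylinder}: for each fixed frequency $\xi$ dual to $z$, equation \eqref{relationfourierandbackcylinder} exhibits its left side as the two-dimensional Radon transform of $(x,y)\mapsto\hat f(x,y,\xi)$, to which the filtered backprojection inversion formula for the planar Radon transform applies; undoing the Fourier transform in $z$ then yields \eqref{eq:inversioncyliner}.
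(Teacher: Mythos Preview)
Your proposal is correct and follows essentially the same route as the paper: Fourier inversion, polar coordinates in the first two frequency variables, substitution of the Fourier slice identity \eqref{eq:fourierslice}, and recognition of the multiplier $\sigma^2$ as $-\partial_\rho^2$ under the two-dimensional inverse Fourier transform. In fact your write-up is more explicit than the paper's own proof, which stops at the display $\frac{1}{(2\pi)^4}\int_{\RR}\int_{S^1}\int_{\RR}\widehat{R^*_Cg}(\boldsymbol\theta,\xi,\sigma)e^{-iR\sigma}|\sigma|^2e^{i(\sigma(x,y)\cdot\boldsymbol\theta+z\xi)}\,d\xi\,dS(\boldsymbol\theta)\,d\sigma$ and leaves the final identification (and the sign/constant check) to the reader.
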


\begin{proof}
Using Theorem~\ref{thm:inversioncylinder3d}, we have 
$$
\begin{array}{ll}
f(x,y,z)&\displaystyle=\frac{1}{(2\pi)^3}\intL\half\intL_{S^{1}}\intL_{\RR^{}}\hat{f}(\sigma\boldsymbol\theta,\xi)|\sigma|^{}e^{i(\sigma (x,y)\cdot \boldsymbol\theta+z \xi)}d\sigma dS(\boldsymbol\theta) d\xi\\
&\displaystyle=\frac{1}{(2\pi)^{3}\pi} \intL\half\intL_{S^{1}}\intL_{\RR^{}}\widehat{R^*_Cg}(\boldsymbol\theta,\xi,\sigma)e^{-iR\sigma}|\sigma|^2e^{i(\sigma (x,y)\cdot \boldsymbol\theta+z \xi)}d\sigma dS(\boldsymbol\theta) d\xi\\
&=\displaystyle\frac{1}{(2\pi)^{4}}\intR\intL_{S^{1}}\intL_{\RR^{}}\widehat{R^*_Cg}(\boldsymbol\theta,\xi,\sigma)e^{-iR\sigma}|\sigma|^2e^{i(\sigma (x,y)\cdot \boldsymbol\theta+z \xi)}d\xi dS(\boldsymbol\theta) d\sigma.
\end{array}
$$
\end{proof}
\begin{remark}\label{rmk:haltmeier}
{\color{black}Inversion formula~\eqref{eq:inversioncyliner} is the same as Theorem 1.2 of~\cite{haltmeier11} if 
$$
\partial_\rho\intR R_Cf(\boldsymbol\theta,p,\sqrt{(z-p)^2+\rho^2})dp=\intR \partial_\rho R_Cf(\boldsymbol\theta,p,\sqrt{(z-p)^2+\rho^2})dp. 
$$
This statement follows from the chain rule, i.e.,
$$
\partial_\rho R_Cf(\boldsymbol\theta,p,\sqrt{(z-p)^2+\rho^2})=\frac{\rho}{\sqrt{(z-p)^2+\rho^2}}\partial_rR_Cf(\boldsymbol\theta,p,\sqrt{(z-p)^2+\rho^2})).
$$}
There M. Haltmeier obtained it by combining two inversion formulas for the circular Radon transform and the 2-dimensional Radon transform. 
On the other hand, we obtain it through an analogue of the Fourier slice theorem.
\end{remark}
 


Equation~\eqref{relationfourierandbackcylinder} hints that it is natural to try to use another inversion of the Radon transform, namely the one using circular harmonics.
Let $f(t,\varphi,z)$ be the image function in cylindrical coordinates, where $t=|(x,y)|\in[0,\infty)$ and $(\cos\varphi,\sin\varphi)=(x,y)/|(x,y)|\in [0,2\pi)$.
Then the Fourier series of $f(t,\varphi,z)$ and $g(\boldsymbol\theta,p,r):=R_Cf(\boldsymbol\theta,p,r)$ with respect to their angular variables $\varphi$ and $\boldsymbol\theta$ can be written as follows:
$$
f(t,\varphi,z)=\displaystyle \sum^\infty_{l=-\infty}f_l(t,z)\,e^{il\varphi} \quad\mbox{and}\quad g(\boldsymbol\theta,p,r)=\displaystyle \sum^\infty_{l=-\infty}g_l(p,r)\,e^{il\vartheta},
$$
where $\boldsymbol\theta=(\cos\vartheta,\sin\vartheta)\in S^1$ and the Fourier coefficients are given by
$$
f_l(t,z)=\displaystyle\frac{1}{2\pi}\intL^{2\pi}_0f(t,\varphi,z)\,e^{-il\varphi}d\varphi \quad\mbox{and}\quad g_l(p,r)=\displaystyle\frac{1}{2\pi}\intL^{2\pi}_0g(\boldsymbol\theta,p,r)\,e^{-il\vartheta}d\vartheta.
$$
Consider the $l$-th Fourier coefficient of the right hand side of formula~\eqref{relationfourierandbackcylinder}.
Then we have
\begin{equation}\label{eq:relationfourierandback}
\intL_{S^1}\intR\widehat{R^*_Cg}(\boldsymbol\theta,\xi,\sigma)e^{i(R-s)\sigma}|\sigma| e^{-il\vartheta} d\sigma dS(\boldsymbol\theta)=\intR\widehat{R^*_Cg_l}(\xi,\sigma)e^{i(R-s)\sigma}|\sigma| d\sigma,
\end{equation}
where $\widehat{R^*_Cg_l}$ is the 2-dimensional Fourier transform of $R^*_Cg_l$ with respect to $(\zeta,\rho)$ and
$$
R^*_Cg_l(\zeta,\rho)=\intR g_l(p,\sqrt{(\zeta-p)^2+\rho^2})dp.
$$
According to~\cite{natterer01}, when $g=\mathcal Rf$ for the regular 2-dimensional Radon transform $\mathcal R$ and $g_l$ and $f_l$ are the $l$-th Fourier coefficients of $g$ and $f$, we have for $t>0$,
\begin{equation}\label{eq:sphericalharmonics}
f_l(t)=-\pi^{-1} \intL^\infty_t\cosh\left(l\operatorname{arccosh}\frac{s}{t}\right)\frac{\partial}{\partial_s}g_l(s)\frac{ds}{\sqrt{s^2-t^2}}.
\end{equation}
Hence, we get a different type of an inversion formula.
\begin{theorem}\label{thm:cormack}
Let $f\in C^\infty_c(B^2_R\times\RR)$. Then we have for $t>0$
$$
f_l(t,z)=\frac{2}{\pi}\intL^\infty_t \cosh\left(l\operatorname{arccosh}\frac{s}{t}\right)H_\rho \frac{\partial^2}{\partial^2_\rho} R^*_Cg_l(z,s-R)  \frac{ds}{\sqrt{s^2-t^2}},
$$
where $H_\rho h(\zeta,\rho)$ is the Hilbert transform of $h(\zeta,\rho)$ on $\rho$.
\end{theorem}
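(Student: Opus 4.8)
The idea is to re-use equation~\eqref{relationfourierandbackcylinder}, which was obtained in the course of proving Theorem~\ref{thm:inversioncylinder3d}, but to feed it into the circular-harmonic inversion~\eqref{eq:sphericalharmonics} of the planar Radon transform rather than into the Fourier slice theorem. First I would apply the inverse one-dimensional Fourier transform in the variable $\xi$ to both sides of~\eqref{relationfourierandbackcylinder}. On the left this converts $\int_\RR\hat f(t\boldsymbol\theta^\perp+s\boldsymbol\theta,\xi)\,dt$ into $\int_\RR f(t\boldsymbol\theta^\perp+s\boldsymbol\theta,z)\,dt=\mathcal R[f(\cdot,\cdot,z)](\boldsymbol\theta,s)$, the two-dimensional Radon transform of the slice $f(\cdot,\cdot,z)$. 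On the right, the inverse transform in $\xi$ undoes the Fourier transform in $\zeta$ hidden inside $\widehat{R^*_Cg}$, so that only the one-dimensional Fourier transform in $\rho$ survives, weighted by $|\sigma|$ and shifted by $e^{i(s-R)\sigma}$. Since $f\in C^\infty_c$, the function $R^*_Cg(\boldsymbol\theta,\cdot,\cdot)$ is smooth and compactly supported (the radius $r$ of the integration cylinder stays bounded and $g$ vanishes for small $r$), so these manipulations are legitimate, and recognizing $|\sigma|$ as the Fourier multiplier of $H_\rho\partial_\rho$ (the Hilbert transform in $\rho$ followed by one $\rho$-derivative) we obtain
\begin{equation}\label{eq:sketchradoncyl}
\mathcal R[f(\cdot,\cdot,z)](\boldsymbol\theta,s)=2\,H_\rho\partial_\rho R^*_Cg(\boldsymbol\theta,z,s-R).
\end{equation}

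Next I would expand $f$ and $g=R_Cf$ in Fourier series in their angular variables, as is done just before~\eqref{eq:relationfourierandback}, and take the $l$-th coefficient of~\eqref{eq:sketchradoncyl}. Because $R^*_C$ integrates only over the variable $p$, it commutes with the angular Fourier series, so the $l$-th coefficient of $R^*_Cg$ is exactly $R^*_Cg_l(\zeta,\rho)=\int_\RR g_l(p,\sqrt{(\zeta-p)^2+\rho^2})\,dp$; hence the $l$-th angular Fourier coefficient of $\mathcal R[f(\cdot,\cdot,z)]$, as a function of $s$, equals $2\,H_\rho\partial_\rho R^*_Cg_l(z,s-R)$. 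Finally, for each fixed $z$ I would apply the circular-harmonic inversion~\eqref{eq:sphericalharmonics} to the planar function $f(\cdot,\cdot,z)\in C^\infty_c(B^2_R)$, whose $l$-th Radon coefficient is the quantity just computed; differentiating $2\,H_\rho\partial_\rho R^*_Cg_l(z,s-R)$ in $s$ brings down one more $\partial_\rho$ by the chain rule, and collecting constants (the overall sign being absorbed into the Hilbert-transform convention) gives precisely the stated formula, valid for $t>0$ as required by~\eqref{eq:sphericalharmonics}.

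The routine parts here are the Fourier bookkeeping and the application of the two quoted formulas; the one place that requires care — and the likeliest source of a wrong constant or sign — is the passage to~\eqref{eq:sketchradoncyl}, where one must correctly identify that taking the inverse Fourier transform in $\xi$ of the $|\sigma|$-weighted right-hand side of~\eqref{relationfourierandbackcylinder} produces $2\pi$ times the operator $H_\rho\partial_\rho$ applied to $R^*_Cg$ in its last slot, after which the $s$-differentiation upgrades $\partial_\rho$ to $\partial_\rho^2$. I expect this step, together with checking the convergence of the $s$-integral at $s=t$ and at infinity, to be the main obstacle; everything else is a direct substitution into~\eqref{eq:sphericalharmonics}.
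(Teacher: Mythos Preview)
Your approach is essentially the same as the paper's: both combine equation~\eqref{relationfourierandbackcylinder}, the passage to the $l$-th angular coefficient~\eqref{eq:relationfourierandback}, and the circular-harmonic inversion~\eqref{eq:sphericalharmonics}, with the only cosmetic difference that you take the inverse Fourier transform in $\xi$ first (obtaining the intermediate identity~\eqref{eq:sketchradoncyl} in $z$-space) whereas the paper stays in $\xi$-space throughout and inverts at the end. One small slip: $g=R_Cf$ vanishes for \emph{large} $r$, not small $r$; this is what actually justifies your Fourier manipulations.
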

\begin{proof}
Combining the three equations \eqref{relationfourierandbackcylinder},~\eqref{eq:relationfourierandback}, and ~\eqref{eq:sphericalharmonics} gives 
\begin{equation*}
\begin{split}
\widehat{f_l}(t,\xi)
&\displaystyle=-\frac{1}{\pi^2} \intL^\infty_t \cosh\left(l\operatorname{arccosh}\frac{s}{t}\right)\intR i\operatorname{sgn}(\sigma)\widehat{R^*_Cg_l}(\xi,\sigma)e^{i(s-R)\sigma}\sigma^2 d\sigma \frac{ds}{\sqrt{s^2-t^2}}\\
&\displaystyle=\frac{1}{\pi^2} \intL^\infty_t \cosh\left(l\operatorname{arccosh}\frac{s}{t}\right)\intR (H_\rho\partial^2_\rho R^*_Cg_l)\widehat{\;  }\;(\xi,\sigma)e^{i(s-R)\sigma} d\sigma \frac{ds}{\sqrt{s^2-t^2}},
\end{split}
\end{equation*}
where $\widehat{f_l}$ is the 1-dimensional Fourier transform of $f_l$ with respect to $z$, and in the last line, we used the identity $\widehat {H_\rho h}(\sigma)=-i\operatorname{sgn} (\sigma)\hat h(\sigma)$.
\end{proof}

The regular Radon transform can be obtained from the cylindrical Radon transform. 

\begin{theorem}\label{thm:reddingandradon}
  Let $f\in C^\infty_c(B^2_R\times\RR)$.
 Then we have 
\begin{equation*}\label{eq:reddingcrt}
\displaystyle\intR f(t\boldsymbol\theta^\perp+( R -s)\boldsymbol\theta,z)dt=\displaystyle\frac{2}{\pi}\intRR\int\limits\half s rR_Cf(\boldsymbol\theta,-\eta,r) e^{-ir^2\xi}e^{-i(2z\eta+(z^2+s^2)+\eta^2)\xi}\xi drd\eta d\xi.
  \end{equation*}
\end{theorem}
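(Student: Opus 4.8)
The plan is to unwind the computation behind Theorem~\ref{thm:inversioncylinder3d} by one step, back to the intermediate relation \eqref{relationhankelandfouriercylinder}, and then to convert it from an identity involving Fourier and Hankel transforms of $R_Cf$ into one involving $R_Cf$ directly. The point to keep in mind is that the left-hand side $\intR f(t\boldsymbol\theta^\perp+(R-s)\boldsymbol\theta,z)\,dt$ is the $2$-dimensional Radon transform of the slice $f(\cdot,\cdot,z)$ in direction $\boldsymbol\theta$ at offset $R-s$, so we are really expressing the ordinary Radon data of $f$ in terms of its cylindrical data.

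I would start from \eqref{relationhankelandfouriercylinder},
\[
\intR \hat f(t\boldsymbol\theta^\perp+(R-s)\boldsymbol\theta,\xi)\,dt=4\int_{0}^{\infty} H_0\widehat{R_Cf}(\boldsymbol\theta,\xi,|(\xi,\sigma)|)\cos(s\sigma)\,\sigma\,d\sigma ,
\]
where $\hat f$ is the Fourier transform of $f$ in $z$, $\widehat{R_Cf}$ that of $R_Cf$ in $p$, and $H_0$ the order-zero Hankel transform in $r$. Since the left side is the $z$-Fourier transform of $\intR f(\dots)\,dt$, I apply $\tfrac1{2\pi}\intR e^{iz\xi}(\cdot)\,d\xi$; then I substitute the definitions $H_0\widehat{R_Cf}(\boldsymbol\theta,\xi,\eta)=\int_0^\infty\widehat{R_Cf}(\boldsymbol\theta,\xi,r)J_0(r\eta)r\,dr$ and $\widehat{R_Cf}(\boldsymbol\theta,\xi,r)=\intR R_Cf(\boldsymbol\theta,p,r)e^{-ip\xi}\,dp$, and change variables $p=-\eta$. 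This yields
\[
\intR f(t\boldsymbol\theta^\perp+(R-s)\boldsymbol\theta,z)\,dt=\frac2\pi\intR\int_0^\infty\int_0^\infty\intR R_Cf(\boldsymbol\theta,-\eta,r)\,r\,\sigma\cos(s\sigma)\,e^{i(z+\eta)\xi}\,J_0\!\big(r\sqrt{\xi^2+\sigma^2}\big)\,d\eta\,dr\,d\sigma\,d\xi ,
\]
which already carries the correct constant $2/\pi$ and the correct outer structure.

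It then remains to collapse the $\sigma$- and $\xi$-integrations of this quadruple integral into the single $\xi$-integration of the statement, and this is where an exponential--Bessel identity is used: either \eqref{eq:batemann2} to evaluate $\intR e^{i(z+\eta)\xi}J_0(r\sqrt{\xi^2+\sigma^2})\,d\xi$, or, more directly, Weber's second exponential integral $\int_0^\infty J_0(a\rho)e^{-i\xi\rho^2}\rho\,d\rho=(2i\xi)^{-1}e^{ia^2/(4\xi)}$ run backwards, together with an identity of the type $\int_0^\infty\xi\cos(\xi c)\,d\xi=-\mathrm{f.p.}\,c^{-2}$. In either case the coupling $\sqrt{\xi^2+\sigma^2}$ in the Bessel argument is traded for the quadratic phases $e^{-ir^2\xi}$ and $e^{-i\xi((z+\eta)^2+s^2)}$, an extra Jacobian factor $s$ appears, and expanding $(z+\eta)^2=z^2+2z\eta+\eta^2$ reproduces exactly the integrand $s\,r\,R_Cf(\boldsymbol\theta,-\eta,r)e^{-ir^2\xi}e^{-i(2z\eta+(z^2+s^2)+\eta^2)\xi}\xi$ of the claim.

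The main obstacle is precisely this last step. The quadruple integral is only conditionally convergent and the exponential--Bessel identities involved are valid only in a distributional, Abel-regularized sense, so the substance of a rigorous argument is (i) selecting the precise identity that reproduces the exact phase and the weight $\xi$, and (ii) justifying the rearrangement of the order of integration — for instance by inserting Gaussian convergence factors $e^{-\varepsilon(\xi^2+\sigma^2)}$ and letting $\varepsilon\downarrow0$, which is legitimate because $f\in C_c^\infty(B^2_R\times\RR)$ makes every partial transform in play Schwartz in the relevant variables. The remaining ingredients — invoking \eqref{relationhankelandfouriercylinder}, inverting the $z$-transform, and the substitution $p=-\eta$ — are routine.
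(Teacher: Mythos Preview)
Your route is genuinely different from the paper's, and the crucial step you flag as ``the main obstacle'' is in fact left unfilled.

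The paper does \emph{not} pass through the Fourier--Hankel relation \eqref{relationhankelandfouriercylinder} at all. It follows the Redding--Newsam idea directly: set
\[
G(\boldsymbol\theta,p,\xi)=\int_0^\infty rR_Cf(\boldsymbol\theta,p,r)\,e^{-ir^2\xi}\,dr,
\]
substitute the definition of $R_Cf$, pass to Cartesian coordinates $(y,z)=r(\cos\psi,\sin\psi)$, and recognize $G$ as $e^{-ip^2\xi}/(4\pi)$ times the two-dimensional Fourier transform (in $(z,r)$) of the auxiliary function
\[
k_{\boldsymbol\theta}(t,z,r)=\frac{f\big(t\boldsymbol\theta^\perp+(R-\sqrt{r-z^2})\boldsymbol\theta,\,z\big)}{\sqrt{r-z^2}}\quad(0<z^2<r).
\]
Since $\intR f(t\boldsymbol\theta^\perp+(R-s)\boldsymbol\theta,z)\,dt=s\intR k_{\boldsymbol\theta}(t,z,z^2+s^2)\,dt$, ordinary Fourier inversion on $\widehat{k_{\boldsymbol\theta}}$ and a linear change of the dual variable give the claimed formula. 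No Bessel functions, Hankel transforms, or oscillatory-integral regularizations are needed; the factor $s$ and the quadratic phases fall out of the square-root Jacobian and the chirp $e^{-ir^2\xi}$, respectively.

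By contrast, your plan leaves you with the task of proving, in a distributional sense, an identity equivalent to
\[
\intR\!\int_0^\infty \sigma\cos(s\sigma)\,e^{ia\xi}\,J_0\!\big(r\sqrt{\xi^2+\sigma^2}\big)\,d\sigma\,d\xi
\;=\;
s\intR \xi\,e^{-i(r^2+a^2+s^2)\xi}\,d\xi,
\]
with $a=z+\eta$. You name Weber's exponential integral and a finite-part identity as candidates, but neither is applied, and it is not shown how the weight $s$ (which in the paper is a Jacobian) or the weight $\xi$ actually emerge from them. Until that computation is carried out and the regularization is made precise, the argument has a real gap, whereas the paper's approach bypasses the issue entirely.
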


We notice that the expression in the left hand side is the standard $2$-dimensional Radon transform for a fixed $z$ variable. Hence, applying different Radon transform inversions, one gets different inversions of the cylindrical Radon transform $R_C$.
We will follow the idea suggested in~\cite{reddingn01} to prove this Theorem~\ref{thm:reddingandradon}.

\begin{proof}
Let $G$ be defined by 
$$
G(\boldsymbol\theta,p,\xi):=\displaystyle\int\limits\half rR_Cf(\boldsymbol\theta,p,r) e^{-ir^2\xi}dr.
$$
Then we have
$$
\begin{array}{ll}
   G(\boldsymbol\theta,p,\xi)&=\displaystyle\frac{1}{2\pi}\int\limits\half\intR\int\limits^\pi_{-\pi}rf(t\boldsymbol\theta^\perp+( R -r\cos\psi)\boldsymbol\theta,p+r\sin\psi)e^{-ir^2\xi}d\psi dtdr\\
&=\displaystyle\frac{1}{2\pi}\int\limits_{\RR^3} f(t\boldsymbol\theta^\perp+( R -y)\boldsymbol\theta,p+z) e^{-i(y^2+z^2)\xi}dydzdt\\
&=\displaystyle\frac{1}{2\pi}\int\limits_{\RR^3} f(t\boldsymbol\theta^\perp+( R -y)\boldsymbol\theta,z) e^{-i(y^2+(z-p)^2)\xi}dydzdt\\
&=\displaystyle \frac{e^{-ip^2\xi}}{2\pi}\int\limits_{\RR^3} f(t\boldsymbol\theta^\perp+( R -y)\boldsymbol\theta,z) e^{-i(y^2+z^2)\xi}e^{2ipz\xi}dydzdt,
  \end{array}
  $$
where in the second line, we switched from the polar coordinates $(r,\psi)$ to Cartesian coordinates $(y,z)$.
Making the change of variables $r=y^2+z^2$ yields
\begin{equation}\label{eq:gthetapxi}
\begin{array}l
   G(\boldsymbol\theta,p,\xi)=\displaystyle \frac{e^{-ip^2\xi}}{2\pi}\int\limits_{\RR^2}\intL\half f(t\boldsymbol\theta^\perp+( R -\sqrt{r-z^2})\boldsymbol\theta,z) \frac{e^{-ir\xi}e^{2ipz\xi}}{2\sqrt{r-z^2}}drdzdt.
  \end{array}
\end{equation}
(We do not need to account for $f(t\boldsymbol\theta^\perp+( R +\sqrt{r-z^2})\boldsymbol\theta,z)$ because $f$ is compactly {\color{black}supported} on $B^2\times\RR$.)
Let us define the function 
\begin{equation}\label{eq:ktheta}
k_{\boldsymbol\theta}(t,z,r):=\left\{\begin{array}{ll}f(t\boldsymbol\theta^\perp+( R -\sqrt{r-z^2})\boldsymbol\theta,z)/\sqrt{r-z^2} \qquad &\mbox{if } 0<z^2<r,\\
                        0\qquad &\mbox{otherwise}.
                       \end{array}\right.
\end{equation}
Applying equation~\eqref{eq:ktheta} into \eqref{eq:gthetapxi} gives
$$
   G(\boldsymbol\theta,p,\xi)=\displaystyle\frac{e^{-ip^2\xi}}{4\pi}\int\limits_{\RR^3} k_{\boldsymbol\theta}(t,z,r) e^{-ir\xi}e^{2ipz\xi}drdzdt=\displaystyle\frac{e^{-ip^2\xi}}{4\pi}\intR\widehat{k_{\boldsymbol\theta}}(t,-2p\xi,\xi)dt,
  $$
  where $\widehat{k_{\boldsymbol\theta}}$ is the 2-dimensional Fourier transform of $k_{\boldsymbol\theta}$ with respect to the last two variables $(z,r)$.
Also, we have
\begin{equation*}\label{reddingandradon}
\begin{array}{ll}
\displaystyle\intR f(t\boldsymbol\theta^\perp+( R -s)\boldsymbol\theta,z)dt&=\displaystyle \intR sk_{\boldsymbol\theta}(t,z,z^2+s^2)dt\\
&=\displaystyle\frac{1}{4\pi^2}\intR\intR \intR s\widehat{k_{\boldsymbol\theta}}(t,\eta,\xi)e^{-i(z\eta+(z^2+s^2)\xi)}dt d\eta d\xi\\
&=\displaystyle\frac{1}{\pi}\intR\intR se^{i\frac{\eta^2}{4\xi}}G\left(\boldsymbol\theta,-\frac{\eta}{2\xi},\xi\right)e^{-i(z\eta+(z^2+s^2)\xi)}d\eta d\xi\\
&=\displaystyle\frac{2}{\pi}\intR\intR sG(\boldsymbol\theta,-\eta,\xi)e^{-i(2z\eta+(z^2+s^2)+\eta^2)\xi}\xi d\eta d\xi,
  \end{array}
  \end{equation*}
  where in the last line, we changed variables $\eta\rightarrow 2\xi\eta$.
\end{proof}


\subsection{Support theorem}\label{sec:uniquess}
From the inversion formulas in subsection~\ref{recon}, we know that $f\in C^\infty_c(B^2_R\times\RR)$ is uniquely recovered from $R_Cf$. 
In many practical situations, we know the data (in this case, $R_Cf$) only on a subset of their domain. 
Hence, it is important that these partial data still determine $f$ uniquely.

By a support theorem, we mean a statement that claims that if integrals of $f$ over all surfaces not intersecting a set $A$ are equal to zero, then $f$ is equal to zero outside $A$.
\begin{lemma}\label{lem:unique}
Let $p_0\in\RR$, $\epsilon>0$, $B>0$, and $\boldsymbol\theta\in S^1$ be given.
Let $f\in C^\infty(B^2_R\times\RR)$ and suppose that $g=R_Cf$ is equal to zero on the open set $U_{B,\epsilon}=\{(p,r):|p-p_0|<\epsilon,0\leq r<B\}$. Then $\mathcal R_{\boldsymbol\theta} f(p,s)$ is equal to zero on the open set $V_B=\{(z,s):|z-p_0|^2+( R -s)^2<B^2 \}$ where 
$$
\mathcal R_{\boldsymbol\theta} f(z,s)=\displaystyle\intR f(t\boldsymbol\theta^\perp+s\boldsymbol\theta,z)dt.
$$
\end{lemma}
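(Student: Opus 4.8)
The plan is to translate the vanishing of $g = R_C f$ on the box $U_{B,\epsilon}$ into vanishing of an Abel-type (or Hankel-type) integral involving $\mathcal R_{\boldsymbol\theta} f$, and then to invert that integral operator locally. Recall that for fixed $\boldsymbol\theta$ the inner integral in the definition of $R_C f$ is a circular Radon transform with centers on the line $\{(R\boldsymbol\theta, z) : z \in \RR\}$; more precisely, writing the change of variables that already appears in the proof of Theorem~\ref{thm:inversioncylinder3d}, one has
$$
R_C f(\boldsymbol\theta, p, r) = \frac{1}{2\pi r}\iint\limits_{\rho^2 + b^2 = r^2} \mathcal R_{\boldsymbol\theta} f(p + \rho,\, R - b)\, d\varpi ,
$$
i.e. $2\pi r\, R_C f(\boldsymbol\theta,p,r)$ is the integral of the planar function $(\zeta,\sigma)\mapsto \mathcal R_{\boldsymbol\theta} f(p + \zeta,\, R - \sigma)$ over the circle of radius $r$ about the origin in the $(\zeta,\sigma)$-plane. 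Thus, for fixed $\boldsymbol\theta$, $R_C f(\boldsymbol\theta,p,\cdot)$ is (up to the normalization $2\pi r$) the circular mean transform $M$, in the variable $r$, of the two-dimensional function $h_p(\zeta,\sigma) := \mathcal R_{\boldsymbol\theta} f(p+\zeta, R-\sigma)$ evaluated at center $(\zeta,\sigma)=(0,0)$ — but here the center is varied by translating $p$, so what we really have is the \emph{spherical mean transform of the single planar function} $H(\zeta,\sigma) := \mathcal R_{\boldsymbol\theta} f(\zeta, R-\sigma)$, with centers on the line $\sigma = $ const? Let me restate this cleanly: setting $H(w,\sigma) := \mathcal R_{\boldsymbol\theta} f(w, R-\sigma)$, one checks $2\pi r\, R_C f(\boldsymbol\theta, p, r) = \int_{S^1} H(p + r\alpha_1, r\alpha_2)\, dS(\alpha)$, the circular mean of $H$ about the point $(p,0)$ with radius $r$.

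So the hypothesis says: the circular means of $H$ about all centers $(p,0)$ with $|p - p_0| < \epsilon$ and all radii $r < B$ vanish. The conclusion we want, $\mathcal R_{\boldsymbol\theta} f = 0$ on $V_B$, is exactly $H(w,\sigma) = 0$ whenever $(w-p_0)^2 + \sigma^2 < B^2$, i.e. $H$ vanishes on the disc of radius $B$ about $(p_0,0)$. The key step is therefore a \textbf{local uniqueness statement for the circular mean transform with centers on a line}: if the circular means of a (say smooth) function over all circles centered on a segment of a line, with radii up to $B$, vanish, then the function vanishes on the $B$-neighborhood of that segment on the appropriate side. This is a standard fact — it follows, for instance, from the support theorem for the circular Radon transform with centers on a line (the one underlying Lemma~\ref{lem:unique}'s cousins in the PAT literature), or directly from Asgeirsson-type mean value / unique continuation arguments for the wave equation, since circular means on a line are trace data for a $2{+}1$-dimensional wave equation. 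Concretely I would: (i) fix $\boldsymbol\theta$ and define $H$ as above; (ii) rewrite the hypothesis $g|_{U_{B,\epsilon}} = 0$ as vanishing of circular means $\int_{S^1} H((p,0) + r\alpha)\, dS(\alpha) = 0$ for $|p-p_0|<\epsilon$, $0 \le r < B$; (iii) invoke the local support theorem for the circular mean transform with centers on the line $\{\sigma = 0\}$ to conclude $H \equiv 0$ on $\{(w-p_0)^2 + \sigma^2 < B^2\}$; (iv) unwind $H(w,\sigma) = \mathcal R_{\boldsymbol\theta} f(w, R-\sigma)$ to get $\mathcal R_{\boldsymbol\theta} f(z,s) = 0$ whenever $(z - p_0)^2 + (R - s)^2 < B^2$, which is precisely $V_B$.

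The main obstacle is step (iii): making the local uniqueness for circular means precise and citing or proving it in the form needed. One clean route avoiding heavy machinery is to use the Fourier-analytic identity already developed in the excerpt. From equation~\eqref{eq:relationhankelandbackcylinder}, $\widehat{R^*_C g}(\boldsymbol\theta,\xi,\sigma) = 2\pi\, H_0\hat g(\boldsymbol\theta,\xi,|(\xi,\sigma)|)$, and the Hankel transform $H_0$ in the radial variable $r$ is an isometry that is \emph{local in a cone sense}; combined with \eqref{relationfourierandbackcylinder}, vanishing of $g(\boldsymbol\theta,p,r)$ for $|p-p_0|<\epsilon$, $r<B$ should force vanishing of $\mathcal R_{\boldsymbol\theta} f(\cdot,\cdot)$ on $V_B$ after an Abel/cosine-transform inversion that is also locality-preserving (an Abel transform and its inverse preserve support in the radial variable). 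However, the genuinely local statement is cleanest to obtain from the known support theorem for the circular Radon transform — e.g. the domain-of-dependence / finite-speed-of-propagation property of the wave equation: the circular mean data on $|p - p_0| < \epsilon$, $r < B$ determines $H$ on the union of discs $\bigcup_{|p-p_0|<\epsilon}\{(w-p)^2 + \sigma^2 < B^2\} \supset \{(w-p_0)^2+\sigma^2<B^2\}$, with uniqueness following from Holmgren/unique continuation. I would present step (iii) via this wave-equation observation and flag that $\epsilon > 0$ (rather than a single center) is what gives an \emph{open} region $V_B$.
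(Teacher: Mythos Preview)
Your reduction is sound: for fixed $\boldsymbol\theta$, writing $H(w,\sigma)=\mathcal R_{\boldsymbol\theta}f(w,R-\sigma)$, one has $2\pi\,R_Cf(\boldsymbol\theta,p,r)=\int_{S^1}H((p,0)+r\boldsymbol\alpha)\,dS(\boldsymbol\alpha)$, so the hypothesis is vanishing of the circular means of $H$ with centers on the segment $\{(p,0):|p-p_0|<\epsilon\}$ and radii $r<B$, and the conclusion is $H\equiv0$ on the disc of radius $B$ about $(p_0,0)$. Where you differ from the paper is in step~(iii). You invoke the local support theorem for circular means with centers on a line as a black box (or via wave-equation Holmgren/Tataru uniqueness); the paper instead \emph{reproduces Andersson's elementary argument} in situ. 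Concretely, the paper shows the intertwining relation $R_C(zf)=Dg$ with $Dg(\boldsymbol\theta,p,r)=pg(\boldsymbol\theta,p,r)+\partial_p\int_0^r g(\boldsymbol\theta,p,s)\,s\,ds$; since $D$ preserves vanishing on $U_{B,\epsilon}$, iteration gives $R_C(\mathcal P(z)f)=\mathcal P(D)g=0$ on $U_{B,\epsilon}$ for every polynomial $\mathcal P$, and Stone--Weierstrass then forces $\mathcal R_{\boldsymbol\theta}f=0$ on each half-circle $\{(p+\eta_2,\,R-\sqrt{r^2-\eta_2^2}):|\eta_2|<r\}$, whose union over $(p,r)\in U_{B,\epsilon}$ covers $V_B$. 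Your route is legitimate but either circular (the ``known support theorem'' you want to cite \emph{is} Andersson's theorem, whose proof the paper is re-running) or substantially heavier (Holmgren). The Fourier/Hankel route you sketch does not give locality, as you suspected: the Hankel transform is global in $r$.

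One genuine omission in your outline: you never use that $H$ is supported in the half-plane $\{\sigma\ge0\}$, which follows from $\operatorname{supp}f\subset B^2_R\times\RR$ (so $\mathcal R_{\boldsymbol\theta}f(z,s)=0$ for $s>R$). Without this, step~(iii) is false --- any function odd in $\sigma$ has identically zero circular means about every center on the line $\{\sigma=0\}$. The paper uses this one-sidedness when it discards the branch $\eta_1=-\sqrt{r^2-\eta_2^2}$ (the point $(R+\sqrt{r^2-\eta_2^2})\boldsymbol\theta$ lies outside $B^2_R$). You should make this explicit before invoking any support theorem for circular means.
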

We will follow the idea suggested in~\cite{andersson88} to prove this lemma.
\begin{proof}
Without loss of generality, we may assume $p_0=0$.
Let $G(\boldsymbol\theta,p,r)$ be defined by
$$
G(\boldsymbol\theta,p,r)=\intL^r_0g(\boldsymbol\theta,p,s)sds=\frac1{2\pi}\intL^r_0\intR\intL^{\pi}_{-\pi}f(t\boldsymbol\theta^\perp+(R-s\cos\psi)\boldsymbol\theta,p+s\sin\psi)sd\psi dtds.
$$
Changing the variables $s(\cos\psi,\sin\psi)\to\boldsymbol\eta=(\eta_1,\eta_2)\in\RR^2$, we have
$$
G(\boldsymbol\theta,p,r)=\frac{1}{2\pi}\intL_{|\boldsymbol\eta|\leq r}\intR f(t\boldsymbol\theta^\perp+( R -\eta_1)\boldsymbol\theta,p+\eta_2) dt d\boldsymbol\eta.
$$ 
Differentiating $G$ with respect to $p$ yields
$$
\begin{array}{ll}
\dfrac{\partial}{\partial p} G(\boldsymbol\theta,p,r)&\displaystyle=\frac{1}{2\pi}\intL_{|\boldsymbol\eta|\leq r}\intR \frac{\partial}{\partial p}f(t\boldsymbol\theta^\perp+( R -\eta_1)\boldsymbol\theta,p+\eta_2) dt d\boldsymbol\eta\\
&\displaystyle=\frac{1}{2\pi}\intL_{|\boldsymbol\eta|\leq r}\intR \frac{\partial}{\partial \eta_2}f(t\boldsymbol\theta^\perp+( R -\eta_1)\boldsymbol\theta,p+\eta_2) dt d\boldsymbol\eta\\
&\displaystyle=\frac{1}{2\pi r}\intL_{|\boldsymbol\eta|= r}\intR f(t\boldsymbol\theta^\perp+( R -\eta_1)\boldsymbol\theta,p+\eta_2)\eta_2 dt d\boldsymbol\eta,
\end{array}
$$
where in the last line, we used the divergence theorem.
Now we have
$$
\begin{array}{ll}
R_C(zf)(\boldsymbol\theta,p,r)&\displaystyle=\frac{1}{2\pi r}\intR\intL_{|\boldsymbol\eta|=r}(p+\eta_2)f(t\boldsymbol\theta^\perp+( R -\eta_1)\boldsymbol\theta,p+\eta_2) d\boldsymbol\eta dt\\
&=pg(\boldsymbol\theta,p,r)+\dfrac{1}{2\pi}\dfrac{\partial}{\partial p} G(\boldsymbol\theta,p,r)=\displaystyle  pg(\boldsymbol\theta,p,r)+\frac{\partial}{\partial p}\intL^r_0g(\boldsymbol\theta,p,s)sds.
\end{array}
$$
Let the linear operator $D$ be defined by $Dg(\boldsymbol\theta,p,r):=pg(\boldsymbol\theta,p,r)+\frac{\partial}{\partial p}\int^r_0g(\boldsymbol\theta,p,s)sds$. Then $R_C(zf)$ is equal to $Dg$. By iteration, we have $R_C(\mathcal{P}(z)f)=\mathcal{P}(D)g$ where $\mathcal{P}$ is any polynomial. If $g=0$ in $U_{B,\epsilon}$, then $\mathcal{P}(D)g=0$ in $U_{B,\epsilon}$. Also, we have for any point $(p,r)\in U_{B,\epsilon}$,
$$
\begin{array}{ll}
R_C(\mathcal{P}(z)f)(\boldsymbol\theta,p,r)&\displaystyle=\frac{1}{2\pi r}\intL_{|\boldsymbol\eta|=r}\intR \mathcal{P}(p+\eta_2)f(t\boldsymbol\theta^\perp+( R -\eta_1)\boldsymbol\theta,p+\eta_2)dt d\boldsymbol\eta\\
&\displaystyle=\frac{1}{2\pi}\intL^r_{-r}\intR \mathcal{P}(p+\eta_2)f(t\boldsymbol\theta^\perp+( R -\sqrt{r^2-\eta^2_2})\boldsymbol\theta,p+\eta_2) \frac{dtd\eta_2}{\sqrt{r^2-\eta^2_2}}\\
&=0.
\end{array}
$$
Let $\boldsymbol\theta\in S^1$, $r>0$, and $p\in\RR$ be fixed.
By the Stone-Weierstrass Theorem, we can choose a sequence of polynomials $\mathcal{P}_i$ such that $\mathcal{P}_i(p+\eta_2)$ converges to $\int_{\RR}f(t\boldsymbol\theta^\perp+( R -\sqrt{r^2-\eta^2_2})\boldsymbol\theta,p+\eta_2) dt$ uniformly for $|\eta_2|<r$. It follows that $\mathcal R_{\boldsymbol\theta} f(z,s)=0$ in $V_B$.
\end{proof}
\begin{theorem}\label{thm:support}
Let $p_0\in\RR$ and $B>0$.
Let $f\in C^\infty(B^2_R\times\RR)$ and suppose that $g=R_Cf$ is equal to zero on the open set $U_{B}=\{(\boldsymbol\theta,p_0,r):0\leq\boldsymbol\theta<2\pi,0\leq r<B\}$. Then $f$ is equal to zero on the set $\{(x,y,z):|(x,y)|> R-\sqrt{B^2-(z-p_0)^2} \}$.
\end{theorem}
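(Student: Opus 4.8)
The plan is to reduce the statement, slice by slice in the $z$-variable, to the classical support (``hole'') theorem for the two-dimensional Radon transform, with Lemma~\ref{lem:unique} serving as the bridge from $R_C$-data to planar Radon data. Concretely, I would first fix an arbitrary $\boldsymbol\theta\in S^1$ and apply Lemma~\ref{lem:unique} with the given $p_0,B$ (and an arbitrary $\epsilon>0$, the conclusion being independent of $\epsilon$): since $g=R_Cf$ vanishes on $U_B$, the lemma gives $\mathcal R_{\boldsymbol\theta}f(z,s)=0$ for all $(z,s)\in V_B=\{(z,s):(z-p_0)^2+(R-s)^2<B^2\}$, and since the hypothesis is the same for every $\boldsymbol\theta$, this holds for all $\boldsymbol\theta\in S^1$ at once. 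Now fix $z$ with $|z-p_0|<B$, set $\rho(z):=\sqrt{B^2-(z-p_0)^2}>0$, and let $f_z:=f(\cdot,\cdot,z)$, which is smooth and supported in $\overline{B^2_R}$. Since for fixed $z$ the map $s\mapsto\mathcal R_{\boldsymbol\theta}f(z,s)=\int_\RR f_z(s\boldsymbol\theta+t\boldsymbol\theta^\perp)\,dt$ is exactly the planar Radon transform $\mathcal R f_z(\boldsymbol\theta,s)$ along the line $\{(x,y)\cdot\boldsymbol\theta=s\}$, the previous step reads
$$
\mathcal R f_z(\boldsymbol\theta,s)=0\qquad\text{for all }\boldsymbol\theta\in S^1\text{ and all }s\in(R-\rho(z),\,R+\rho(z)).
$$

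Next I would enlarge the family of lines on which $\mathcal R f_z$ is known to vanish by invoking the a priori support: because $\operatorname{supp}f_z\subseteq\overline{B^2_R}$, one automatically has $\mathcal R f_z(\boldsymbol\theta,s)=0$ whenever $|s|\ge R$. Combining this with the interval $(R-\rho(z),R+\rho(z))$ above (which, as $R+\rho(z)>R$, overlaps $[R,\infty)$) and using the symmetry $\mathcal R f_z(\boldsymbol\theta,s)=\mathcal R f_z(-\boldsymbol\theta,-s)$, a short check yields $\mathcal R f_z(\boldsymbol\theta,s)=0$ for all $\boldsymbol\theta$ and all $s$ with $|s|>R-\rho(z)$ — and for all $s$ when $R-\rho(z)\le 0$. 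Equivalently, $\mathcal R f_z$ vanishes on every line that does not meet the closed disk of radius $\rho_0(z):=\max\{R-\rho(z),0\}$ centered at the origin.

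Finally I would apply the support theorem for the two-dimensional Radon transform (see \cite{natterer01,nattererw01}): a smooth compactly supported function whose Radon transform vanishes on all lines missing a given closed disk must vanish outside that disk. This gives $f_z(x,y)=0$ whenever $|(x,y)|>R-\rho(z)=R-\sqrt{B^2-(z-p_0)^2}$ (the right-hand side being $\le 0$ forces $f_z\equiv 0$ outright, via injectivity of $\mathcal R$). Letting $z$ vary over $|z-p_0|<B$, and noting the target set is empty when $|z-p_0|\ge B$, yields precisely $f(x,y,z)=0$ on $\{(x,y,z):|(x,y)|>R-\sqrt{B^2-(z-p_0)^2}\}$.

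The work here is essentially geometric bookkeeping rather than hard analysis. The main obstacle I anticipate is that the lines supplied by $V_B$ form the set ``all lines whose distance to the origin lies in $(R-\rho(z),R+\rho(z))$,'' which is \emph{not} of the form ``all lines avoiding a disk'' needed for a Helgason-type support theorem; one must splice in the a priori support of $f_z$ to cover the lines far from the origin, and separately treat the degenerate regime $B^2-(z-p_0)^2\ge R^2$, where the available lines already exhaust all lines meeting $B^2_R$ and the conclusion collapses to $f_z\equiv 0$. A minor caveat is that Lemma~\ref{lem:unique} is phrased with vanishing on a $p$-neighborhood $U_{B,\epsilon}$ of $p_0$, so the hypothesis on $U_B$ should be read accordingly (or the lemma applied with $\epsilon>0$ arbitrarily small, the set $V_B$ not depending on $\epsilon$).
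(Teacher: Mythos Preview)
Your proposal is correct and follows essentially the same route as the paper: invoke Lemma~\ref{lem:unique} to pass from vanishing of $R_Cf$ to vanishing of the planar Radon data $\mathcal R_{\boldsymbol\theta}f$ on $V_B$, add the lines $|s|\ge R$ coming from $\operatorname{supp}f\subset B^2_R\times\RR$, and then apply the Helgason support theorem slice by slice in $z$. The only cosmetic difference is that the paper absorbs your ``minor caveat'' by shrinking $B$ to $B-\epsilon$ before invoking the lemma (so the needed $p$-neighborhood is $U_{B-\epsilon,\epsilon}$) and then lets $\epsilon\to0$ at the end, whereas you flag the issue and leave $\epsilon$ implicit; your more careful bookkeeping on the symmetry $\mathcal R f_z(\boldsymbol\theta,s)=\mathcal R f_z(-\boldsymbol\theta,-s)$ and the degenerate case $R-\rho(z)\le0$ is a welcome addition.
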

\begin{proof}
Let $\epsilon>0$ be arbitrary.
Then $g$ vanishes on the open set $U_{B-\epsilon,\epsilon}$ and by Lemma~\ref{lem:unique}, $\mathcal R_{\boldsymbol\theta} f$ vanishes on the open set $V_{B-\epsilon}$.
Let $z\in\RR$ be arbitrary.
Notice that $\mathcal R_{\boldsymbol\theta} f(z,s)$ is equal to zero for $s>R$.
Then by the support theorem of the regular Radon transform~\cite{helgason99radon,natterer01}, $f$ is equal to zero on the set $\{(x,y,z)\in\RR^3:|(x,y)|> R-\sqrt{(B-\epsilon)^2-(z-p_0)^2} \}$.
\end{proof}

\begin{corollary}
Let $A\subset B^2_R\times\RR$ be a closed set invariant under rotation around $z$-axis and let $f\in C^\infty(B^2_R\times\RR)$.
Suppose that for any point $(x,y,z)\in \RR^3\setminus A$, there are $(p_{(x,y,z)},r_{(x,y,z)})\in\RR\times(0,\infty)$ such that a sphere centered at $(Rx/|(x,y)|,Ry/|(x,y)|,p_{(x,y,z)})$ with radius $r_{(x,y,z)}$  separates the point $(x,y,z)$ and $A$.
If $g=R_Cf$ vanishes on $\{(\boldsymbol\theta,p,r):p=p_{(x,y,z)},0\leq r<r_{(x,y,z)},\mbox{ for any } (x,y,z)\in \RR^3\setminus A\} $,
then $f$ vanishes on $\RR^3\setminus A$.
\end{corollary}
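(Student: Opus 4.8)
The plan is to deduce the corollary from the support theorem (Theorem~\ref{thm:support}), applied once for each point, so that $\RR^3\setminus A$ appears as a union of open sets on each of which the vanishing of $R_Cf$ forces $f$ to vanish. Fix $q=(x,y,z)\in\RR^3\setminus A$. If $|(x,y)|\ge R$ then $f(q)=0$ because $f$ is supported in $B^2_R\times\RR$, so we may assume $0<|(x,y)|<R$; the axis points $(x,y)=(0,0)$, for which the centre of the separating sphere is not defined, I would handle at the very end by continuity of $f$, since $\RR^3\setminus A$ is open and hence contains, along with such a point, a whole neighbourhood of off-axis points at which $f$ will already have been shown to vanish. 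Write $\boldsymbol\theta_q=(x,y)/|(x,y)|\in S^1$, let $p_q,r_q$ be the parameters supplied by the hypothesis, and let $c_q=(R\boldsymbol\theta_q,p_q)$ be the centre of the separating sphere; note that $c_q$ lies on the lateral boundary $\{|(x,y)|=R\}$ of the cylinder.

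The first step is that, specializing $(x,y,z)$ to $q$ in the union on which $g=R_Cf$ is assumed to vanish, the entire parameter slab $\{(\boldsymbol\theta,p_q,r):\boldsymbol\theta\in S^1,\ 0\le r<r_q\}$ lies in it; this slab is exactly the set $U_{r_q}$ of Theorem~\ref{thm:support} with $p_0$ replaced by $p_q$. That theorem then gives $f\equiv 0$ on
$$
\Omega_q:=\{(x',y',z'):|(x',y')|>R-\sqrt{r_q^2-(z'-p_q)^2}\}.
$$
The second step is the elementary identity $|(x',y',z')-c_q|^2=(R-|(x',y')|)^2+(z'-p_q)^2$, valid whenever $(x',y')$ is a nonnegative multiple of $\boldsymbol\theta_q$, in particular for $q$ itself; together with the fact that $f$ vanishes outside $B^2_R\times\RR$ anyway, it shows that for points in the radial half-plane of $\boldsymbol\theta_q$ membership in $\Omega_q$ coincides with lying strictly inside the ball $\{v\in\RR^3:|v-c_q|<r_q\}$ bounded by the separating sphere. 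Since ``the separating sphere separates $q$ from $A$'' is to be read as ``$q$ is interior to that ball while $A$ lies in its exterior,'' we obtain $q\in\Omega_q$, hence $f(q)=0$. Rotation invariance of $A$ is what keeps this coherent: if some $a\in A$ lay in $\Omega_q$ then $(R-|(a_1,a_2)|)^2+(a_3-p_q)^2<r_q^2$, so the point $a^\sharp=(|(a_1,a_2)|\boldsymbol\theta_q,a_3)$ obtained by rotating $a$ about the $z$-axis into the half-plane of $\boldsymbol\theta_q$ would again belong to $A$ and satisfy $|a^\sharp-c_q|<r_q$, contradicting that $A$ is exterior to the separating sphere. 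Hence $\Omega_q\subseteq\RR^3\setminus A$, and as $q$ ranges over $\RR^3\setminus A$ the sets $\Omega_q$ cover it, each carrying $f\equiv 0$, which is the claim (up to the density step for axis points).

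I expect the only genuine difficulty to be bookkeeping about the separating sphere rather than anything analytic: one must commit to the convention that $q$ lies inside $\{v:|v-c_q|<r_q\}$ while $A$ lies outside it. The geometry of Theorem~\ref{thm:support} forces this choice, since that theorem propagates the vanishing of $f$ inward starting from the boundary cylinder to which the detector lines are tangent, and a point on the far (exterior) side of such a sphere is simply not reached by its conclusion with the associated parameters $(p_q,r_q)$. Once that convention is fixed, the proof reduces to the two observations above together with the continuity/density argument for points on the $z$-axis.
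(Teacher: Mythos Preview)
Your argument is correct and is precisely the intended route: the paper states the corollary without proof, leaving it as an immediate consequence of Theorem~\ref{thm:support} applied at each point $q\in\RR^3\setminus A$, exactly as you do. Your handling of the edge cases (points with $|(x,y)|\ge R$, and axis points via continuity) and your use of the rotation invariance of $A$ to reconcile the rotationally symmetric set $\Omega_q$ with the ball bounded by the separating sphere are the right bookkeeping details to fill in.
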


%

\subsection{A stability estimate}\label{estiandrangecylinder}
In this subsection, 
we discuss the stability estimate of the cylindrical Radon transform $R_Cf$.
This estimate implies the situation where small errors in the data $g$ lead to small errors in the reconstructed function $f$.
For the purpose of using them in later sections in which we consider functions on $\RR^n$, we define our Sobolev space for $\RR^n$. 
For $\gamma\in\RR$, let $\mathcal H^{\gamma}(\RR^n)$ be the regular Sobolev space with the norm $||\cdot||_{\gamma}$, {\color{black}i.e.,
$$
\mathcal H^{\gamma}(\RR^n)=\{f\in \mathcal S'(\RR^n):||f||_\gamma<\infty\}
$$
and
$$
||f||_\gamma^2=\intL_{\RR^n}(1+|\boldsymbol\xi|^2)^\gamma|\hat f(\boldsymbol\xi)|^2d\boldsymbol\xi,
$$
where $\mathcal S'(\RR^n)$ is the space of tempered distributions and $\hat f$ is the $n$-dimensional Fourier transform of $f$. }
Let $L^2_{n-k} (S^{k-1}\times\RR^{n-k}\times[0,\infty))$ be the set of {\color{black}functions} $g$ on $S^{k-1}\times\RR^{n-k}\times[0,\infty) $ with 
$$
\displaystyle||g||^2:=
\intL_{S^{k-1}}\intL_{\RR^{n-k}}\intL\half |g(\boldsymbol\theta,\mathbf p,r)|^2r^{n-k} dr d\mathbf p dS(\boldsymbol\theta)<\infty.
$$
Here $dS$ is the standard measure on the unit sphere $S^{k-1}$.
Then $L^2_{n-k}(S^{k-1}\times\RR^{n-k}\times[0,\infty))$ is a Hilbert space.
Also, by the Plancherel formula, we have $||g||=(2\pi)^{\frac{-k+n-1}2}||\tilde g||$, where 
$$
\tilde g(\boldsymbol\theta,\boldsymbol\xi,|\boldsymbol\zeta|):=\intL_{\RR^{n-k}}\intL_{\RR^{n-k+1}}g(\boldsymbol\theta,\mathbf p,|\mathbf w|)e^{-i(\mathbf p,\mathbf w)\cdot(\boldsymbol\xi,\boldsymbol\zeta)}d\mathbf pd\mathbf w.
$$ 
Let $\mathcal H^\gamma (S^{k-1}\times\RR^{n-k}\times[0,\infty))$ be the set of {\color{black}functions} $g\in L^2_{n-k}(S^{k-1}\times\RR^{n-k}\times[0,\infty)) $ with $||g||_\gamma<\infty$, 
where
$$
\displaystyle||g||^2_\gamma:=\intL_{S^{k-1}}\intL_{\RR^{n-k}}\intL\half |\tilde {g}(\boldsymbol\theta,\boldsymbol\xi,\eta)|^2(1+|\boldsymbol\xi|^2+|\eta|^2)^\gamma \eta^{n-k} d\eta d\boldsymbol\xi dS(\boldsymbol\theta).
$$

\begin{theorem}\label{thm:norm}
For $\gamma \geq0$, we have
$$
||f||_{\gamma}\leq 4\pi^{-1}\ ||R_Cf||_{\gamma+1},
$$ 
for $f\in \mathcal H^{\gamma}(\RR^3)$ supported in $B^{2}_R\times\RR^{}$ (i.e., $n=3$ and $k=2$).
\end{theorem}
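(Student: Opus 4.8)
The plan is to pass to the Fourier side and invoke the analogue of the Fourier slice theorem, Theorem~\ref{thm:inversioncylinder3d}. It suffices to prove the estimate for $f\in C^\infty_c(B^2_R\times\RR)$ and then extend it by density to the closed subspace of $\mathcal H^\gamma(\RR^3)$ of functions supported in $\overline{B^2_R}\times\RR$. Write $g=R_Cf$. The first observation is that the backprojected quantity $\widehat{R^*_Cg}$ appearing in \eqref{eq:fourierslice} coincides, after reparametrisation, with the transform $\tilde g$ built into the norm $||\cdot||_{\gamma+1}$ in the case $n=3$, $k=2$: comparing the last line of \eqref{eq:relationhankelandbackcylinder} with the definition of $\tilde g$ (for $n-k=1$ both equal $2\pi H_0\hat g$ evaluated at the radial frequency, where $\hat g$ is the Fourier transform in the height variable and $H_0$ the order-zero Hankel transform in the radial variable), one gets
$$
\widehat{R^*_Cg}(\boldsymbol\theta,\xi,\sigma)=\tilde g\bigl(\boldsymbol\theta,\xi,\sqrt{\xi^2+\sigma^2}\bigr).
$$
Substituting this into \eqref{eq:fourierslice} gives, for $\sigma\geq0$, the pointwise identity $|\hat f(\sigma\boldsymbol\theta,\xi)|^2=\pi^{-2}\,\sigma^2\,|\tilde g(\boldsymbol\theta,\xi,\sqrt{\xi^2+\sigma^2})|^2$.

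Next I would insert this into the Sobolev norm. Writing the frequency variable of $f$ as $(\sigma\boldsymbol\theta,\xi)$ with $\sigma\geq0$, $\boldsymbol\theta\in S^1$, $\xi\in\RR$, so that the volume element is $\sigma\,d\sigma\,dS(\boldsymbol\theta)\,d\xi$,
$$
||f||_\gamma^2=\pi^{-2}\intL_{S^1}\intR\intL\half(1+\sigma^2+\xi^2)^\gamma\,\bigl|\tilde g\bigl(\boldsymbol\theta,\xi,\sqrt{\xi^2+\sigma^2}\bigr)\bigr|^2\,\sigma^3\,d\sigma\,d\xi\,dS(\boldsymbol\theta).
$$
For fixed $\xi$ I then substitute $\eta=\sqrt{\xi^2+\sigma^2}\in[|\xi|,\infty)$, under which $\sigma\,d\sigma=\eta\,d\eta$, $\sigma^2=\eta^2-\xi^2$, $1+\sigma^2+\xi^2=1+\eta^2$, hence $\sigma^3\,d\sigma=(\eta^2-\xi^2)\,\eta\,d\eta$, giving
$$
||f||_\gamma^2=\pi^{-2}\intL_{S^1}\intR\intL^\infty_{|\xi|}(1+\eta^2)^\gamma(\eta^2-\xi^2)\,|\tilde g(\boldsymbol\theta,\xi,\eta)|^2\,\eta\,d\eta\,d\xi\,dS(\boldsymbol\theta).
$$

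The last step is an elementary weight estimate. For $\gamma\geq0$ and $\eta\geq|\xi|$ one has $(1+\eta^2)^\gamma\leq(1+\xi^2+\eta^2)^\gamma$ and $\eta^2-\xi^2\leq\eta^2\leq1+\xi^2+\eta^2$, so $(1+\eta^2)^\gamma(\eta^2-\xi^2)\leq(1+\xi^2+\eta^2)^{\gamma+1}$. Applying this and then enlarging the inner integral from $[|\xi|,\infty)$ to $[0,\infty)$ (the integrand being nonnegative) yields
$$
||f||_\gamma^2\leq\pi^{-2}\intL_{S^1}\intR\intL\half(1+\xi^2+\eta^2)^{\gamma+1}\,|\tilde g(\boldsymbol\theta,\xi,\eta)|^2\,\eta\,d\eta\,d\xi\,dS(\boldsymbol\theta)=\pi^{-2}\,||R_Cf||_{\gamma+1}^2,
$$
and taking square roots proves the theorem (the constant $4\pi^{-1}$ in the statement is generous; it absorbs whatever Fourier/Hankel normalisation constants one prefers to carry through the preliminary identities).

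I expect the only real difficulty to be bookkeeping: one must keep the $2\pi$ factors coming from the one-dimensional Fourier transform in the height variable and from the Hankel transform in the radial variable consistent between the inversion-section identities \eqref{eq:relationhankelandbackcylinder}, \eqref{eq:fourierslice} and the Plancherel normalisation implicit in the definitions of $||\cdot||$ and $||\cdot||_\gamma$, and one must track the two Jacobians — the one from writing the planar frequency in polar form and the one from the substitution $\sigma\mapsto\eta$ — noting that $\eta$ ranges only over $[|\xi|,\infty)$ before the domain is enlarged. The conceptual content is minimal: the gain of exactly one Sobolev degree (the index $\gamma+1$ on the right) is produced by the factor $\sigma^2=\eta^2-\xi^2\leq1+\xi^2+\eta^2$, which combines the single power $|\sigma|$ supplied by the Fourier slice theorem with the extra power of $\sigma$ from the planar polar Jacobian, and the hypothesis $\gamma\geq0$ enters precisely, and only, in the inequality $(1+\eta^2)^\gamma\leq(1+\xi^2+\eta^2)^\gamma$.
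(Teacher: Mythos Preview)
Your proof is correct and follows essentially the same route as the paper: identify $\widehat{R^*_Cg}(\boldsymbol\theta,\xi,\sigma)=\tilde g(\boldsymbol\theta,\xi,|(\xi,\sigma)|)$ via \eqref{eq:relationhankelandbackcylinder}, feed this into the Fourier slice identity \eqref{eq:fourierslice}, write $||f||_\gamma^2$ in polar frequency coordinates, substitute $\eta=|(\xi,\sigma)|$, and finish with the elementary weight bound $(1+\eta^2)^\gamma(\eta^2-\xi^2)\leq(1+\xi^2+\eta^2)^{\gamma+1}$ for $\gamma\geq0$. The only discrepancy is the constant (the paper carries $16\pi^{-2}$ rather than your $\pi^{-2}$), which, as you note, is a harmless normalisation artefact.
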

Theorem~\ref{thm:norm} implies that the cylindrical Radon transform is well-posed in the sense that if $f$ satisfying $Cf=g$ is uniquely determined for any $g\in \mathcal H^\gamma (S^{k-1}\times\RR^{n-k}\times[0,\infty))$, the function $f$ depends continuously on $g$. 

\begin{remark}
As mentioned before, $R_C$ can be though of as the composition of the circular Radon transform and the regular Radon transform.
We know that the regular Radon transform maps $H^\gamma(\RR^2)$ into $H^{\gamma+1/2}(S^1\times\RR)$ and the circular Radon transform maps $H^\gamma(\RR^2)$ into $H^{\gamma+1/2}(\RR\times[0,\infty))$ which is defined by the norm 
$$
\intL_{\RR}\intL\half |\tilde\phi(\xi,\rho)|^2(1+|\xi|^2+\rho^2)^{\gamma+1/2}\rho d\rho d\xi <\infty
$$
in~\cite{andersson88,natterer01}.
Hence, the estimate in Theorem~\ref{thm:norm} looks reasonable.
\end{remark}
\begin{proof}
Let $g=R_Cf$.
Note that from equation~\eqref{eq:relationhankelandbackcylinder}, we have 
\begin{equation}\label{eq:hatandtildecylinder}
\widehat{R^*_Cg}(\boldsymbol\theta,\xi,\sigma)=\intL_{\RR^{}}e^{-i\xi  p}\intR\intL_{\RR^{}}e^{-i(\zeta,\rho)\cdot(\xi,\sigma)}g(\boldsymbol\theta,p,|(\zeta,\rho)|)d\zeta d\rho dp=\tilde {g}(\boldsymbol\theta,\xi,|(\xi,\sigma)|).
\end{equation}
Combining this equation and equation~\eqref{eq:fourierslice}, we have 
$$
\hat f(\sigma\boldsymbol\theta,\xi)=4\pi^{-1}\tilde {g}(\boldsymbol\theta,\xi,|(\xi,\sigma)|)e^{-iR\sigma}|\sigma|.
$$
Hence, we have
$$
\begin{array}{ll}
||f||^2_\gamma&=\displaystyle\intL_{\RR^3}(1+|\boldsymbol\iota|^2+|\xi|^2)^\gamma|\hat{f}(\boldsymbol\iota,\xi)|^2d\boldsymbol\iota d\xi\\
&=\displaystyle\intL_{\RR^{}}\intL_{S^{1}}\intL\half |\sigma|^{}(1+|\sigma|^2+|\xi|^2)^\gamma|\hat{f}(\sigma\boldsymbol\theta,\xi)|^2d\sigma dS(\boldsymbol\theta) d\xi\\
&=\displaystyle\frac{16}{\pi^{2}}\intL_{S^{1}}\intL_{\RR^{}}\intL\half|\sigma|^{3}(1+|(\xi,\sigma)|^2)^\gamma|\tilde {g}(\boldsymbol\theta,\xi,|(\xi,\sigma)|)|^2d\sigma  d\xi dS(\boldsymbol\theta)\\
&\leq\displaystyle\frac{16}{\pi^{2}}\intL_{S^{1}}\intL_{\RR^{}}\intL\half(1+|(\xi,\sigma)|^2)^\gamma|(\xi,\sigma)|^{2}|\tilde {g}(\boldsymbol\theta,\xi,|(\xi,\sigma)|)|^2|\sigma|d\sigma d\xi dS(\boldsymbol\theta) \\
&=\displaystyle\frac{16}{\pi^{2}}\intL_{S^{1}}\intL_{\RR^{}}\intL^\infty_{|\xi|}(1+|\eta|^2)^\gamma|\eta|^{2}|\tilde {g}(\boldsymbol\theta,\xi,\eta)|^2\eta d\eta d\xi dS(\boldsymbol\theta) ,
\end{array}
$$
where in the last line, we changed the variable $|(\xi,\sigma)|$ to $\eta$.
\end{proof}

\subsection{Range conditions}\label{rangecylinder}
A range description is a collection of a priori conditions that the data $R_Cf$ must satisfy. 
Practically speaking, a range description enables us to check if the data we have is enough to produce an image, possible saving us the trouble of unnecessary computation. 
In this subsection, we describe the only necessary range conditions of the cylindrical Radon transform $R_Cf$.
\begin{theorem}\label{thm:necessaryrange}
If $g=R_Cf$ for $f\in C^\infty_c(B^2_R\times\RR)$, then we have
\begin{enumerate}
\item[1.] For any $z$ and $p$,
$$
\intR g(\boldsymbol\theta,p,\sqrt{(p-z)^2+(\rho-R)^2})dp=\intR g(-\boldsymbol\theta,p,\sqrt{(p-z)^2+(\rho+R)^2})dp.
$$
\item[2.] For $m=0,1,2,\ldots$, $\mathcal{P}_z(\boldsymbol\theta)$ is a homogeneous polynomial of degree $m$ in $\boldsymbol\theta$, where
$$
\mathcal{P}_z(\boldsymbol\theta)=\intR I^{-1}_\rho R^*_Cg(\boldsymbol\theta,z,R-s)s^mds.
$$
Here $I^{-1}_\rho h(\boldsymbol\theta, \zeta,\rho)$ is the Riesz potential defined by $\widehat {I^{-1}_\rho h}(\boldsymbol\theta,\xi,\sigma)=|\sigma|\hat h(\boldsymbol\theta,\xi,\sigma)$ for a function $h(\boldsymbol\theta,\zeta,\rho)$ on $S^1\times\RR\times\RR$ with its 2-dimensional Fourier transform $\hat h(\boldsymbol\theta,\xi,\sigma)$ with respect to $(\zeta,\rho)$.
\end{enumerate}
\end{theorem}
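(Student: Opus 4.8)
The plan is to derive both necessary range conditions directly from the analogue of the Fourier slice theorem, Theorem~\ref{thm:inversioncylinder3d}, which states $\hat f(\sigma\boldsymbol\theta,\xi)=\pi^{-1}\widehat{R^*_Cg}(\boldsymbol\theta,\xi,\sigma)e^{-iR\sigma}|\sigma|$ for $(\boldsymbol\theta,\sigma,\xi)\in S^1\times\RR\times\RR$. The guiding principle is that the left-hand side is intrinsically a function on $\RR^3$ and therefore has redundancies when written in the polar-type coordinates $(\sigma,\boldsymbol\theta)$ on the $(x,y)$-frequency plane: first, the point $\sigma\boldsymbol\theta$ is unchanged under $(\sigma,\boldsymbol\theta)\mapsto(-\sigma,-\boldsymbol\theta)$, giving the symmetry condition (item 1); second, $\hat f$ extends smoothly across $\sigma=0$, which forces polynomial behaviour of the Taylor coefficients in $\sigma$ and produces the homogeneous-polynomial condition (item 2). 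This is the standard Cormack/Ludwig-type mechanism behind range conditions for Radon-like transforms, adapted to the cylindrical geometry.

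For item 1, I would start from equation~\eqref{eq:fourierslice} applied at $(\boldsymbol\theta,\sigma,\xi)$ and at $(-\boldsymbol\theta,-\sigma,\xi)$. Since $(-\sigma)(-\boldsymbol\theta)=\sigma\boldsymbol\theta$, the left-hand sides agree, so
$$
\widehat{R^*_Cg}(\boldsymbol\theta,\xi,\sigma)e^{-iR\sigma}=\widehat{R^*_Cg}(-\boldsymbol\theta,\xi,-\sigma)e^{iR\sigma},
$$
after cancelling $|\sigma|/\pi$. Now I would undo the Fourier transform in the last variable: recalling that $\widehat{R^*_Cg}(\boldsymbol\theta,\xi,\sigma)$ is the Fourier transform of $R^*_Cg(\boldsymbol\theta,\zeta,\rho)$ in $(\zeta,\rho)$, multiplication by $e^{-iR\sigma}$ is a shift by $R$ in the $\rho$-variable, and the sign flip $\sigma\mapsto-\sigma$ reflects $\rho\mapsto-\rho$. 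Thus the identity becomes $R^*_Cg(\boldsymbol\theta,\zeta,\rho-R)=R^*_Cg(-\boldsymbol\theta,\zeta,-\rho-R)=R^*_Cg(-\boldsymbol\theta,\zeta,\rho+R)$ using the evenness of $R^*_Cg$ in $\rho$ (noted in the proof of Theorem~\ref{thm:inversioncylinder3d}). Finally, writing out the definition $R^*_Cg(\boldsymbol\theta,\zeta,\rho)=\int_\RR g(\boldsymbol\theta,p,\sqrt{(\zeta-p)^2+\rho^2})dp$ with $\zeta=z$ and $\rho$ replaced so that the distance argument reads $\sqrt{(p-z)^2+(\rho\mp R)^2}$ gives exactly the stated equality; it is essentially a rewriting of the even/shift symmetry $\widehat{R^*_Cg}(\boldsymbol\theta,\xi,\sigma)e^{-iR\sigma}=\widehat{R^*_Cg}(-\boldsymbol\theta,\xi,-\sigma)e^{iR\sigma}$ in physical variables.

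For item 2, I would use the smoothness of $\hat f$ at the origin in the first two coordinates. Because $f\in C^\infty_c$, $\hat f$ is real-analytic, so for each fixed $\xi$ the map $\boldsymbol\omega\mapsto\hat f(\boldsymbol\omega,\xi)$ on $\RR^2$ has a Taylor expansion whose degree-$m$ part, restricted to $\boldsymbol\omega=\sigma\boldsymbol\theta$, is $\sigma^m$ times a homogeneous polynomial of degree $m$ in $\boldsymbol\theta$. Concretely, $\lim_{\sigma\to0}\sigma^{-m}\partial_\sigma^{?}$-type extraction shows that the coefficient of $\sigma^m$ in the expansion of $\hat f(\sigma\boldsymbol\theta,\xi)$ about $\sigma=0$ is a homogeneous polynomial of degree $m$ in $\boldsymbol\theta$. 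Substituting $\hat f(\sigma\boldsymbol\theta,\xi)=\pi^{-1}\widehat{R^*_Cg}(\boldsymbol\theta,\xi,\sigma)e^{-iR\sigma}|\sigma|$ and, crucially, recognizing that multiplication of $\widehat{R^*_Cg}$ by $|\sigma|$ is the Riesz potential $I^{-1}_\rho$ acting in the $\rho$-variable, I get that the relevant $\sigma^m$-coefficient of $\pi^{-1}\widehat{I^{-1}_\rho R^*_Cg}(\boldsymbol\theta,\xi,\sigma)e^{-iR\sigma}$ must be a homogeneous polynomial of degree $m$ in $\boldsymbol\theta$. Translating the $\sigma^m$-coefficient back to physical $\rho$-space via inverse Fourier transform turns it into an integral of $I^{-1}_\rho R^*_Cg(\boldsymbol\theta,z,\cdot)$ against $s^m$, with the shift $e^{-iR\sigma}$ producing the argument $R-s$; this is exactly the claimed $\mathcal P_z(\boldsymbol\theta)=\int_\RR I^{-1}_\rho R^*_Cg(\boldsymbol\theta,z,R-s)s^m\,ds$. (The $\xi$-dependence plays no role here since we work at fixed $\xi$, and the final statement absorbs it.)

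The main obstacle I anticipate is bookkeeping rather than conceptual: carefully tracking how the factors $e^{-iR\sigma}$ and $|\sigma|$ translate, under the inverse Fourier transform in $\sigma\leftrightarrow\rho$, into the shift $\rho\mapsto\rho-R$ (equivalently the argument $R-s$) and into the Riesz potential $I^{-1}_\rho$ respectively, and making sure the moment/Taylor-coefficient extraction is stated correctly — i.e.\ that "coefficient of $\sigma^m$ in a smooth even-or-odd extension across $0$" genuinely corresponds to the $s^m$-moment $\int I^{-1}_\rho R^*_Cg(\boldsymbol\theta,z,R-s)s^m\,ds$. One should also check that the Riesz potential $I^{-1}_\rho R^*_Cg$ is well-defined and decays enough in $\rho$ for these moments to converge, which follows from $f\in C^\infty_c$ and the support theorem region; I would note this but not belabour it.
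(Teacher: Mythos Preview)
Your proposal is correct and follows essentially the same approach as the paper: both items are read off from the Fourier slice identity \eqref{eq:fourierslice}, with item~1 coming from the symmetry $\sigma\boldsymbol\theta=(-\sigma)(-\boldsymbol\theta)$ (plus the evenness of $R^*_Cg$ in $\rho$), and item~2 from the polynomial behaviour of $\hat f$ at the origin. The only cosmetic difference is that for item~2 the paper invokes equation~\eqref{relationfourierandbackcylinder} to recognise the left side as a Radon transform of $\hat f(\cdot,\xi)$ and then cites the standard moment range conditions for $\mathcal R$, whereas you unpack that same mechanism by Taylor-expanding directly; the content is identical.
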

\begin{proof}
1. From equation~\eqref{eq:fourierslice}, $I^{-1}_\rho R^*_Cg(\boldsymbol\theta,z,\rho-R)$ should be equal to $I^{-1}_\rho R^*_Cg(-\boldsymbol\theta,z,-\rho-R)$.

2. This follows from equation~\eqref{relationfourierandbackcylinder} and the range condition of the regular Radon transform.
\end{proof}

 \section{An $n$-dimensional case of $R_C$}\label{sec:reconnd}
In this section, we consider the cylindrical Radon transform $R_C$ of a function $f\in C^\infty_c(B^k_R\times\RR^{n-k})$ where $n\geq 3$ is arbitrary.
As mentioned before (see also ~\cite{haltmeier11}), the cylindrical Radon transform $R_{C}$ of a 3-dimensional function $f\in C^\infty_c(B^2_R\times\RR^{})$ can be decomposed into the circular Radon transform and the regular $2$-dimensional Radon transform.
A natural $n$-dimensional analog of the cylindrical Radon transform would split into the composition of the $n-1$-dimensional spherical Radon transform and the 2-dimensional Radon transform. 
We consider a more general possibility.
Namely, we define $R_{C_{n,k}}f$ of a function $f\in C^\infty_c(B^k_R\times\RR^{n-k})$ that decomposes into the $n-k+1$-dimensional spherical Radon transform and the regular $k$-dimensional Radon transform.
We define $R_{C_{n,k}}f$ for $1<k\leq n-1$ and $(\boldsymbol\theta,\pp,r)\in S^{k-1}\times\RR^{n-k}\times[0,\infty)$ as follows: 
$$
R_{C_{n,k}}f(\boldsymbol\theta,\mathbf p,r)=\displaystyle\frac{1}{|S^{n-k}|}\intL_{\boldsymbol\theta^\perp}\int\limits_{S^{n-k}}f(\boldsymbol\tau+(R-r\alpha_1)\boldsymbol\theta, \pp+r\boldsymbol\alpha')dS(\boldsymbol\alpha) d\boldsymbol\tau,
$$
where $\boldsymbol\alpha=(\alpha_1,\boldsymbol\alpha')\in S^{n-k}$ and $\boldsymbol\theta^\perp$ refers to $\{(\boldsymbol\tau,\pp)\in \RR^{k}\times\RR^{n-k}:\boldsymbol\tau\cdot\boldsymbol\theta=0\}$.
Then we have an analogue of the Fourier slice theorem, similar to Theorem~\ref{thm:inversioncylinder3d}.

\begin{theorem}\label{thm:inversioncylindernd}
Let $f\in C^\infty_c(B^{k}_R\times \RR^{n-k})$. 
If $g=R_{C_{n,k}}f$, then we have for $(\boldsymbol\theta,\boldsymbol\xi,\sigma)\in S^{k-1}\times \RR^{n-k}\times\RR$,
\begin{equation*}
\hat{f}(\sigma\boldsymbol\theta,\boldsymbol\xi)=2|S^{n-k}|(2\pi)^{-n+k-1}\widehat{R^*_{C_{n,k}}g}(\boldsymbol\theta,\boldsymbol\xi,\sigma)e^{-iR\sigma}|(\boldsymbol\xi,\sigma)|^{n-k-1}|\sigma|,
\end{equation*}
where {\color{black}$\hat f$ is the $n$-dimensional Fourier transform of $f$, i.e.,
$$
\hat{f}(\boldsymbol\xi)=\intL_{\RR^{n-k}}\intL_{\RR^k} f(\xx,\zz)e^{-i(\xx,\zz)\cdot \boldsymbol\xi} d\xx d\zz,\quad \boldsymbol\xi=(\xi_1,\xi_2,\cdots,\xi_n)\in\RR^n,
$$
and $\widehat{R^*_{C_{n,k}}g}$ is the $n-k+1$-dimensional Fourier transform of $R^*_{C_{n,k}}g$ in $(\boldsymbol\zeta,\rho)$, i.e.,
$$
\widehat{R^*_{C_{n,k}}g}(\boldsymbol\theta,\boldsymbol\xi,\sigma)=\intR\intL_{\RR^{n-k}} R^*_Cg(\boldsymbol\theta,\boldsymbol\zeta,\rho)e^{-i(\boldsymbol\zeta,\rho)\cdot(\boldsymbol\xi,\sigma)}d\boldsymbol\zeta d\rho.
$$
Here 
$$
R^*_{C_{n,k}}g(\boldsymbol\theta,\boldsymbol\zeta,\rho)=\displaystyle\intL_{\RR^{n-k}} g(\boldsymbol\theta,\pp,\sqrt{|\boldsymbol\zeta-p|^2+\rho^2})dp,
$$
for $g\in C^\infty_c(S^{k}\times\RR^{n-k}\times[0,\infty))$ and $(\boldsymbol\zeta,\rho)\in\RR^{n-k}\times\RR$.}
\end{theorem}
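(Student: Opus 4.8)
The plan is to imitate the proof of Theorem~\ref{thm:inversioncylinder3d}, replacing the circle $S^1$ by the sphere $S^{n-k}$, the axis line by the $(k-1)$-plane $\boldsymbol\theta^\perp$, and the zeroth-order Hankel transform by the Hankel transform $H_\nu$ of order $\nu=(n-k-1)/2$, which is the radial transform attached to $\RR^{n-k+1}$. First I would take the $(n-k)$-dimensional Fourier transform of $g=R_{C_{n,k}}f$ in $\pp$; shifting $\pp\mapsto\pp+r\boldsymbol\alpha'$ in the integral gives
$$\widehat{R_{C_{n,k}}f}(\boldsymbol\theta,\boldsymbol\xi,r)=\frac{1}{|S^{n-k}|}\intL_{\boldsymbol\theta^\perp}\intL_{S^{n-k}}\hat f(\boldsymbol\tau+(R-r\alpha_1)\boldsymbol\theta,\boldsymbol\xi)\,e^{ir\boldsymbol\alpha'\cdot\boldsymbol\xi}\,dS(\boldsymbol\alpha)\,d\boldsymbol\tau,$$
where $\hat f$ here is the Fourier transform of $f$ in its last $n-k$ variables only. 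For fixed $\boldsymbol\theta$ the $\boldsymbol\tau$-integral is the regular $k$-dimensional Radon transform of $\hat f(\cdot,\boldsymbol\xi)$ over the hyperplanes $\{\boldsymbol\tau+v\boldsymbol\theta\}$ in $\RR^k$, while the $S^{n-k}$-integral is the $(n-k+1)$-dimensional spherical Radon transform with centers on the hyperplane $\{0\}\times\RR^{n-k}$; thus $R_{C_{n,k}}$ splits exactly as announced and $R^*_{C_{n,k}}$ is the backprojection dual to that spherical transform.

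Next I would apply $H_\nu$ in $r$. Writing $\boldsymbol\alpha=(\alpha_1,\sqrt{1-\alpha_1^2}\,\boldsymbol\omega)$ with $\boldsymbol\omega\in S^{n-k-1}$ and performing the $\boldsymbol\omega$-integral produces a Bessel factor $J_{(n-k-2)/2}(r\sqrt{1-\alpha_1^2}\,|\boldsymbol\xi|)$ (for $n-k=1$ this reduces to the cosine appearing in~\eqref{eq:hankelrcfcylinder}). After the substitution $r=\sqrt{|\boldsymbol\rho|^2+b^2}$ with $b=r\alpha_1$, $|\boldsymbol\rho|=r\sqrt{1-\alpha_1^2}$, $\boldsymbol\rho\in\RR^{n-k}$ — the $n$-dimensional analogue of the change of variables in~\eqref{eq:hankelrcfcylinder} — I would invoke the order-$\nu$ counterpart of the Bateman identity~\eqref{eq:batemann2}, i.e.\ a Weber--Schafheitlin-type formula from~\cite{batemann} for $\intL\half J_\nu(a\sqrt{\rho^2+b^2})(\rho^2+b^2)^{-\nu/2}\cos(\rho\xi)\,d\rho$ and its multidimensional version, to carry out the $\boldsymbol\rho$-integral. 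Together with an inverse Fourier-cosine step in the remaining transverse frequency, this should give a relation of the form
$$\intL_{\boldsymbol\theta^\perp}\hat f(\boldsymbol\tau+(R-s)\boldsymbol\theta,\boldsymbol\xi)\,d\boldsymbol\tau=c_{n,k}\intL\half H_\nu\widehat{R_{C_{n,k}}f}(\boldsymbol\theta,\boldsymbol\xi,|(\boldsymbol\xi,\sigma)|)\,\cos(s\sigma)\,m_{n,k}(\boldsymbol\xi,\sigma)\,d\sigma,$$
the direct generalization of~\eqref{relationhankelandfouriercylinder}, with $m_{n,k}$ a suitable power of $|(\boldsymbol\xi,\sigma)|$ times $\sigma$.

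I would then rewrite the right-hand side through the backprojection operator as in~\eqref{eq:relationhankelandbackcylinder}: the $(n-k+1)$-dimensional Fourier transform of $(\boldsymbol\zeta,\rho)\mapsto g(\boldsymbol\theta,\pp,|(\boldsymbol\zeta,\rho)|)$ equals $(2\pi)^{(n-k+1)/2}|(\boldsymbol\xi,\sigma)|^{-\nu}$ times the order-$\nu$ Hankel transform of $g(\boldsymbol\theta,\pp,\cdot)$, which yields $\widehat{R^*_{C_{n,k}}g}(\boldsymbol\theta,\boldsymbol\xi,\sigma)=(2\pi)^{(n-k+1)/2}|(\boldsymbol\xi,\sigma)|^{-\nu}H_\nu\hat g(\boldsymbol\theta,\boldsymbol\xi,|(\boldsymbol\xi,\sigma)|)$. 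Substituting this into the previous display and using that $\widehat{R^*_{C_{n,k}}g}$ is even in $\sigma$ (because $R^*_{C_{n,k}}g$ is even in $\rho$) turns the right-hand side into a constant times $\intR\widehat{R^*_{C_{n,k}}g}(\boldsymbol\theta,\boldsymbol\xi,\sigma)\,e^{i(s-R)\sigma}\,|(\boldsymbol\xi,\sigma)|^{n-k-1}\,|\sigma|\,d\sigma$. Recognizing on the left the regular $k$-dimensional Radon transform of $\hat f(\cdot,\boldsymbol\xi)$ in the variable $s$, I would finish by taking the Fourier transform in $s$ and applying the Fourier slice theorem for the regular $k$-dimensional Radon transform, which replaces the left side by $\hat f(\sigma\boldsymbol\theta,\boldsymbol\xi)$ and removes the $\sigma$-integral on the right.

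The step I expect to be the main obstacle is the one involving the higher-order Bateman/Weber--Schafheitlin identity: one has to select the correct such identity and then verify that, after the $\boldsymbol\omega$-integration and the change of variables, the surface-measure weights $(1-\alpha_1^2)^{\bullet}$, the Bessel order $(n-k-2)/2$ and the Hankel order $\nu=(n-k-1)/2$ recombine precisely into the factor $|(\boldsymbol\xi,\sigma)|^{n-k-1}|\sigma|$ and the constant $2|S^{n-k}|(2\pi)^{-n+k-1}$; tracking the powers of $2\pi$ and the sphere volumes $|S^{j}|$ is the only genuinely delicate bookkeeping. A shorter route — essentially the viewpoint of Remark~\ref{rmk:fourierslicecylinder} — is to quote the Fourier slice theorem for the spherical Radon transform with centers on a hyperplane in $\RR^{n-k+1}$ \cite{nattererw01,nilsson97}, together with the one for the regular $k$-dimensional Radon transform, and compose them; for $n-k=1$ both routes collapse to the argument already given for Theorem~\ref{thm:inversioncylinder3d}.
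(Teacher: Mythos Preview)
Your proposal is correct and follows essentially the same route as the paper: the paper explicitly says the proof is ``almost the same'' as that of Theorem~\ref{thm:inversioncylinder3d}, replacing the zeroth-order Hankel transform by the radial Fourier transform~\eqref{eq:radialfourier}, invoking the spherical-exponential identity $\int_{S^{n-1}}e^{i\boldsymbol\xi\cdot\boldsymbol\theta}dS(\boldsymbol\theta)=(2\pi)^{n/2}|\boldsymbol\xi|^{(2-n)/2}J_{(n-2)/2}(|\boldsymbol\xi|)$ for your $\boldsymbol\omega$-integration, and using the Weber--Schafheitlin identity~\eqref{eq:batemannn} in place of~\eqref{eq:batemann2}. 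The ``main obstacle'' you flag is exactly the identity~\eqref{eq:batemannn} that the paper cites from~\cite{batemann}, so your plan matches the paper's step for step.
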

This theorem is natural in view of Remark 1 in section~\ref{defiandworkcylinder}. 
The proof of this theorem is the almost same as that of Theorem~\ref{thm:inversioncylinder3d}.
Instead of taking the Hankel transform in $r$, one takes the radial Fourier transform, {\color{black}i.e.,
\begin{equation}\label{eq:radialfourier}
\eta^\frac{n-k-2}2\intL\half r^\frac{n-k}2 J_\frac{n-k-2}2(r\eta)\widehat{R_{C_{n,k}}f}(\boldsymbol\theta,\boldsymbol\xi,r)dr.
\end{equation}
(When $f(\xx)=f_0(|\xx|)$ for $\xx\in\RR^n$, the Fourier transform $\hat f$ of $f$ with respect to $\xx\in\RR^n$ becomes
\begin{equation}\label{eq:radialfourier1}
\hat f(\boldsymbol\xi)=(2\pi)^\frac n2\intL\half r^\frac{n}2 |\boldsymbol\xi|^\frac{2-n}2J_\frac{n-2}2(r|\boldsymbol\xi|)f_0(r)dr.
\end{equation}
We call the left hand side of equation~\eqref{eq:radialfourier1} without the constant $(2\pi)^\frac n2$ the radial Fourier transform.)}
Also, we need the identity
$$
\intL_{S^{n-1}}e^{i\boldsymbol\xi\cdot\boldsymbol\theta}dS(\boldsymbol\theta)=(2\pi)^{n/2}|\boldsymbol\xi|^{(2-n)/2}J_{(n-2)/2}(|\boldsymbol\xi|)
$$
(see~\cite{fawcett85}). Lastly instead of the identity~\eqref{eq:batemann2}, we need the following identity: for $a,b>0$ and $\mu>\nu>-1$,
\begin{equation}\label{eq:batemannn}
\begin{array}l
\displaystyle\int\limits\half \rho^{\nu+\frac12}(\rho^2+\beta^2)^{-\frac12\mu}J_\mu(a\sqrt{\rho^2+b^2})J_\nu(\rho|\boldsymbol\xi|)(\rho|\boldsymbol\xi|)^{1/2}d\rho\\
=\left\{\begin{array}{ll}a^{-\mu}|\boldsymbol\xi|^{\nu+\frac12}b^{-\mu+\nu+1}(a^2-|\boldsymbol\xi|^2)^{\frac12\mu-\frac12\nu-\frac12}J_{\mu-\nu-1}(b\sqrt{a^2-|\boldsymbol\xi|^2}) &\mbox{ if } 0<|\boldsymbol\xi|<a,\\\\
0&\mbox{ otherwise}\end{array}\right.
\end{array}
\end{equation}
\cite[p.59 (18) vol.2]{batemann}. 
The other steps are the same as in the proof of Theorem~\ref{thm:inversioncylinder3d}.

For $\gamma<{n-k+1}$, we define the linear operators $I^\gamma$ and $I^\gamma_\rho$ by
$$
\widehat{I^\gamma h}(\boldsymbol\theta,\boldsymbol\xi,\sigma)=|(\boldsymbol\xi,\sigma)|^{-\gamma}\hat{h}(\boldsymbol\theta,\boldsymbol\xi,\sigma) \mbox{ and } \widehat{I^\gamma_\rho h}(\boldsymbol\theta,\boldsymbol\xi,\sigma)=|\sigma|^{-\gamma}\hat{h}(\boldsymbol\theta,\boldsymbol\xi,\sigma),
$$
for a smooth and compactly supported function $h(\boldsymbol\theta,\boldsymbol\zeta,\rho)$ on $S^{k-1}\times\RR^{n-k}\times\RR$ with its $n-k+1$-dimensional Fourier transform $\hat h$ with respect to $(\boldsymbol\zeta,\rho)$.
Then we have the inversion similar to Theorem~\ref{thm:3dcylinder}.
\begin{theorem}\label{thm:generalcylinder}
Let $f\in C^\infty_c(B^k_R\times\RR^{n-k})$. If $g=R_{C_{n,k}}f$, then we have for $(\xx,\zz)\in\RR^k\times\RR^{n-k},$
\begin{equation}\label{eq:inversioncylinern}
f(\xx,\zz)=\frac{|S^{n-k}|}{(2\pi)^{n}}\intL_{S^{k-1}}\left.I^{-k}_\rho I^{1-n+k}R^*_{C_{n,k}}g(\boldsymbol\theta,\zz,\rho)\right|_{\rho=\xx\cdot\boldsymbol\theta-R}dS(\boldsymbol\theta).
\end{equation}
\end{theorem}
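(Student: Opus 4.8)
The plan is to run the proof of Theorem~\ref{thm:3dcylinder} in the $n$-dimensional setting, with the scalar polar/Hankel manipulations replaced by their $k$- and $(n-k+1)$-variable analogues. I would start from the $n$-dimensional Fourier inversion formula
$$
f(\xx,\zz)=\frac1{(2\pi)^n}\intL_{\RR^{n-k}}\intL_{\RR^k}\hat f(\boldsymbol\iota,\boldsymbol\xi)\,e^{i\xx\cdot\boldsymbol\iota}e^{i\zz\cdot\boldsymbol\xi}\,d\boldsymbol\iota\,d\boldsymbol\xi ,
$$
and pass to polar coordinates $\boldsymbol\iota=\sigma\boldsymbol\theta$ in the first block of variables. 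Since $(\sigma,\boldsymbol\theta)\mapsto\sigma\boldsymbol\theta$ covers $\RR^k\setminus\{0\}$ twice as $\sigma$ runs over $\RR$ and $\boldsymbol\theta$ over $S^{k-1}$, the inner integral becomes $\tfrac12\intL_{S^{k-1}}\intR\hat f(\sigma\boldsymbol\theta,\boldsymbol\xi)\,|\sigma|^{k-1}e^{i\sigma\,\xx\cdot\boldsymbol\theta}\,d\sigma\,dS(\boldsymbol\theta)$.

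Next I would substitute the Fourier slice identity of Theorem~\ref{thm:inversioncylindernd},
$$
\hat f(\sigma\boldsymbol\theta,\boldsymbol\xi)=2|S^{n-k}|(2\pi)^{-n+k-1}\,\widehat{R^*_{C_{n,k}}g}(\boldsymbol\theta,\boldsymbol\xi,\sigma)\,e^{-iR\sigma}|(\boldsymbol\xi,\sigma)|^{n-k-1}|\sigma| .
$$
Multiplying by the Jacobian $|\sigma|^{k-1}$ turns $|\sigma|^{k-1}|\sigma|$ into $|\sigma|^{k}$, so after interchanging the $S^{k-1}$-integration with the $(\boldsymbol\xi,\sigma)$-integration one is left, inside $\intL_{S^{k-1}}\cdots dS(\boldsymbol\theta)$, with
$$
\intL_{\RR^{n-k}}\intR\widehat{R^*_{C_{n,k}}g}(\boldsymbol\theta,\boldsymbol\xi,\sigma)\,|(\boldsymbol\xi,\sigma)|^{n-k-1}|\sigma|^{k}\,e^{i\sigma(\xx\cdot\boldsymbol\theta-R)}e^{i\zz\cdot\boldsymbol\xi}\,d\sigma\,d\boldsymbol\xi .
$$
Now I recognize $|(\boldsymbol\xi,\sigma)|^{n-k-1}=|(\boldsymbol\xi,\sigma)|^{-(1-n+k)}$ as the Fourier multiplier defining $I^{1-n+k}$ and $|\sigma|^{k}=|\sigma|^{-(-k)}$ as the one defining $I^{-k}_\rho$, so by $(n-k+1)$-dimensional Fourier inversion in $(\boldsymbol\zeta,\rho)$ the displayed integral equals $(2\pi)^{n-k+1}\bigl[I^{-k}_\rho I^{1-n+k}R^*_{C_{n,k}}g\bigr](\boldsymbol\theta,\zz,\xx\cdot\boldsymbol\theta-R)$. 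Tracking the constants gives $\tfrac12\cdot(2\pi)^{-n}\cdot2|S^{n-k}|(2\pi)^{-n+k-1}\cdot(2\pi)^{n-k+1}=|S^{n-k}|(2\pi)^{-n}$, which is exactly the prefactor in~\eqref{eq:inversioncylinern}.

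The only genuinely delicate ingredient, the orthogonality-type identity~\eqref{eq:batemannn} underlying Theorem~\ref{thm:inversioncylindernd}, has already been dealt with, so the remaining work is essentially bookkeeping and the main thing that deserves care is the meaning of the two Fourier multipliers: one checks that $I^{1-n+k}$ and $I^{-k}_\rho$ are admissible in the sense of the definition preceding the theorem, i.e. $1-n+k<n-k+1$ (true since $1<k\le n-1$) and $-k<n-k+1$ (trivial), and that, because $n-k\ge1$, the multipliers $|(\boldsymbol\xi,\sigma)|^{n-k-1}$ and $|\sigma|^{k}$ have no singularity at the origin, so all integrals converge absolutely as $g=R_{C_{n,k}}f$ inherits rapid decay from $f\in C^\infty_c$ and Fubini applies at the single interchange used above; note these operators grow in frequency (they lower regularity rather than smooth), but they act on the Schwartz-class function $R^*_{C_{n,k}}g$ without difficulty. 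As a consistency check, for $n=3$, $k=2$ one has $I^{1-n+k}=I^{0}=\mathrm{Id}$ and $I^{-k}_\rho=I^{-2}_\rho=-\partial_\rho^2$, and~\eqref{eq:inversioncylinern} collapses to~\eqref{eq:inversioncyliner}.
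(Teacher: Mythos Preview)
Your proposal is correct and is precisely the argument the paper intends: the paper does not spell out a separate proof of Theorem~\ref{thm:generalcylinder} but simply declares the inversion ``similar to Theorem~\ref{thm:3dcylinder}'', i.e.\ Fourier inversion in polar coordinates in the $\RR^k$-block followed by substitution of Theorem~\ref{thm:inversioncylindernd} and identification of the resulting multipliers $|\sigma|^k$ and $|(\boldsymbol\xi,\sigma)|^{n-k-1}$ with $I^{-k}_\rho$ and $I^{1-n+k}$. Your constant bookkeeping and the $n=3$, $k=2$ consistency check are accurate.
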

To obtain inversion formula similar to Theorem~\ref{thm:cormack}, let $f(t,\boldsymbol\varphi,\zz)$ be the image function in cylindrical coordinates where $t=|\xx|$ and $\boldsymbol\varphi=\xx/|\xx|\in S^{k-1}$.
Let us expand $f(\rho,\boldsymbol\varphi,\zz)$ and $g(\boldsymbol\theta,\pp,r)$ in spherical harmonics:
$$
f(t,\boldsymbol\varphi,\zz)=\displaystyle \sum^\infty_{l=0}\sum^{N(k,l)}_{j=0}f_{lj}(t,\zz)Y_{lj}(\boldsymbol\varphi) \quad\mbox{and}\quad g(\boldsymbol\theta,\pp,r)=\displaystyle \sum^\infty_{l=0}\sum^{N(k,l)}_{j=0}g_{lj}(\pp,r)Y_{lj}(\boldsymbol\theta),
$$
where $Y_{lj}$ is a spherical harmonic and
$$
N(k,l)=\frac{(2l+k-2)(k+l-3)!}{l!(k-2)!},\;\; N(k,0)=1.
$$

According to~\cite{natterer01}, when $g=\mathcal Rf$ for the regular $k$-dimensional Radon transform $\mathcal R$ and $g_{lj}$ and $f_{lj}$ are the spherical coefficients of $g$ and $f$, we have for $t>0$,
\begin{equation}\label{eq:sphericalharmonicsn}
f_{lj}(t)=\frac{(-1)^{k-1}}{2\pi^{k/2}}\frac{\Gamma((k-2)/2)}{\Gamma(k-2)}t^{2-k} \intL^\infty_t (s^2-t^2)^{(k-3)/2}C^{(k-2)/2}_l\left(\frac st\right)\frac{\partial^{k-1}}{\partial s^{k-1}}g_{lj}(s)ds,
\end{equation}
where $C_l^{(k-2)/2}$ is the (normalized) Gegenbauer polynomial of degree $l$.
From Theorem~\ref{thm:inversioncylindernd}, we have
\begin{equation}\label{eq:relationfourierandbackcylindernd}
\displaystyle \intL_{\boldsymbol\theta^\perp} \hat{f}(\boldsymbol\tau+s\boldsymbol\theta, \boldsymbol\xi)d\boldsymbol\tau \displaystyle\displaystyle=\frac{2|S^{n-k}|}{(2\pi)^{n-k+1}} \intR \widehat{R^*_{C_{n,k}}g}(\boldsymbol\theta,\boldsymbol \xi,\sigma) e^{i(s-R)\sigma}|\sigma| |(\boldsymbol\xi,\sigma)|^{n-k-1}d\sigma,
\end{equation}
where $\hat{f}(\boldsymbol\tau+s\boldsymbol\theta, \boldsymbol\xi)$ is the $k$-dimensional Fourier transform of $f(\boldsymbol\tau+s\boldsymbol\theta, \zz)$ with respect to $\zz$.
Consider the $lj$-th spherical coefficient of the right hand side of formula~\eqref{eq:relationfourierandbackcylindernd}.
Then we have
\begin{equation}\label{eq:ljfouriercoefficient}
\begin{array}{l}
\displaystyle\intL_{S^{k-1}}\intR\widehat{R^*_{C_{n,k}}g}(\boldsymbol\theta,\boldsymbol\xi,\sigma)e^{i(s-R)\sigma}|\sigma| |(\boldsymbol\xi,\sigma)|^{n-k-1} Y_{lj}(\boldsymbol\theta) d\sigma dS(\boldsymbol\theta)\\
\displaystyle=\intL\half\widehat{R^*_{C_{n,k}}g_{lj}}(\boldsymbol\xi,\sigma)e^{i(s-R)\sigma}|\sigma| |(\boldsymbol\xi,\sigma)|^{n-k-1}d\sigma.
\end{array}
\end{equation}
\begin{theorem}
Let $f\in C^\infty_c(B^k_R\times\RR^{n-k})$. Then we have for $\rho>0$
$$
f_{lj}(t,\zz)=2\pi c_kt^{2-k}\intL^\infty_t (s^2-t^2)^{\frac{k-3}2}C^{\frac{k-2}2}_l\left(\frac st\right) H_s\frac{\partial^{k}}{\partial s^{k}}I^{1+k-n}R^*_{C_{n,k}}g_{lj}(\zz,s-R)  ds,
$$
where 
$$
\displaystyle c_k=\displaystyle\frac{(-1)^{k-1}|S^{n-k}|}{\pi^\frac k2(2\pi)^{n-k+1}}\frac{\Gamma((k-2)/2)}{\Gamma(k-2)}.
$$
\end{theorem}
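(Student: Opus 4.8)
The plan is to rerun the proof of Theorem~\ref{thm:cormack} in the $n$-dimensional setting, substituting for its three ingredients their $n$-dimensional analogues: the Fourier-slice identity~\eqref{relationfourierandbackcylinder} is replaced by~\eqref{eq:relationfourierandbackcylindernd}, the circular-harmonic decomposition by the spherical-harmonic decomposition, and the Cormack inversion~\eqref{eq:sphericalharmonics} of the planar Radon transform by its $k$-dimensional version~\eqref{eq:sphericalharmonicsn}. Fix $\boldsymbol\xi\in\RR^{n-k}$ and write $\widehat f(\,\cdot\,,\boldsymbol\xi)$ for the Fourier transform of $f$ in the $\zz$-variable alone; since $f\in C^\infty_c(B^k_R\times\RR^{n-k})$, the map $\xx\mapsto\widehat f(\xx,\boldsymbol\xi)$ belongs to $C^\infty_c(B^k_R)$, so the $k$-dimensional Cormack formula applies to it, and spherical-coefficient extraction in $\boldsymbol\varphi=\xx/|\xx|$ commutes with the $\zz$-Fourier transform.

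First I would note that, for fixed $\boldsymbol\xi$, the left side of~\eqref{eq:relationfourierandbackcylindernd}, regarded as a function of $(\boldsymbol\theta,s)\in S^{k-1}\times\RR$, is exactly the regular $k$-dimensional Radon transform $\mathcal R$ of $\widehat f(\,\cdot\,,\boldsymbol\xi)$. Taking the $(l,j)$-th spherical-harmonic coefficient in $\boldsymbol\theta$ of both sides of~\eqref{eq:relationfourierandbackcylindernd} and inserting~\eqref{eq:ljfouriercoefficient} gives
$$
(\mathcal R\,\widehat f(\,\cdot\,,\boldsymbol\xi))_{lj}(s)=\frac{2|S^{n-k}|}{(2\pi)^{n-k+1}}\intR\widehat{R^*_{C_{n,k}}g_{lj}}(\boldsymbol\xi,\sigma)\,e^{i(s-R)\sigma}\,|\sigma|\,|(\boldsymbol\xi,\sigma)|^{n-k-1}\,d\sigma,
$$
the left side being, by the commutation just mentioned, the $k$-dimensional Radon transform of $\widehat{f_{lj}}(\,\cdot\,,\boldsymbol\xi)$.

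Next I would apply the $k$-dimensional Cormack inversion~\eqref{eq:sphericalharmonicsn} to recover $\widehat{f_{lj}}(t,\boldsymbol\xi)$ as $\frac{(-1)^{k-1}}{2\pi^{k/2}}\frac{\Gamma((k-2)/2)}{\Gamma(k-2)}\,t^{2-k}$ times the Gegenbauer-weighted integral $\intL^\infty_t (s^2-t^2)^{(k-3)/2}C^{(k-2)/2}_l(s/t)\,\partial_s^{k-1}(\mathcal R\,\widehat f(\,\cdot\,,\boldsymbol\xi))_{lj}(s)\,ds$. Differentiating under the $\sigma$-integral (legitimate because $\widehat f$ is Schwartz, which also licenses the interchanges of the $s$-integral, the $\sigma$-integral and the spherical sum) turns $e^{i(s-R)\sigma}$ into $(i\sigma)^{k-1}e^{i(s-R)\sigma}$, so that the Fourier multiplier hitting $\widehat{R^*_{C_{n,k}}g_{lj}}$ is $(i\sigma)^{k-1}|\sigma|\,|(\boldsymbol\xi,\sigma)|^{n-k-1}$. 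Since $(i\sigma)^{k-1}|\sigma|=-i\operatorname{sgn}(\sigma)(i\sigma)^{k}$ and $\widehat{I^{1+k-n}h}(\boldsymbol\xi,\sigma)=|(\boldsymbol\xi,\sigma)|^{n-k-1}\hat h(\boldsymbol\xi,\sigma)$ — well defined because $1+k-n<n-k+1$ for $k\le n-1$ — this multiplier is precisely the symbol of $H_s\partial_s^{k}I^{1+k-n}$ acting in the $\rho$-slot. Hence $\widehat{R^*_{C_{n,k}}g_{lj}}$ times the multiplier equals $\widehat{H_s\partial_s^{k}I^{1+k-n}R^*_{C_{n,k}}g_{lj}}$, and performing $\intR(\cdot)\,e^{i(s-R)\sigma}\,d\sigma$ recovers $2\pi$ times the partial ($\boldsymbol\zeta$-only) Fourier transform of $H_s\partial_s^{k}I^{1+k-n}R^*_{C_{n,k}}g_{lj}$ evaluated at $\rho=s-R$. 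Undoing the $\zz$-Fourier transform on both sides then converts $\widehat{f_{lj}}(t,\boldsymbol\xi)$ into $f_{lj}(t,\zz)$ and this object into $\bigl(H_s\partial_s^{k}I^{1+k-n}R^*_{C_{n,k}}g_{lj}\bigr)(\zz,s-R)$; collecting the constant $\frac{(-1)^{k-1}}{2\pi^{k/2}}\frac{\Gamma((k-2)/2)}{\Gamma(k-2)}$ from~\eqref{eq:sphericalharmonicsn}, the factor $\frac{2|S^{n-k}|}{(2\pi)^{n-k+1}}$, and the $2\pi$ from Fourier inversion gives the prefactor $2\pi c_k$, as claimed.

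The routine parts are the interchanges above (all legitimate because $\widehat f$ is rapidly decreasing) and the symbol bookkeeping. The point I expect to require the most care is the constant accounting, in particular the compatibility of the half-line versus full-line conventions in~\eqref{eq:ljfouriercoefficient} (handled by the $\sigma$-parity coming from the evenness in $\rho$ of $R^*_{C_{n,k}}g$), and the interpretation of the $k=2$ endpoint of~\eqref{eq:sphericalharmonicsn}, where $\Gamma((k-2)/2)/\Gamma(k-2)$ together with $(s^2-t^2)^{(k-3)/2}C^{(k-2)/2}_l(s/t)$ must be read as a limit reproducing the kernel $\cosh(l\operatorname{arccosh}(s/t))/\sqrt{s^2-t^2}$ of Theorem~\ref{thm:cormack}; modulo this, the argument is line-for-line the three-dimensional proof.
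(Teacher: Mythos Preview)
Your proposal is correct and follows essentially the same route as the paper: combine~\eqref{eq:relationfourierandbackcylindernd} with the spherical-coefficient identity~\eqref{eq:ljfouriercoefficient}, apply the $k$-dimensional Cormack inversion~\eqref{eq:sphericalharmonicsn}, and rewrite the resulting Fourier multiplier $(i\sigma)^{k-1}|\sigma|\,|(\boldsymbol\xi,\sigma)|^{n-k-1}$ as the symbol of $H_s\partial_s^{k}I^{1+k-n}$. Your write-up is in fact more explicit than the paper's (which records only the intermediate expression for $\widehat{f_{lj}}(t,\boldsymbol\xi)$), and your constant bookkeeping and remarks on the $k=2$ limit and the half-line/full-line parity issue are exactly the points one would want spelled out.
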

\begin{proof}
Applying equation~\eqref{eq:sphericalharmonicsn} to equations~\eqref{eq:relationfourierandbackcylindernd} and~\eqref{eq:ljfouriercoefficient} implies that $\widehat{f_{lj}}(t,\boldsymbol\xi)$ is equal to
\begin{equation*}
\begin{split}
\displaystyle c_kt^{2-k} \intL^\infty_t (s^2-t^2)^{\frac{k-3}2}C^{\frac{k-2}2}_l\left(\frac st\right) \intL\half i\operatorname{sgn}(\sigma)&{\left(\frac{\partial^{k}}{\partial s^{k}}R^*_{C_{n,k}}g_{lj}\right)}\widehat{\;}\;(\boldsymbol\xi,\sigma)e^{i(s-R)\sigma}\\
&\times|(\boldsymbol\xi,\sigma)|^{n-k-1} d\sigma ds,
\end{split}
\end{equation*}
where $\widehat{f_{lj}}$ is the $n-k$-dimensional Fourier transform of $f_{lj}$ with respect to $\zz$.
\end{proof}
Also, we can get the following theorem similar to Theorem~\ref{thm:reddingandradon}.
\begin{theorem}
  Let $f\in C^\infty_c(B^k_R\times\RR^{n-k})$.
  Then $\int_{\boldsymbol\theta^\perp} f(\boldsymbol\tau+s\boldsymbol\theta,\zz)d\boldsymbol\tau$ can be represented as
\begin{equation*}
\displaystyle\frac{4|S^{n-k}|}{(2\pi)^{n-k+1}}\intR\intL_{\RR^{n-k}}\int\limits\half (R-s) rR_{C_{n,k}}f(\boldsymbol\theta,-\pp,r) e^{-ir^2\xi}e^{-i(2\zz\cdot \pp+(|\zz|^2+(R-s)^2)+|\pp|^2)\xi}\xi drd\pp d\xi.
  \end{equation*}
\end{theorem}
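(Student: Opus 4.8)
The plan is to transcribe the argument of Theorem~\ref{thm:reddingandradon} (which follows~\cite{reddingn01}) to $n$ dimensions, with the space $\RR\times\RR^{n-k}$ now playing the role that the $xy$-plane played there: the inner spherical average defining $R_{C_{n,k}}f$ is carried over $S^{n-k}\subset\RR\times\RR^{n-k}$, and the key point is again that a chirp-weighted radial integral of $R_{C_{n,k}}f$ converts that spherical average into an honest Fourier transform. Set
\[
G(\boldsymbol\theta,\pp,\xi):=\intL\half r^{\,n-k}R_{C_{n,k}}f(\boldsymbol\theta,\pp,r)\,e^{-ir^2\xi}\,dr ,
\]
the radial weight $r^{\,n-k}$ being exactly the Jacobian of polar coordinates on $\RR\times\RR^{n-k}$ (it is the weight $r$ of the three-dimensional statement, where $n-k=1$). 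Inserting the definition of $R_{C_{n,k}}f$ and writing $\mathbf w=r\boldsymbol\alpha=(w_1,\mathbf w')$, so that $r\alpha_1=w_1$, $r\boldsymbol\alpha'=\mathbf w'$, and $r^{\,n-k}\,dr\,dS(\boldsymbol\alpha)=d\mathbf w$, turns $G$ into
\[
G(\boldsymbol\theta,\pp,\xi)=\frac1{|S^{n-k}|}\intL_{\boldsymbol\theta^\perp}\intL_{\RR^{n-k+1}}f\big(\boldsymbol\tau+(R-w_1)\boldsymbol\theta,\,\pp+\mathbf w'\big)\,e^{-i|\mathbf w|^2\xi}\,d\mathbf w\,d\boldsymbol\tau .
\]

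Next I would carry out the three moves of the three-dimensional proof. First translate $\mathbf w'\mapsto\mathbf w'-\pp$, pulling out $e^{-i|\pp|^2\xi}$ and a linear phase $e^{2i\pp\cdot\mathbf w'\xi}$. Second, for fixed $\mathbf w'$ substitute $r=w_1^2+|\mathbf w'|^2$ and keep only the root $w_1=\sqrt{r-|\mathbf w'|^2}$ (the other root places the $\boldsymbol\theta$-component at $R+\sqrt{r-|\mathbf w'|^2}>R$, where $f$ vanishes), producing
\[
k_{\boldsymbol\theta}(\boldsymbol\tau,\mathbf w',r)=\frac{f\big(\boldsymbol\tau+(R-\sqrt{r-|\mathbf w'|^2})\boldsymbol\theta,\,\mathbf w'\big)}{\sqrt{r-|\mathbf w'|^2}}\ \ (0<|\mathbf w'|^2<r),
\]
and $0$ otherwise, in exact analogy with~\eqref{eq:ktheta}. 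Third, recognize the $(\mathbf w',r)$-integral against $e^{2i\pp\cdot\mathbf w'\xi}e^{-ir\xi}$ as the $(n-k+1)$-dimensional Fourier transform $\widehat{k_{\boldsymbol\theta}}$ in $(\mathbf w',r)$ evaluated at $(-2\pp\xi,\xi)$, so that
\[
G(\boldsymbol\theta,\pp,\xi)=\frac{e^{-i|\pp|^2\xi}}{2|S^{n-k}|}\intL_{\boldsymbol\theta^\perp}\widehat{k_{\boldsymbol\theta}}(\boldsymbol\tau,-2\pp\xi,\xi)\,d\boldsymbol\tau .
\]
On the other hand $\intL_{\boldsymbol\theta^\perp}f(\boldsymbol\tau+s\boldsymbol\theta,\zz)\,d\boldsymbol\tau=(R-s)\intL_{\boldsymbol\theta^\perp}k_{\boldsymbol\theta}(\boldsymbol\tau,\zz,|\zz|^2+(R-s)^2)\,d\boldsymbol\tau$ for $s\le R$. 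So I invert the $(n-k+1)$-dimensional Fourier transform in $(\mathbf w',r)$, substitute $\pp=-\boldsymbol\eta/(2\xi)$ to re-express $\widehat{k_{\boldsymbol\theta}}$ through $G$, put $r=|\zz|^2+(R-s)^2$, and rescale $\boldsymbol\eta\mapsto 2\xi\boldsymbol\eta$ (Jacobian a power of $2\xi$); unwinding $G$ back into $r^{\,n-k}R_{C_{n,k}}f\,e^{-ir^2\xi}$ and collecting constants gives the stated identity. The restriction $s\le R$ is harmless: for $s>R$ the hyperplane $\{x\cdot\boldsymbol\theta=s\}$ misses $B^k_R$, and the values $s\le R$ already determine $\int_{\boldsymbol\theta^\perp}f(\boldsymbol\tau+s\boldsymbol\theta,\zz)\,d\boldsymbol\tau$ for all $s$ via the symmetry of the $k$-dimensional Radon transform.

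In contrast to Theorems~\ref{thm:inversioncylindernd}--\ref{thm:generalcylinder}, this argument brings in nothing genuinely new: it uses no Bessel identity, only polar coordinates, the quadratic substitution, and Fourier inversion. The main work is therefore the bookkeeping of the ingredients that are invisible when $n-k=1$ — the radial weight $r^{\,n-k}$, the Jacobian $(2\xi)^{n-k}$ of the final rescaling together with the sign of $\xi$ (handled by splitting $\{\xi>0\}$ and $\{\xi<0\}$, as in the three-dimensional case), and the powers of $2\pi$ from the $(n-k+1)$-dimensional Fourier inversion — with the Fourier-transform sign conventions taken to be those of the proof of Theorem~\ref{thm:reddingandradon}. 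The one genuine analytic point, already present there, is that $e^{-ir^2\xi}$ is not absolutely integrable, so the Fubini interchanges and the Fourier inversion must be read as oscillatory-integral identities, or justified by inserting a regulator $e^{-\varepsilon r^2}$ and letting $\varepsilon\to0^+$; this is legitimate because $f\in C^\infty_c(B^k_R\times\RR^{n-k})$.
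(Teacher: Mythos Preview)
Your approach is precisely what the paper intends: it offers no proof for this statement beyond the remark that the argument is ``similar to Theorem~\ref{thm:reddingandradon}'', and you have carried out exactly that generalization, with the same three moves (polar-to-Cartesian, the quadratic substitution $r=w_1^2+|\mathbf w'|^2$, and Fourier inversion followed by the rescaling $\boldsymbol\eta\mapsto2\xi\boldsymbol\eta$).

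One bookkeeping caveat. Your choice of radial weight $r^{\,n-k}$ in $G$ is correct --- it is the polar Jacobian on $\RR^{n-k+1}$ --- and the final rescaling $\boldsymbol\eta\mapsto2\xi\boldsymbol\eta$ on $\RR^{n-k}$ then contributes a factor $(2\xi)^{n-k}$. Consequently the identity your argument actually produces carries factors $r^{\,n-k}$ and $\xi^{\,n-k}$ (and prefactor $2^{\,n-k+1}|S^{n-k}|/(2\pi)^{n-k+1}$), not the bare $r$, $\xi$, and $4|S^{n-k}|/(2\pi)^{n-k+1}$ printed in the displayed statement. All three agree when $n-k=1$, so check your constants against Theorem~\ref{thm:reddingandradon} rather than against the displayed $n$-dimensional formula; your claim that ``collecting constants gives the stated identity'' will not survive that check as written.
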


As in section~\ref{defiandworkcylinder}, we can obtain a stability estimate, a support theorem and necessary range conditions for $R_{C_{n,k}}$.
\begin{theorem}\label{rmk:norm}
We have
\begin{equation*}\label{eq:estimate}
||f||_{\gamma}\leq 2|S^{n-k}|(2\pi)^{-n+k-1}\ ||R_{C_{n,k}}f||_{\gamma+(n-1)/2},
\end{equation*}
for $f\in \mathcal H^{\gamma}(\RR^n)$ supported in $B^{k}_R\times\RR^{n-k}$.
\end{theorem}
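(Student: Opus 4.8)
The plan is to mimic the proof of Theorem~\ref{thm:norm} step by step, replacing the Hankel-transform identity~\eqref{eq:hatandtildecylinder} with its $(n-k+1)$-dimensional analogue. First I would record, from the chain of computations leading to Theorem~\ref{thm:inversioncylindernd} (in particular the analogue of equation~\eqref{eq:relationhankelandbackcylinder} obtained via the radial Fourier transform~\eqref{eq:radialfourier}), the identity
\begin{equation*}
\widehat{R^*_{C_{n,k}}g}(\boldsymbol\theta,\boldsymbol\xi,\sigma)=c\,|(\boldsymbol\xi,\sigma)|^{k-n+1}\,\tilde g(\boldsymbol\theta,\boldsymbol\xi,|(\boldsymbol\xi,\sigma)|)
\end{equation*}
for an explicit constant $c$ depending only on $n,k$, where $\tilde g$ is the transform defined just before Theorem~\ref{thm:norm}. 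Feeding this into the Fourier slice theorem of Theorem~\ref{thm:inversioncylindernd} gives a clean pointwise relation
\begin{equation*}
\hat f(\sigma\boldsymbol\theta,\boldsymbol\xi)=2|S^{n-k}|(2\pi)^{-n+k-1}\,\tilde g(\boldsymbol\theta,\boldsymbol\xi,|(\boldsymbol\xi,\sigma)|)\,e^{-iR\sigma}\,|\sigma|,
\end{equation*}
which is the $n$-dimensional counterpart of the boxed identity $\hat f(\sigma\boldsymbol\theta,\xi)=4\pi^{-1}\tilde g(\boldsymbol\theta,\xi,|(\xi,\sigma)|)e^{-iR\sigma}|\sigma|$ used in the $3$-dimensional case. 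The powers of $|(\boldsymbol\xi,\sigma)|$ from the two identities should cancel; this cancellation is exactly why the constant in the final estimate is the same $2|S^{n-k}|(2\pi)^{-n+k-1}$ appearing in Theorem~\ref{thm:inversioncylindernd}.

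Next I would write out $\|f\|_\gamma^2$ using polar coordinates in the first $k$ variables of the Fourier domain, $\boldsymbol\iota=\sigma\boldsymbol\theta$ with $\sigma\ge 0$, $\boldsymbol\theta\in S^{k-1}$, so that $d\boldsymbol\iota=\sigma^{k-1}d\sigma\,dS(\boldsymbol\theta)$:
\begin{equation*}
\|f\|_\gamma^2=\intL_{S^{k-1}}\intL_{\RR^{n-k}}\intL\half (1+|\sigma|^2+|\boldsymbol\xi|^2)^\gamma|\hat f(\sigma\boldsymbol\theta,\boldsymbol\xi)|^2\sigma^{k-1}\,d\sigma\,d\boldsymbol\xi\,dS(\boldsymbol\theta).
\end{equation*}
Substituting the pointwise relation turns the integrand into $(2|S^{n-k}|(2\pi)^{-n+k-1})^2(1+|(\boldsymbol\xi,\sigma)|^2)^\gamma|\tilde g(\boldsymbol\theta,\boldsymbol\xi,|(\boldsymbol\xi,\sigma)|)|^2\sigma^{k+1}$. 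Then I bound $\sigma^{k+1}\le |(\boldsymbol\xi,\sigma)|^{k}\cdot\sigma\le|(\boldsymbol\xi,\sigma)|^{n-1}\cdot\sigma$, so that $(1+|(\boldsymbol\xi,\sigma)|^2)^\gamma\sigma^{k+1}\le(1+|(\boldsymbol\xi,\sigma)|^2)^{\gamma+(n-1)/2}\sigma$; after the change of variable $\eta=|(\boldsymbol\xi,\sigma)|$ (with $\sigma\,d\sigma=\eta\,d\eta$ and $\eta$ running from $|\boldsymbol\xi|$ to $\infty$, together with the weight $\eta^{n-k-1}$ produced by the remaining angular directions implicit in $\tilde g$) the right-hand side becomes exactly $(2|S^{n-k}|(2\pi)^{-n+k-1})^2\|R_{C_{n,k}}f\|_{\gamma+(n-1)/2}^2$, using the definition of $\|\cdot\|_\gamma$ on $S^{k-1}\times\RR^{n-k}\times[0,\infty)$.

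The main obstacle is getting the weights and the power of $|(\boldsymbol\xi,\sigma)|$ exactly right: I need to verify carefully that the identity~\eqref{eq:batemannn} (with the appropriate $\mu=(n-k-2)/2+\cdots$, $\nu$ chosen to match the radial Fourier transform) produces the factor $|(\boldsymbol\xi,\sigma)|^{k-n+1}$ and no extraneous $\boldsymbol\xi$- or $\sigma$-dependent constants, and that the Jacobian bookkeeping in passing from the $(\sigma,\boldsymbol\xi)$ measure with weight $\sigma^{k+1}$ to the $(\eta,\boldsymbol\xi)$ measure with weight $\eta^{n-k-1}\eta$ is consistent with the definition of the norm $\|\cdot\|_{\gamma+(n-1)/2}$ via $\tilde g$. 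Once the $k=2$, $n=3$ specialization is checked against Theorem~\ref{thm:norm} (where $\sigma^{k+1}=\sigma^3$, $(n-1)/2=1$, and $2|S^{1}|(2\pi)^{-2}=4\pi^{-1}$), I am confident the general bookkeeping goes through; everything else is the routine estimate $\sigma^{k+1}\le\eta^{n-1}\sigma$ that exploits $f$ being supported in $B^k_R\times\RR^{n-k}$ only implicitly (the support enters through the earlier theorems, not the estimate itself).
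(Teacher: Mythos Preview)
Your overall strategy is exactly the one the paper has in mind (the paper gives no separate proof, merely saying ``as in section~\ref{defiandworkcylinder}''): combine the $(n,k)$ Fourier slice identity of Theorem~\ref{thm:inversioncylindernd} with the relation between $\widehat{R^*_{C_{n,k}}g}$ and $\tilde g$, then estimate in polar coordinates and change $|(\boldsymbol\xi,\sigma)|\to\eta$. However, one intermediate step is wrong and this propagates through your bookkeeping.

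The relation between $\widehat{R^*_{C_{n,k}}g}$ and $\tilde g$ carries \emph{no} power of $|(\boldsymbol\xi,\sigma)|$. Exactly as in~\eqref{eq:hatandtildecylinder}, a translation $\boldsymbol\zeta\mapsto\boldsymbol\zeta-\mathbf p$ turns $\widehat{R^*_{C_{n,k}}g}(\boldsymbol\theta,\boldsymbol\xi,\sigma)$ into the $(n-k+1)$-dimensional Fourier transform of the radial function $\mathbf w\mapsto g(\boldsymbol\theta,\mathbf p,|\mathbf w|)$ evaluated at $(\boldsymbol\xi,\sigma)$, followed by the $\mathbf p$-Fourier transform; this is precisely the definition of $\tilde g(\boldsymbol\theta,\boldsymbol\xi,|(\boldsymbol\xi,\sigma)|)$. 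Hence the correct pointwise identity is
\[
\hat f(\sigma\boldsymbol\theta,\boldsymbol\xi)=2|S^{n-k}|(2\pi)^{-n+k-1}\,\tilde g(\boldsymbol\theta,\boldsymbol\xi,|(\boldsymbol\xi,\sigma)|)\,e^{-iR\sigma}\,|(\boldsymbol\xi,\sigma)|^{n-k-1}|\sigma|,
\]
and the factor $|(\boldsymbol\xi,\sigma)|^{n-k-1}$ from Theorem~\ref{thm:inversioncylindernd} does \emph{not} cancel. With this correction the integrand in $\|f\|_\gamma^2$ carries $|(\boldsymbol\xi,\sigma)|^{2(n-k-1)}\sigma^{k+1}$; bounding $\sigma^{k}\le|(\boldsymbol\xi,\sigma)|^{k}$ and setting $\eta=|(\boldsymbol\xi,\sigma)|$ yields the weight $\eta^{2n-k-2}\,\eta\,d\eta=\eta^{n-1}\,\eta^{n-k}\,d\eta\le(1+\eta^2)^{(n-1)/2}\eta^{n-k}\,d\eta$, which is exactly what the norm $\|\cdot\|_{\gamma+(n-1)/2}$ requires. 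Your bound $\sigma^{k+1}\le\eta^{n-1}\sigma$ and the remark about ``remaining angular directions implicit in $\tilde g$'' are symptoms of the missing $|(\boldsymbol\xi,\sigma)|^{n-k-1}$; once that factor is restored, no extra angular weight is needed and the powers match the definition of $\|g\|_\gamma$ on the nose.
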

\begin{theorem}
Let $\pp_0\in\RR^{n-k}$ and $B>0$.
Let $f\in C^\infty(B^k_R\times\RR^{n-k})$ and suppose that $g=R_{C_{n,k}}f$ is equal to zero on the open set $U_{B}=\{(\boldsymbol\theta,\pp_0,r):0\leq\boldsymbol\theta<2\pi,0\leq r<B\}$. Then $f$ is equal to zero on the set $\{(\xx,\zz)\in\RR^k\times\RR^{n-k}:|x|> R-\sqrt{B^2-|\zz-\pp_0|^2}\}$.
\end{theorem}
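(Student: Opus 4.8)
The plan is to follow the three-dimensional argument of Theorem~\ref{thm:support} in spirit, the only substantial new ingredient being an $n$-dimensional analogue of Lemma~\ref{lem:unique} that reduces the statement to the regular $k$-dimensional Radon transform in the $\xx$-variables. For a fixed $\boldsymbol\theta\in S^{k-1}$ write
$$
\mathcal R_{\boldsymbol\theta}f(\zz,s)=\intL_{\boldsymbol\theta^\perp}f(\boldsymbol\tau+s\boldsymbol\theta,\zz)\,d\boldsymbol\tau
$$
for the $k$-dimensional Radon transform, with $\zz\in\RR^{n-k}$ as a parameter. The lemma I would prove is: if $g=R_{C_{n,k}}f$ vanishes on $U_{B,\epsilon}=\{(\boldsymbol\theta,\pp,r):|\pp-\pp_0|<\epsilon,\ 0\leq r<B\}$, then $\mathcal R_{\boldsymbol\theta}f$ vanishes on $V_B=\{(\zz,s):|\zz-\pp_0|^2+(R-s)^2<B^2\}$.

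To prove the lemma I would use the moment trick of~\cite{andersson88}, exactly as in Lemma~\ref{lem:unique}. Fix $\boldsymbol\theta$ and set $G(\pp,r)=\intL_0^r g(\boldsymbol\theta,\pp,s)\,s^{n-k}\,ds$. Passing to polar coordinates $\boldsymbol\eta=(\eta_1,\boldsymbol\eta')\in\RR\times\RR^{n-k}=\RR^{n-k+1}$ converts the $S^{n-k}$-average into a solid-ball average,
$$
G(\pp,r)=\frac1{|S^{n-k}|}\intL_{\boldsymbol\theta^\perp}\intL_{|\boldsymbol\eta|\leq r}f(\boldsymbol\tau+(R-\eta_1)\boldsymbol\theta,\pp+\boldsymbol\eta')\,d\boldsymbol\eta\,d\boldsymbol\tau .
$$
Differentiating in $p_j$ and applying the divergence theorem on the ball $\{|\boldsymbol\eta|\leq r\}\subset\RR^{n-k+1}$ gives $\partial_{p_j}G=|S^{n-k}|^{-1}r^{-1}\intL_{\boldsymbol\theta^\perp}\intL_{|\boldsymbol\eta|=r}\eta'_j f\,dS(\boldsymbol\eta)\,d\boldsymbol\tau$, and comparing this with the defining integral of $R_{C_{n,k}}(z_jf)$ yields the intertwining relation
$$
R_{C_{n,k}}(z_jf)(\boldsymbol\theta,\pp,r)=p_j\,g(\boldsymbol\theta,\pp,r)+r^{\,1-n+k}\,\partial_{p_j}\intL_0^r g(\boldsymbol\theta,\pp,s)\,s^{n-k}\,ds=:D_jg(\boldsymbol\theta,\pp,r),
$$
which for $n-k=1$ is precisely the operator $D$ of Lemma~\ref{lem:unique}. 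One checks that the right-hand side extends smoothly across $r=0$ and that each $D_j$ maps functions vanishing on $U_{B,\epsilon}$ to functions vanishing on $U_{B,\epsilon}$; iterating (the ordering of the $D_j$'s being immaterial, since any order composes to $R_{C_{n,k}}$ applied to the corresponding monomial) one gets $R_{C_{n,k}}(\mathcal P(\zz)f)=\mathcal P(D_1,\dots,D_{n-k})g$ for every polynomial $\mathcal P$ in $n-k$ variables, and this too vanishes on $U_{B,\epsilon}$.

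Next I would evaluate $R_{C_{n,k}}(\mathcal P(\zz)f)$ at an arbitrary $(\boldsymbol\theta,\pp,r)\in U_{B,\epsilon}$ straight from the definition. Since $f$ is supported in $B^k_R\times\RR^{n-k}$, only the hemisphere $\alpha_1>0$ contributes; parametrizing it by $\boldsymbol\alpha'\in B^{n-k}_1$ with $\alpha_1=\sqrt{1-|\boldsymbol\alpha'|^2}$ and substituting $\boldsymbol\eta'=r\boldsymbol\alpha'$ turns $R_{C_{n,k}}(\mathcal P(\zz)f)=0$ into
$$
\intL_{|\boldsymbol\eta'|<r}\mathcal P(\pp+\boldsymbol\eta')\,\mathcal R_{\boldsymbol\theta}f\bigl(\pp+\boldsymbol\eta',\,R-\sqrt{r^2-|\boldsymbol\eta'|^2}\bigr)\frac{d\boldsymbol\eta'}{\sqrt{r^2-|\boldsymbol\eta'|^2}}=0
$$
for every $\mathcal P$. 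By the Stone--Weierstrass theorem the functions $\boldsymbol\eta'\mapsto\mathcal P(\pp+\boldsymbol\eta')$ are uniformly dense in $C(\overline{B^{n-k}_r})$, so the continuous function $\boldsymbol\eta'\mapsto\mathcal R_{\boldsymbol\theta}f(\pp+\boldsymbol\eta',R-\sqrt{r^2-|\boldsymbol\eta'|^2})$ must vanish on $\overline{B^{n-k}_r}$; letting $(\boldsymbol\theta,\pp,r)$ and $\boldsymbol\eta'$ range over all admissible values gives $\mathcal R_{\boldsymbol\theta}f\equiv0$ on $V_B$, which is the lemma.

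Finally one concludes exactly as in Theorem~\ref{thm:support}. For each $\epsilon>0$, $g$ vanishes on $U_{B-\epsilon,\epsilon}$, so $\mathcal R_{\boldsymbol\theta}f$ vanishes on $V_{B-\epsilon}$ for every $\boldsymbol\theta\in S^{k-1}$; fixing $\zz$ with $|\zz-\pp_0|<B-\epsilon$ and putting $\beta_\epsilon=\sqrt{(B-\epsilon)^2-|\zz-\pp_0|^2}$, we get $\mathcal R_{\boldsymbol\theta}f(\zz,s)=0$ for $R-\beta_\epsilon<s<R+\beta_\epsilon$, and also for $|s|\geq R$ since $f(\cdot,\zz)$ is supported in $\overline{B^k_R}$. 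Using $\mathcal R_{\boldsymbol\theta}f(\zz,s)=\mathcal R_{-\boldsymbol\theta}f(\zz,-s)$ together with the fact that this holds for all $\boldsymbol\theta$, the $k$-dimensional Radon transform of $f(\cdot,\zz)\in C^\infty_c(\RR^k)$ annihilates every hyperplane at distance $>R-\beta_\epsilon$ from the origin, so Helgason's support theorem~\cite{helgason99radon,natterer01} and then $\epsilon\to0$ give $f(\xx,\zz)=0$ for $|\xx|>R-\sqrt{B^2-|\zz-\pp_0|^2}$ (when $|\zz-\pp_0|\geq B$ this is vacuous, $f$ already vanishing for $|\xx|>R$). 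The step I expect to require the most care is the higher-dimensional moment computation: deriving $D_j$ (whose $r^{\,1-n+k}$ weight comes from the $(n-k)$-dimensional sphere) via the divergence theorem in $\RR^{n-k+1}$, checking it is well defined across $r=0$, and verifying it preserves vanishing on $U_{B,\epsilon}$; everything else is a routine transcription of the three-dimensional argument.
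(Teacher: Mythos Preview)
Your proposal is correct and follows precisely the approach the paper intends: the paper does not spell out a proof of this $n$-dimensional statement, merely indicating that it is obtained ``as in section~\ref{defiandworkcylinder}'', i.e., by generalizing Lemma~\ref{lem:unique} and Theorem~\ref{thm:support}. You have carried out exactly that generalization --- the $(n-k)$-variable moment operators $D_j$ with the $r^{\,1-n+k}$ weight, the divergence theorem on the $(n-k+1)$-ball, Stone--Weierstrass in $n-k$ variables, and Helgason's support theorem for the $k$-dimensional Radon transform --- and your identification of the $D_j$ computation as the only genuinely new step is accurate.
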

 \begin{remark}
We can obtain the same result as Theorem~\ref{thm:necessaryrange} for an $n$-dimensional case using Theorem~\ref{thm:inversioncylindernd} instead of Theorem~\ref{thm:inversioncylinder3d}.
 \end{remark}
\section{Planar geometry}\label{plane}
Let us first explain the mathematical model arising in PAT with line detectors introduced in~\cite{haltmeier09}.
Let $L_P(\boldsymbol\theta,p)=\{(0,y,z)\in\RR^3:(y,z)\cdot\boldsymbol\theta=p\}$ for $p>0$ and $ \boldsymbol\theta\in S^1$ be the location of a line detector.
Then we have $L_P(\boldsymbol\theta,p)=L_P(-\boldsymbol\theta,-p)$ and the detector line $L_P(\boldsymbol\theta,p)$ is located on the $yz$-plane.
\begin{definition}
Let a function $f$ be even in $x$.
The cylindrical Radon transform $R_P$ maps $f\in C^\infty_c(\RR^3)$ into 
 $$
 R_Pf(\boldsymbol\theta,p,r)=\displaystyle\frac{1}{2\pi r}\iint\limits_{d(L_P(\boldsymbol\theta,p),(x,y,z))=r} f(x,y,z)d\varpi,
 $$
 for $(\boldsymbol\theta,p,r)\in S^1\times\RR\times[0,\infty)$.
Here $d\varpi$ is the area measure on the cylinder 
$$
\{(x,y,z)\in\RR^3:d(L_P(\boldsymbol\theta,p),(x,y,z))=r\},
$$ 
and 
$$
d(L_P(\boldsymbol\theta,p),(x,y,z)):=\sqrt{x^2+(p-(y,z)\cdot\boldsymbol\theta)^2}
$$
 denotes the Euclidean distance between the line $L_P(\boldsymbol\theta,p)$ and the point $(x,y,z)$.
\end{definition}
We notice that if a function $f$ is odd in $x$, then $R_Pf$ is equal to zero.
\begin{remark}\label{rmk:evenness}
We have $R_Pf(\boldsymbol\theta,p,r)=R_Pf(-\boldsymbol\theta,-p,r)$.
\end{remark}
By definition, we have
$$
R_Pf(\boldsymbol\theta,p,r)=\displaystyle\frac{1}{2\pi}\intR\int\limits^{\pi}_{-\pi}f(r\cos\psi, t\boldsymbol\theta^\perp+(p-r\sin\psi)\boldsymbol\theta)d\psi dt,
$$
where $r$ is the radius of the cylinder of integration, $p$ and $\boldsymbol\theta$ are the distance and the direction from the origin to the central axis of the cylinder, $t$ is a parameter variable of the central axis of the cylinder, and $\psi$ is the polar angle of the circle that is the intersection of a plane $\{(x,t\boldsymbol\theta):t\in\RR,x\in\RR\}$ and the cylinder.

\begin{figure}
\begin{center}
  \begin{tikzpicture}[>=stealth,scale=1]
    \draw[thin,->] (0,3) -- (0,4);
    \draw[thin,dotted] (0,1) -- (0,3);
    \draw[thin] (0,0) -- (0,1);
    \draw[thin,->] (3,0) -- (4,0);
    \draw[thin,dotted] (1,0) -- (3,0);
    \draw[thin] (0,0) -- (1,0);
    \draw[thin,->] (0,0) -- (-1.8,-1);
    \draw[densely dashed] (3.8,-1.8) -- (-1.5,3.5);
    \draw[very thick] (3.5,-0.5) -- (-0.5,3.5);
    \draw[very thick] (2.5,-1.5) -- (-1.5,2.5);
    \draw[very thick,rotate around={45:(3,-1)}] (3,-1) ellipse (1.4141/2 and 0.35) ;
    \draw[very thick,rotate around={45:(-1.5,2.5)},dashed] (-1.5,2.5) arc (180:360:1.4141/2 and 0.35) ;
    \draw[very thick,rotate around={45:(-.5,3.5)}] (-0.5,3.5) arc (0:180:1.4141/2 and 0.35) ;
    \draw[<->,rotate around={45:(0,0)},loosely dashed] (0,0) arc (180:360:1.4141/2 and 0.2) ;
    \draw[->] (0,0) -- (1,1);
    \node at (0.5,-2.8) {(a)};
    \node at (4.1,-2.2) {$L_P(\boldsymbol\theta,p)$};
    \node at (0.1,4.2) {$z$};
    \node at (0.7,1) {$\boldsymbol\theta$};
    \node at (0.5,0.1) {$p$};
    \node at (-1.5,-1.7) {$x$};
    \node at (4,0.2) {$y$};
    \draw[thin,->] (8,-2) -- (8,4);
    \draw[thin,->] (5.5,-1) -- (10.5,-1);
    \draw[<->,loosely dashed] (8,1) arc (90:270:0.4 and 1) ;  
    \draw[very thick] (8,1) circle (1.5);
    \draw[<->,loosely dashed] (9.5,1) -- (8,1);    
    \draw[densely dashed] (8+1.732*1.5/2,1+1.5/2) -- (8,1);    
    \draw[very thick,rotate around={-45:(8.4,1)}] (8.4,1) arc (50:80:0.4 and 0.4) ;
    \node at (8,-2.8) {(b)};
    \node at (8.2,3.7) {$\boldsymbol\theta$};
    \node at (8,-2.8) {(b)};
    \node at (7.5,.4) {$p$};
    \node at (8.5,0.7) {$r$};
    \node at (8.6,1.2) {$\psi$};
    \node at (10.5,-1.2) {$x$};
  \end{tikzpicture}

\end{center}
\caption{(a) the cylinder of integration whose the central axis is located on the $yz$-plane and (b) the restriction to the $\{(x,t\boldsymbol\theta):x\in\RR,t\in\RR\}$ plane}
\label{fig:cylinderplane}
\end{figure}
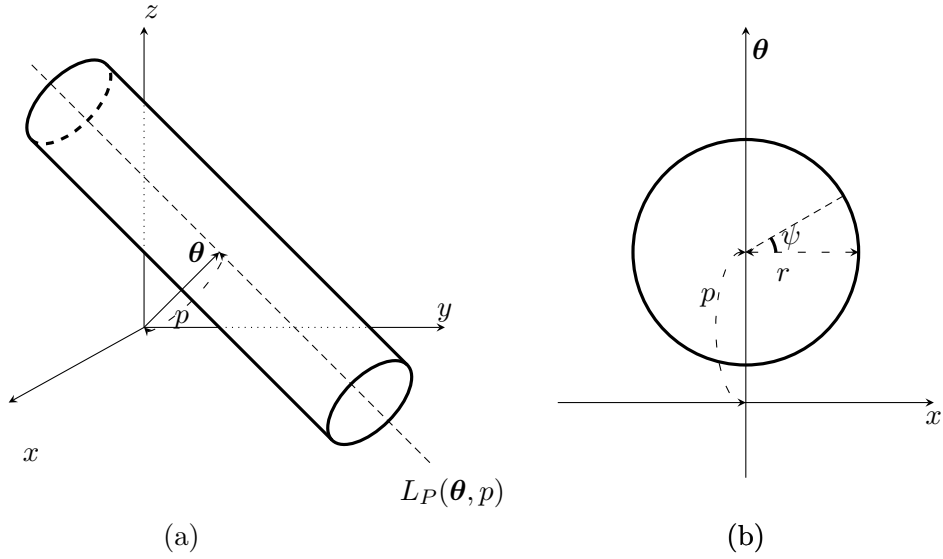

\subsection{Inversion formulas}\label{reconplane}
We have two integrals in the definition formula of $R_Pf$. 
Like $R_Cf$, the inner integral is a circular Radon transform with centers on the line for fixed $\boldsymbol\theta$, and the outer integral can be thought of as the $2$-dimensional regular Radon transform for a fixed $x$-coordinate~\cite{haltmeier09}.
Similarly, we start to apply the inversion of the circular Radon transform for a fixed $\boldsymbol\theta$.

\begin{theorem}\label{thm:inversionplane}
Let $f\in C^\infty_c(\RR^3)$ be even in $x$.
If $g=R_Pf$, then we have
\begin{equation}\label{eq:invplane3d}
\hat{f}(\xi,\sigma \boldsymbol\theta) =4 |\xi| \widehat{R^*_Pg}(\boldsymbol\theta, \sigma,\xi),
\end{equation}
where $\hat f$ is the $3$-dimensional Fourier transform of $f$  with respect to $(x,y,z)\in\RR^3$ and $\widehat{R^*_Pg}$ is the 2-dimensional Fourier transforms of $R^*_Pg:=R^*_Cg$ and $(\zeta,\rho)\in\RR^2$ (see Theorem~\ref{thm:inversioncylinder3d}).
\end{theorem}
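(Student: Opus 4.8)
The plan is to mimic the proof of Theorem~\ref{thm:inversioncylinder3d}, but now the geometry is reversed: in the planar version the distinguished variable that plays the role of the "axial" parameter is $x$ (the function is even in $x$), while the detector line lies in the $yz$-plane along direction $\boldsymbol\theta^\perp$. First I would take the Fourier transform of $R_Pf(\boldsymbol\theta,p,r)$ with respect to the central-axis parameter; writing the integrand $f(r\cos\psi,\,t\boldsymbol\theta^\perp+(p-r\sin\psi)\boldsymbol\theta)$ and Fourier transforming in $p$ produces $\hat f(r\cos\psi,\,t\boldsymbol\theta^\perp+(p-r\sin\psi)\boldsymbol\theta)$ but now with the outer integral in $t$ still present. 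Parametrizing the circle by $\cos\psi=s$ as in the cylindrical case, one gets a representation of $\widehat{R_Pf}(\boldsymbol\theta,\sigma,r)$ as an integral against $e^{-irs\sigma}/\sqrt{1-s^2}$ where $\sigma$ is dual to $p$, with the Fourier transform of $f$ taken in the first ($x$) slot evaluated at argument $rs$ (coming from $r\cos\psi$), and with the transverse $(y,z)$ variables still in physical space along $t\boldsymbol\theta^\perp$.

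Next I would apply the Hankel transform of order zero in $r$ exactly as in equation~\eqref{eq:hankelrcfcylinder}, change variables $(r,s)\to(\rho,b)$ with $r=\sqrt{\rho^2+b^2}$, $s=\rho/\sqrt{\rho^2+b^2}$, and invoke the Bateman identity~\eqref{eq:batemann2}. The only bookkeeping difference is that here the "$b$" variable becomes the Fourier-conjugate of the $x$-coordinate rather than a spatial shift along $\boldsymbol\theta$, so after the identity is applied and $\eta$ is replaced by $\sqrt{\xi^2+\sigma^2}$ (with $\xi$ now dual to $x$), the surviving cosine transform in $b$ is inverted against $\hat f(\cdot)$ evaluated at spatial frequency $\xi$ in the $x$-direction. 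This yields the analogue of~\eqref{relationhankelandfouriercylinder}: the two-dimensional Radon transform of $f(\cdot,\cdot,\cdot)$ in the $(y,z)$-plane (for frozen $x$-frequency $\xi$) expressed through $H_0\widehat{R_Pf}$. Then, using that $R^*_Pg:=R^*_Cg$ and repeating the computation of equation~\eqref{eq:relationhankelandbackcylinder} verbatim, one rewrites the right-hand side in terms of $\widehat{R^*_Pg}(\boldsymbol\theta,\sigma,\xi)$; finally applying the Fourier slice theorem for the regular $2$-dimensional Radon transform in the $(y,z)$-plane (Remark~\ref{rmk:fourierslicecylinder}) converts the left-hand side into $\hat f(\xi,\sigma\boldsymbol\theta)$. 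Collecting the constants — a factor $4$ from inverting the cosine transform against an even function, no exponential phase because here the detector line passes through the origin rather than being offset by $R$, and the Jacobian factor $|\xi|$ replacing $|\sigma|$ because the roles of the "circular-Radon radial" frequency and the "regular-Radon" frequency have been exchanged — gives exactly~\eqref{eq:invplane3d}.

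The main obstacle I anticipate is purely organizational: keeping straight which scalar frequency is dual to which physical variable after the $(r,s)\to(\rho,b)$ substitution and the Bateman identity, since in this version the variable that is Fourier-transformed from the start ($p$) and the variable that only becomes a frequency through the Hankel/Bateman step ($x$) are interchanged relative to Theorem~\ref{thm:inversioncylinder3d}. One must also check that evenness of $f$ in $x$ is exactly what is needed for the cosine transform (rather than a full exponential transform) in the $b=\text{``}x\text{-frequency''}$ variable to be valid, and that this is what produces the clean factor $4|\xi|$ with no phase term. Once the dictionary $x\leftrightarrow\xi$, $p\leftrightarrow\sigma$ is fixed, every individual step is a transcription of the corresponding step in the proof of Theorem~\ref{thm:inversioncylinder3d}, so no genuinely new estimate or identity is required.
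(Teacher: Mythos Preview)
Your approach is essentially the paper's: Fourier transform in $p$, Hankel transform in $r$, the change of variables $(r,s)\to(\rho,b)$, the Bateman identity~\eqref{eq:batemann2}, and the identification with $\widehat{R^*_Pg}$ via the computation of~\eqref{eq:relationhankelandbackcylinder}. The only difference is that the paper applies the Fourier slice theorem for the $(y,z)$-Radon transform \emph{immediately} when taking the Fourier transform in $p$ (so that $\hat f(\,\cdot\,,\sigma\boldsymbol\theta)$, with the $2$D Fourier in $(y,z)$ already performed, appears from the outset and the cosine transform in $b$ at the end produces the remaining $x$-Fourier transform via evenness), rather than carrying the $t$-integral through and applying Fourier slice at the end --- this ordering dissolves exactly the bookkeeping ambiguity you flagged.
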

Notice that the evenness of $g$ in $(\boldsymbol\theta,p)$ implies the evenness of $\widehat{R^*_Pg}$ in $(\boldsymbol\theta,\sigma)$.
\begin{remark}\label{rmk:fouriersliceplane}
Theorem~\ref{thm:inversionplane} can be thought of as the combination of two Fourier slice theorems for the circular and regular Radon transforms.
\end{remark}
\begin{proof}
By definition, $R_Pf$ can be represented by
$$
R_Pf(\boldsymbol\theta,p,r)=\displaystyle\frac{1}{2\pi}\intR\int\limits^{1}_{-1}f(r\sqrt{1-s^2}, t\boldsymbol\theta^\perp+(p-rs)\boldsymbol\theta)\frac{ds}{\sqrt{1-s^2}} dt.
$$
Taking the Fourier transform of $R_Pf$ with respect to $p$ yields 
$$
\begin{array}{ll}
\widehat{R_Pf}(\boldsymbol\theta,\sigma,r)&\displaystyle=\frac{1}{\pi }\int\limits^1_{-1}\hat{f}(r\sqrt{1-s^2},\sigma \boldsymbol\theta) e^{irs\sigma}\frac{ds}{\sqrt{1-s^2}},
\end{array}
$$
where $\hat f$ and $\widehat{R_Pf}$ are the Fourier transforms of $f$ and $R_Pf$ with respect to $(y,z)\in\RR^{2}$ and $p\in\RR$, respectively.
Taking the Hankel transform $H_0$ of $\widehat{R_Pf}$ with respect to $r$, we have 
\begin{equation}\label{eq:hankelrcf}
\begin{array}{ll}
H_0\widehat{R_Pf}(\boldsymbol\theta,\sigma,\eta)&=\displaystyle\frac{1}{\pi }\int\limits\half \int\limits^1_{-1}\hat{f}(r\sqrt{1-s^2},\sigma \boldsymbol\theta) e^{irs\sigma}\frac{ds}{\sqrt{1-s^2}}J_0(r\eta)rdr\\
   &=\displaystyle\frac{2}{\pi }\int\limits\half\int\limits^1_{-1}\hat{f}(r\sqrt{1-s^2},\sigma \boldsymbol\theta)J_0(r\eta)r \cos(rs\sigma)\frac{ds}{\sqrt{1-s^2}} dr\\
      &=\displaystyle\frac{1}{2\pi }\int\limits\half\int\limits\half\hat{f}(b,\sigma \boldsymbol\theta)\cos(\rho\sigma) J_0(\eta\sqrt{\rho^2+b^2})d\rho db,
  \end{array}
  \end{equation}
  where in the last line, we made a change of variables $(r,s)\rightarrow (\rho,b)$ where $r=\sqrt{\rho^2+b^2}$ and $s=\rho/\sqrt{\rho^2+b^2}$.
Applying the identity~\eqref{eq:batemann2} to equation~\eqref{eq:hankelrcf}, we get 
\begin{equation*}
H_0\widehat{R_Pf}(\boldsymbol\theta,\sigma,\eta)=\left\{\begin{array}{ll}\displaystyle\frac{2}{\pi }\int\limits\half\hat{f}(b,\sigma \boldsymbol\theta)\dfrac{\cos(b\sqrt{\eta^2-\sigma^2})}{\sqrt{\eta^2-\sigma^2}}db\;&\mbox{ if } 0<\sigma<\eta,\\
0&\mbox{ otherwise.}\end{array}\right.
 \end{equation*}
Substituting $\eta=\sqrt{\xi^2+\sigma^2}$ yields
\begin{equation}\label{eq:relationhankelandfourier}
H_0\widehat{R_Pf}(\boldsymbol\theta,\sigma,|(\sigma,\xi)|)=\displaystyle\frac{2}{\pi }\int\limits\half\hat{f}(b,\sigma \boldsymbol\theta)\dfrac{\cos(b\xi)}{\xi}db=\displaystyle\frac{1}{\pi }\hat{f}(\xi,\sigma \boldsymbol\theta)|\xi|^{-1}.
 \end{equation}

As in the proof of Theorem~\ref{thm:inversioncylinder3d}, we change the right side of equation~\eqref{eq:relationhankelandfourier} into a term containing the backprojection operator $R_P ^*$.
We have $\widehat{R^*_Pg}(\boldsymbol\theta, \sigma,\xi)=2\pi H_0\hat{g}(\boldsymbol\theta,\sigma,|(\sigma,\xi)| )$, so we get equation~\eqref{eq:invplane3d}. 
\end{proof}

Let the linear operator $I_\zeta$ and $I_\rho$ be defined by $\widehat{I^{-1}_\zeta h}(\boldsymbol\theta,\sigma,\xi)=|\sigma|\hat{h}(\boldsymbol\theta,\sigma,\xi)$ and $\widehat{I^{-1}_\rho h}(\boldsymbol\theta,\sigma,\xi)=|\xi|\hat{h}(\boldsymbol\theta,\sigma,\xi)$ for a smooth and compactly supported function $h(\boldsymbol\theta,\zeta ,\rho)$ on $S^1\times\RR\times\RR$ with its 2-dimensional Fourier transform $\hat h(\boldsymbol\theta,\sigma,\xi)$ with respect to $(\zeta ,\rho)$.
Then we have the following inversion formula.
\begin{theorem}
 Let $f\in C^\infty_c(\RR^3)$ be even in $x$.
Then we have for $g=R_Pf$
$$
f(x,y,z)=4\pi^{-1}\intL_{S^{1}}I^{-1}_\zeta I^{-1}_\rho R^*_Pg(\boldsymbol\theta,\boldsymbol\theta\cdot (y,z),x)dS(\boldsymbol\theta).
$$
\end{theorem}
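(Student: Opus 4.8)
The plan is to derive this inversion from the planar Fourier slice identity~\eqref{eq:invplane3d} in exactly the way Theorem~\ref{thm:3dcylinder} follows from Theorem~\ref{thm:inversioncylinder3d}. First I would write out the three-dimensional Fourier inversion formula for $f$, keeping Cartesian coordinates for the frequency $\xi$ dual to $x$ but passing to polar coordinates $\sigma\boldsymbol\theta$, with $\sigma\in\RR$ and $\boldsymbol\theta\in S^1$, for the frequency dual to $(y,z)$. Because $(\sigma,\boldsymbol\theta)\mapsto\sigma\boldsymbol\theta$ covers $\RR^2$ twice, with Jacobian $|\sigma|$, this gives
\begin{equation*}
f(x,y,z)=\frac{1}{2(2\pi)^3}\intR\intL_{S^{1}}\intR \hat f(\xi,\sigma\boldsymbol\theta)\,e^{i(x\xi+\sigma\,\boldsymbol\theta\cdot(y,z))}\,|\sigma|\,d\sigma\,dS(\boldsymbol\theta)\,d\xi.
\end{equation*}

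Next I would substitute $\hat f(\xi,\sigma\boldsymbol\theta)=4|\xi|\widehat{R^*_Pg}(\boldsymbol\theta,\sigma,\xi)$ from~\eqref{eq:invplane3d}. The Jacobian factor $|\sigma|$ and the slice-theorem factor $|\xi|$ together turn $\widehat{R^*_Pg}$ into $\widehat{I^{-1}_\zeta I^{-1}_\rho R^*_Pg}$, by the defining relations $\widehat{I^{-1}_\zeta h}(\boldsymbol\theta,\sigma,\xi)=|\sigma|\hat h$ and $\widehat{I^{-1}_\rho h}(\boldsymbol\theta,\sigma,\xi)=|\xi|\hat h$; here $\xi$, which carries the Riesz weight in~\eqref{eq:invplane3d}, is dual to the transversal coordinate $\rho$ (which is $x$), while $\sigma$ is dual to $\zeta=\boldsymbol\theta\cdot(y,z)$. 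What then remains is $\intR\intR \widehat{I^{-1}_\zeta I^{-1}_\rho R^*_Pg}(\boldsymbol\theta,\sigma,\xi)\,e^{i(\sigma\zeta+\xi\rho)}\,d\sigma\,d\xi$, which equals $(2\pi)^2$ times the inverse two-dimensional Fourier transform of $I^{-1}_\zeta I^{-1}_\rho R^*_Pg(\boldsymbol\theta,\cdot,\cdot)$ evaluated at $(\zeta,\rho)=(\boldsymbol\theta\cdot(y,z),x)$. Collecting the constants from the polar substitution and the Fourier normalizations then yields the asserted formula.

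Since the argument is essentially a line-by-line transcription of the cylindrical proof, there is no genuinely hard step; the parts that need care are (i) carrying out the polar substitution with the correct $2$-to-$1$ factor $\tfrac12$ and Jacobian $|\sigma|$, (ii) not interchanging the roles of the two frequency variables --- $\xi$ being paired with $x=\rho$ and $\sigma$ with $\boldsymbol\theta\cdot(y,z)=\zeta$ --- and (iii) keeping track of the Fourier-transform normalizations so that the final constant comes out right. It is also worth noting that the hypothesis ``$f$ even in $x$'' is precisely what makes $R^*_Pg$ even in $\rho$, hence $\widehat{R^*_Pg}$ even in $\xi$, so that $I^{-1}_\rho$ acts consistently on it and the reconstructed function is again even in $x$.
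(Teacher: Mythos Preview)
Your approach is correct and is exactly the derivation the paper has in mind: the theorem is stated without an explicit proof, being the direct consequence of the slice identity~\eqref{eq:invplane3d} obtained in the same way that Theorem~\ref{thm:3dcylinder} is deduced from Theorem~\ref{thm:inversioncylinder3d}. Your care with the polar double cover, the pairing of $\xi\leftrightarrow\rho$ and $\sigma\leftrightarrow\zeta$, and the evenness remark are all on point.
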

Notice that from equation~\eqref{eq:invplane3d}, we have
\begin{equation}\label{eq:radonplane}
\intR f(x,t\boldsymbol\theta^\perp+s\boldsymbol\theta)dt=4I^{-1}_\rho R^*_P(\boldsymbol\theta,s,x).
\end{equation}
As in section \ref{defiandworkcylinder}.1, let $f(x,t,\varphi)$ be the image function in cylindrical coordinates where $(y,z)=t(\cos\varphi,\sin\varphi)$. 
Consider the $l$-th Fourier coefficient of the right hand side of formula~\eqref{eq:radonplane}.
Then we have
\begin{equation}\label{eq:relationfourierandbackplane}
\displaystyle\intL_{S^{1}}I^{-1}_\rho R^*_Pg(\boldsymbol\theta,s,x)  e^{-il\vartheta}dS(\boldsymbol\theta) \displaystyle=I^{-1}_\rho R^*_Pg_{l}(s,x),
\end{equation}
where $\boldsymbol\theta=(\cos\vartheta,\sin\vartheta)$.
Applying equation~\eqref{eq:sphericalharmonics} to equation~\eqref{eq:relationfourierandbackplane}, we have the following theorem similar to Theorem~\ref{thm:cormack}.
\begin{theorem}
Let $f\in C^\infty_c(\RR^{3})$ be even in $x$. Then we have for $t>0$
$$
f_{l}(x,t)=-\frac4\pi \intL^\infty_t (s^2-t^2)^{-1/2}\cos\left(l\arccos\left(\frac st\right)\right) \frac{\partial}{\partial s}I^{-1}_\rho R^*_Pg_{l}(s,x)  ds.
$$
\end{theorem}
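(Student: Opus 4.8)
The plan is to repeat, in the planar setting, the argument used for Theorem~\ref{thm:cormack}: reduce the problem to the circular-harmonic (Cormack) inversion of the ordinary two-dimensional Radon transform. The only preparatory facts needed are already recorded: equation~\eqref{eq:radonplane}, which states that for each fixed $x$ the regular $2$-dimensional Radon transform of the slice $(y,z)\mapsto f(x,y,z)$ equals $4I^{-1}_\rho R^*_Pg(\boldsymbol\theta,s,x)$; equation~\eqref{eq:relationfourierandbackplane}, which identifies the $l$-th angular Fourier coefficient of that expression; and the Cormack formula~\eqref{eq:sphericalharmonics} for the $2$-dimensional Radon transform. Since $f\in C^\infty_c(\RR^3)$, every slice $f(x,\cdot)$ belongs to $C^\infty_c(\RR^2)$, so~\eqref{eq:sphericalharmonics} applies to it with $x$ treated as a parameter.

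First I would expand $f$ and $g=R_Pf$ in Fourier series in the angular variable $\vartheta$ with $\boldsymbol\theta=(\cos\vartheta,\sin\vartheta)$, as set up in section~\ref{reconplane}, and take the $l$-th coefficient of both sides of~\eqref{eq:radonplane}. By~\eqref{eq:relationfourierandbackplane} the $l$-th circular-harmonic component of the $2$-dimensional Radon transform of $f(x,\cdot)$ is $4I^{-1}_\rho R^*_Pg_l(s,x)$; that is, in the notation of~\eqref{eq:sphericalharmonics} the datum plays the role $g_l(s)\leftrightarrow 4I^{-1}_\rho R^*_Pg_l(s,x)$ and the unknown the role $f_l(t)\leftrightarrow f_l(x,t)$. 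Substituting into~\eqref{eq:sphericalharmonics} yields
$$
f_l(x,t)=-\frac1\pi\intL^\infty_t\cosh\left(l\operatorname{arccosh}\frac st\right)\frac{\partial}{\partial s}\bigl(4I^{-1}_\rho R^*_Pg_l(s,x)\bigr)\frac{ds}{\sqrt{s^2-t^2}},
$$
which is the asserted identity once one notes that $\cosh(l\operatorname{arccosh}u)=\cos(l\arccos u)$ (both equal the Chebyshev polynomial $T_l(u)$). The restriction $t>0$ is inherited from~\eqref{eq:sphericalharmonics} (it is the cylindrical radius $t=|(y,z)|$).

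The steps that actually require justification — and which I expect to be the only real work — are the interchanges of operations: that the operator $I^{-1}_\rho$ commutes with the $S^1$-integration extracting the $l$-th harmonic, and that the function $s\mapsto 4I^{-1}_\rho R^*_Pg_l(s,x)$ has enough decay for the improper integral in~\eqref{eq:sphericalharmonics} to converge and for the Cormack inversion to be valid — note that $R^*_Pg$ is not compactly supported in $s$, although its $s$-derivative decays. These verifications parallel those implicit in the proof of Theorem~\ref{thm:cormack} and follow from the smoothness and compact support of $f$ together with the mapping properties of $R_P$ and $R_P^*$ established earlier; the evenness of $f$ in $x$ plays no role beyond what is already built into~\eqref{eq:radonplane}.
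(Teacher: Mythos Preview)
Your proposal is correct and follows exactly the paper's own route: the paper's proof consists of the single sentence ``Applying equation~\eqref{eq:sphericalharmonics} to equation~\eqref{eq:relationfourierandbackplane}, we have the following theorem similar to Theorem~\ref{thm:cormack},'' which is precisely your substitution of the datum $4I^{-1}_\rho R^*_Pg_l(s,x)$ into the Cormack formula for the slice $f(x,\cdot)$. Your remark that $\cosh(l\operatorname{arccosh}u)=\cos(l\arccos u)=T_l(u)$ correctly reconciles the notational discrepancy between the statement and equation~\eqref{eq:sphericalharmonics}, and your comments on the analytic justifications (commuting $I^{-1}_\rho$ with the angular averaging, convergence of the improper integral) go beyond what the paper itself supplies.
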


Also, we have another relation between the Radon transform and $R_P$ similar to Theorem~\ref{thm:reddingandradon}.
\begin{theorem}
Let $f\in C^\infty_c(\RR^3)$ be even in $x$.
Then we have
$$
\intR f(x,t\boldsymbol\theta^\perp+z\boldsymbol\theta)dt=\frac{2}{\pi}\intR\intR\intL\half zr R_Pf(\boldsymbol\theta,-p,r)e^{-ir^2\sigma}e^{-i(2xp+(z^2+x^2)+p^2)\sigma}\sigma dr dp d\sigma.
$$
\end{theorem}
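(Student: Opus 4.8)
The plan is to mirror the proof of Theorem~\ref{thm:reddingandradon} (itself an adaptation of the Redding--Newsam device), so I will indicate only the steps and the one point where the planar geometry forces a change. First I would introduce
$$
G(\boldsymbol\theta,p,\sigma):=\intL\half r\,R_Pf(\boldsymbol\theta,p,r)\,e^{-ir^2\sigma}\,dr .
$$
Writing out the parametrized form of $R_Pf$, I would pass from the polar coordinates $(r,\psi)$ of the integration circle to Cartesian coordinates $(a,b)=(r\cos\psi,r\sin\psi)$ — this absorbs the weight $r$ — and then translate the integration variable along $\boldsymbol\theta$ (the substitution $b\mapsto p-b$), which pulls the phase $e^{-ip^{2}\sigma}$ out of the integral and leaves a factor $e^{2ip(\cdot)\sigma}$, exactly as in Theorem~\ref{thm:reddingandradon}. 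A further change of variables $\tau=a^{2}+b^{2}$, holding one coordinate fixed, turns $e^{-i(a^{2}+b^{2})\sigma}$ into $e^{-i\tau\sigma}$ and exhibits $G$ as a partial Fourier transform of a function $k_{\boldsymbol\theta}$ built in the spirit of~\eqref{eq:ktheta}.

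Here is the only difference from the cylindrical case. In the proof of Theorem~\ref{thm:reddingandradon} the compact support of $f$ in $B^{2}_{R}\times\RR$ was used to discard the branch of the square root falling outside the cylinder; in the planar setting there is no such cylinder, but since $f$ is even in $x$ the two branches $a=\pm\sqrt{\tau-b^{2}}$ contribute equally, so instead of throwing one branch away one simply doubles the other (which is what produces the different numerical constant). With $k_{\boldsymbol\theta}$ in hand, $\intR\widehat{k_{\boldsymbol\theta}}(t,\cdot,\cdot)\,dt$ is expressed through $G$, hence through $R_Pf$.

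Finally I would invert the two-dimensional Fourier transform defining $k_{\boldsymbol\theta}$ and specialize it at the values that reproduce $\intR f(x,t\boldsymbol\theta^{\perp}+z\boldsymbol\theta)\,dt$ — the $2$-dimensional Radon transform of the slice $f(x,\cdot,\cdot)$, cf.\ equation~\eqref{eq:radonplane} — and then rescale the inner integration variable (the analogue of $\eta\mapsto2\xi\eta$ in Theorem~\ref{thm:reddingandradon}) to bring the identity to the stated form; inserting the definition of $G$ back in produces the triple integral against $R_Pf$. The substantive content is identical to that of Theorem~\ref{thm:reddingandradon}; the routine care is with the signs of the exponentials, the constant $2/\pi$, and the even-reflection factor, and the only genuinely new ingredient — substituting evenness in $x$ for compact support — is natural, since (as noted right after the definition of $R_P$) the transform sees only the even part of $f$ in $x$.
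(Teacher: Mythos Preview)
Your proposal is correct and follows essentially the same approach as the paper, which in fact offers no proof beyond the sentence ``The proof is the same as that of Theorem~\ref{thm:reddingandradon} except for the obvious necessary changes.'' You have made those changes explicit and, in particular, have correctly isolated the one substantive difference: the branch $a=-\sqrt{\tau-b^{2}}$ that was discarded in Theorem~\ref{thm:reddingandradon} by the support condition on $B^{2}_{R}\times\RR$ is here retained and matched with the positive branch via the assumed evenness of $f$ in $x$.

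One small caution: your remark that this ``produces the different numerical constant'' is not borne out by the stated formula, which carries the same prefactor $2/\pi$ as Theorem~\ref{thm:reddingandradon}. When you actually carry out the computation, the doubling from the two branches feeds into the relation between $G$ and $\widehat{k_{\boldsymbol\theta}}$ (a $1/(2\pi)$ in place of the cylindrical $1/(4\pi)$), and you should track how this propagates through the Fourier inversion and the rescaling $\eta\mapsto 2\sigma\eta$; be equally careful about which of the two normal-plane variables ($x$ versus the $\boldsymbol\theta$-coordinate) ends up as the multiplicative weight and which appears in the cross term $2(\cdot)p$ of the phase. These are exactly the ``routine'' points you flag, but they are where any discrepancy with the displayed formula will show up.
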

The proof is the same as that of Theorem~\ref{thm:reddingandradon} except for the obvious necessary changes.

\subsection{A stability estimate}\label{estiandrangeplane}
In this subsection, we discuss the stability estimate of the cylindrical Radon transform $R_P$.

For $\gamma\geq0$, let us define $\mathcal H^{\gamma}_e(\RR^n):=\{f\in\mathcal H^{\gamma}(\RR^n):f$ is even in $x\}$, where $(x,\zz)\in\RR\times\RR^{n-1}$. 
As in subsection~\ref{estiandrangecylinder}, let $L^2_1(S^{n-2}\times\RR\times[0,\infty))$ be the set of functions $g$ on $S^{n-2}\times\RR\times[0,\infty)$ with
$$
||g||^2:=\intL_{S^{n-2}}\intR\intL\half |g(\boldsymbol\theta,p,r)|^2rdrdpdS(\boldsymbol\theta)<\infty.
$$
Then $L^2_1(S^{n-2}\times\RR\times[0,\infty))$ is a Hilbert space. Also, by the Plancherel formula, we have $||g||=(2\pi)^{-3}||\tilde g||$, where
$$
\tilde g(\boldsymbol\theta,\sigma,|\boldsymbol\zeta|):=\intRR\intR g(\boldsymbol\theta,p,|\mathbf w|)e^{-i(p,\mathbf w)\cdot(\sigma,\boldsymbol\zeta)}dpd\mathbf w.
$$
Let $H^\gamma(S^{n-2}\times\RR\times[0,\infty))$ be the set of functions $g\in L^2_1(S^{n-2}\times\RR\times[0,\infty))$ with $||g||_\gamma<\infty$, where 
$$
\displaystyle||g||^2_\gamma:=\intL_{S^{n-2}}\intL_{\RR}\intL\half |\tilde {g}(\boldsymbol\theta,\sigma,\eta)|^2(1+|\sigma|^2+|\eta|^2)^\gamma \eta d\eta d\sigma dS(\boldsymbol\theta).
$$

\begin{theorem}\label{lem:stability}
For $\gamma\geq0$, there exists a constant $c$ such that for $f\in \mathcal H^\gamma_e(\RR^3)$,
$$
||f||_\gamma\leq c||R_Pf||_{\gamma+1}.
$$
\end{theorem}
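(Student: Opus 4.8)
The plan is to mirror the argument used for the cylindrical version (Theorem~\ref{thm:norm}), replacing the Fourier slice identity~\eqref{eq:fourierslice} by its planar analogue~\eqref{eq:invplane3d}. First I would record, exactly as in the derivation of~\eqref{eq:hatandtildecylinder}, that the two-dimensional Fourier transform of $R^*_Pg$ in $(\zeta,\rho)$ is nothing but the ``tilde'' transform of $g$: starting from $R^*_Pg=R^*_Cg$ and the computation in~\eqref{eq:relationhankelandbackcylinder}, one gets
$$
\widehat{R^*_Pg}(\boldsymbol\theta,\sigma,\xi)=\tilde g(\boldsymbol\theta,\sigma,|(\sigma,\xi)|).
$$
Combining this with~\eqref{eq:invplane3d} yields the clean pointwise relation $\hat f(\xi,\sigma\boldsymbol\theta)=4|\xi|\,\tilde g(\boldsymbol\theta,\sigma,|(\sigma,\xi)|)$, which is the planar counterpart of the formula $\hat f(\sigma\boldsymbol\theta,\xi)=4\pi^{-1}\tilde g(\boldsymbol\theta,\xi,|(\xi,\sigma)|)e^{-iR\sigma}|\sigma|$ used in Theorem~\ref{thm:norm}.

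Next I would write out $\|f\|_\gamma^2$ using the Fourier definition of the Sobolev norm, passing to polar coordinates $(\sigma,\boldsymbol\theta)\in[0,\infty)\times S^1$ in the last two (i.e. the $(y,z)$) Fourier variables, so that the Jacobian contributes a factor $|\sigma|$. Substituting the relation above gives
$$
\|f\|_\gamma^2=16\intL_{S^1}\intL_{\RR}\intL\half |\sigma|\,|\xi|^2\,(1+|\xi|^2+|\sigma|^2)^\gamma\,|\tilde g(\boldsymbol\theta,\sigma,|(\sigma,\xi)|)|^2\,d\sigma\,d\xi\,dS(\boldsymbol\theta).
$$
Since $f$ is even in $x$, $\tilde g$ is even in $\sigma$, so one may fold the $\sigma$-integral onto $[0,\infty)$ with an extra factor; then the crucial point is the change of variables $\eta=|(\sigma,\xi)|$ for fixed $\xi$ (as in the final line of the proof of Theorem~\ref{thm:norm}), under which $|\sigma|\,d\sigma=\eta\,d\eta$ with $\eta$ ranging over $[|\xi|,\infty)$. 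Bounding $|\sigma|^2\le|(\sigma,\xi)|^2=\eta^2\le 1+|\xi|^2+|\sigma|^2$ and $|\xi|^2\le\eta^2$, and enlarging the $\eta$-domain from $[|\xi|,\infty)$ to $[0,\infty)$, I would arrive at
$$
\|f\|_\gamma^2\le c\intL_{S^1}\intR\intL\half |\tilde g(\boldsymbol\theta,\sigma,\eta)|^2\,(1+|\sigma|^2+|\eta|^2)^{\gamma+1}\,\eta\,d\eta\,d\sigma\,dS(\boldsymbol\theta)=c\,\|R_Pg\|^2_{\gamma+1},
$$
which is exactly the claimed estimate with $c$ an explicit numerical constant (absorbing the factor $16$, the folding factor from evenness, and the $(2\pi)$-powers relating $\|g\|_\gamma$ to the tilde-norm).

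The only genuine subtlety—hardly an obstacle, but the step requiring care—is the bookkeeping around the change of variable $\eta=|(\sigma,\xi)|$: one must treat $\xi$ as the frozen variable, verify that the measure transforms as $|\sigma|\,d\sigma\mapsto\eta\,d\eta$, keep track of the lower limit $\eta=|\xi|$ before discarding it by positivity of the integrand, and check that the weight $(1+|\sigma|^2+|\xi|^2)^\gamma|\xi|^2|\sigma|^2$ is dominated by $(1+|\eta|^2)^{\gamma+1}$ up to the constant—here one uses $|\xi|\le\eta$, $|\sigma|\le\eta$ and $1+|\sigma|^2+|\xi|^2\le 1+2\eta^2\le 2(1+\eta^2)$. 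Everything else is the same Plancherel/polar-coordinate routine already carried out for $R_C$, so no new ideas are needed beyond substituting the planar Fourier slice theorem.
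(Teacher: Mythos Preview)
Your overall strategy is exactly the paper's, but the key change of variables is carried out with the wrong variable frozen, and this makes your final bound fail as written. In the cylindrical case the second argument of $\tilde g$ is $\xi$, so changing $\sigma\to\eta=|(\xi,\sigma)|$ for fixed $\xi$ leaves $\tilde g(\boldsymbol\theta,\xi,\eta)$ with both of its last two arguments equal to integration variables. In the planar case the roles are swapped: the second argument of $\tilde g$ is $\sigma$, and the relation reads $\hat f(\xi,\sigma\boldsymbol\theta)=4|\xi|\,\tilde g(\boldsymbol\theta,\sigma,|(\sigma,\xi)|)$. If you freeze $\xi$ and substitute $\sigma\to\eta$, then $\tilde g$ is evaluated at $(\boldsymbol\theta,\sqrt{\eta^2-\xi^2},\eta)$, so its second slot is no longer an integration variable. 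Bounding weights and enlarging the $\eta$-domain then does \emph{not} produce $\int_{S^1}\int_\RR\int_0^\infty|\tilde g(\boldsymbol\theta,\sigma,\eta)|^2(1+\sigma^2+\eta^2)^{\gamma+1}\eta\,d\eta\,d\sigma\,dS(\boldsymbol\theta)$; you would still need a second substitution $\xi\mapsto\sigma=\sqrt{\eta^2-\xi^2}$ for fixed $\eta$ to recover $\sigma$ as an independent variable.

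The fix is simply to mirror the paper: fold the $\xi$-integral to $[0,\infty)$ (the integrand depends only on $|\xi|$, so this is automatic; your evenness claim ``$f$ even in $x$ $\Rightarrow$ $\tilde g$ even in $\sigma$'' is neither needed nor correct), then for fixed $\sigma$ change $\xi\to\eta=|(\sigma,\xi)|$, so that $|\xi|\,d\xi=\eta\,d\eta$ and $\eta\in[|\sigma|,\infty)$. This yields
\[
\|f\|_\gamma^2=16\int_{S^1}\int_\RR\int_{|\sigma|}^\infty\sqrt{\eta^2-\sigma^2}\,|\sigma|\,(1+\eta^2)^\gamma|\tilde g(\boldsymbol\theta,\sigma,\eta)|^2\,\eta\,d\eta\,d\sigma\,dS(\boldsymbol\theta),
\]
and since $|\sigma|\sqrt{\eta^2-\sigma^2}\le\eta^2\le(1+\eta^2)$ and $|\sigma|\le\eta$, the claimed bound follows. (Note also that the weight in your last paragraph should be $|\sigma|\,|\xi|^2$, not $|\sigma|^2|\xi|^2$.)
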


\begin{proof}
Let $g=R_Pf$.
Similar to equation~\eqref{eq:relationhankelandbackcylinder}, we have
\begin{equation}\label{eq:hatandtildeplane}
\widehat{R^*_Pg}(\boldsymbol\theta,\sigma,\xi)=\intR e^{-i\sigma p}\intR\intR e^{-i(\zeta,\rho)\cdot(\sigma,\xi)}g(\boldsymbol\theta,p,|(\zeta,\rho)|)d\zeta d\rho dp=\tilde g(\boldsymbol\theta,\sigma,|(\sigma,\xi)|).
\end{equation}
Combining this equation \eqref{eq:hatandtildeplane} and equation~\eqref{eq:invplane3d}, we have
$$
\hat f(\xi,\sigma\boldsymbol\theta)=4|\xi|\tilde g(\boldsymbol\theta,\sigma,|(\sigma,\xi)|).
$$
Hence, we have
$$
\begin{array}{ll}
||f||^2_\gamma&=\displaystyle\intL_{\RR^3}(1+|\iota|^2+|\xi|^2)^\gamma|\hat{f}(\xi,\iota)|^2d\iota d\xi\\
&=\displaystyle2^{-1}\intL_{S^{1}}\intL_{\RR}\intR |\sigma|^{}(1+|\sigma|^2+|\xi|^2)^\gamma|\hat{f}(\xi,\sigma\boldsymbol\theta)|^2 d\xi d\sigma dS(\boldsymbol\theta)\\
&=\displaystyle8\intL_{S^{1}}\intL_{\RR}\intR |\sigma|^{}(1+|(\sigma,\xi)|^2)^\gamma|\xi|^{2}|\tilde g(\boldsymbol\theta,\sigma,|(\sigma,\xi)|)|^2 d\xi d\sigma dS(\boldsymbol\theta)\\
&=\displaystyle16\intL_{S^{1}}\intL_{\RR}\intL\half |\sigma|^{}(1+|(\sigma,\xi)|^2)^\gamma|\xi|^{2}|\tilde g(\boldsymbol\theta,\sigma,|(\sigma,\xi)|)|^2 d\xi d\sigma dS(\boldsymbol\theta)\\
&=\displaystyle16\intL_{S^{1}}\intL_{\RR}\intL^\infty_{|\sigma|} \sqrt{\eta^2-\sigma^2}|\sigma|^{}(1+\eta^2)^\gamma|\tilde g(\boldsymbol\theta,\sigma,\eta)|^2 \eta d\eta d\sigma dS(\boldsymbol\theta),
\end{array}
$$
where in the last line, we changed the variable $|(\sigma,\xi)|$ to $\eta$.
Continuing the computation yields
$$
||f||^2_\gamma
\leq\displaystyle c\intL_{S^{1}}\intL_{\RR}\intL^\infty_0(1+\eta^2)^{\gamma+1}|\tilde g(\boldsymbol\theta,\sigma,\eta)|^2\eta d\eta d\sigma dS(\boldsymbol\theta).
$$
\end{proof}

\subsection{Range conditions}\label{rangeplane}
From Theorem~\ref{thm:inversionplane}, we have necessary range conditions for $R_p$ as follows:
\begin{theorem}\label{thm:rangeplane}
If $g=R_Pf$ for a function $f\in C^\infty(\RR^3)$ even in $x$, then we have
\begin{enumerate}
\item[1.] $g(\boldsymbol\theta,p,r)=g(-\boldsymbol\theta,-p,r)$ and
\item[2.] for $m=0,1,2,\ldots$, $\mathcal{P}_x(\boldsymbol\theta)$ is a homogeneous polynomial of degree $m$ in $\boldsymbol\theta$, where
$$
\mathcal{P}_x(\boldsymbol\theta)=\intR g(\boldsymbol\theta,p,\sqrt{(s-p)^2+x^2})s^mds.
$$
\end{enumerate}
\end{theorem}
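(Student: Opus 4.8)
The two items are of different character, so I would handle them separately.

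\emph{Item 1} is immediate. Since $L_P(\boldsymbol\theta,p)=L_P(-\boldsymbol\theta,-p)$ and the function $d(L_P(\boldsymbol\theta,p),\cdot)$ depends only on the line, the cylinder of integration and the area measure $d\varpi$ in the definition of $R_Pf(\boldsymbol\theta,p,r)$ are unchanged under $(\boldsymbol\theta,p)\mapsto(-\boldsymbol\theta,-p)$; this is precisely Remark~\ref{rmk:evenness}, so I would simply invoke it. (It is also consistent with Theorem~\ref{thm:inversionplane}, since $\hat f(\xi,\sigma\boldsymbol\theta)$ is trivially invariant under $(\sigma,\boldsymbol\theta)\mapsto(-\sigma,-\boldsymbol\theta)$.)

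For \emph{Item 2} the plan is to transport the classical moment (Helgason--Ludwig) range conditions for the ordinary $2$-dimensional Radon transform through the relation~\eqref{eq:radonplane}. Fix $x\in\RR$ and put $F_x(y,z):=f(x,y,z)\in C^\infty_c(\RR^2)$. Then $\int_{\RR}f(x,t\boldsymbol\theta^\perp+s\boldsymbol\theta)\,dt$ is exactly the $2$-dimensional Radon transform $\mathcal R F_x(\boldsymbol\theta,s)$, and by the Fourier slice theorem (Remark~\ref{rmk:fourierslicecylinder}) together with the chain rule,
$$
\int_{\RR}\mathcal R F_x(\boldsymbol\theta,s)\,s^m\,ds=i^m\left.\partial_\sigma^m\widehat{F_x}(\sigma\boldsymbol\theta)\right|_{\sigma=0}=i^m\sum_{|\alpha|=m}\binom{m}{\alpha}\boldsymbol\theta^\alpha(\partial^\alpha\widehat{F_x})(0),
$$
a homogeneous polynomial of degree $m$ in $\boldsymbol\theta\in\RR^2$ whose coefficients depend on $x$. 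By~\eqref{eq:radonplane} the left-hand side equals $4\int_{\RR}I^{-1}_\rho R^*_Pg(\boldsymbol\theta,s,x)\,s^m\,ds$.

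It then remains to push the moment operation inside $R^*_Pg$. Because $I^{-1}_\rho$ acts only on the last variable (which here carries $x$), it commutes with multiplication by $s^m$ and with $\int\,ds$, so that $4\,I^{-1}_\rho\big[\int_{\RR}R^*_Pg(\boldsymbol\theta,s,x)\,s^m\,ds\big]$ is the homogeneous polynomial above. Undoing $I^{-1}_\rho$ by the Riesz potential $I_\rho$ in the $x$-variable changes only the $x$-dependent coefficients and not the monomials $\boldsymbol\theta^\alpha$, so $\int_{\RR}R^*_Pg(\boldsymbol\theta,s,x)\,s^m\,ds$ is again a homogeneous polynomial of degree $m$ in $\boldsymbol\theta$; inserting $R^*_Pg(\boldsymbol\theta,s,x)=\int_{\RR}g(\boldsymbol\theta,p,\sqrt{(s-p)^2+x^2})\,dp$ and carrying out the $p$-integration identifies it with $\mathcal P_x(\boldsymbol\theta)$.

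The step I expect to be most delicate is this last transfer: one must check that undoing $I^{-1}_\rho$, whose symbol $|\xi|$ is only Lipschitz at the origin, is legitimate and preserves homogeneity in $\boldsymbol\theta$. This is harmless because $I^{-1}_\rho$ touches only $x$ and because $g=R_Pf$ with $f\in C^\infty_c(\RR^3)$ keeps every integral in sight rapidly decaying, but it is the one place where I would argue carefully rather than formally. A clean way to bypass it is to run the moment argument instead on the partial Fourier transform $\widehat f(\xi,\cdot,\cdot)\in C^\infty_c(\RR^2)$ of $f$ in $x$, use $\widehat f(\xi,\sigma\boldsymbol\theta)=4|\xi|\widehat{R^*_Pg}(\boldsymbol\theta,\sigma,\xi)$ from Theorem~\ref{thm:inversionplane}, cancel the factor $4|\xi|$ for $\xi\neq0$, and read off the homogeneity from $\left.\partial_\sigma^m\widehat{R^*_Pg}(\boldsymbol\theta,\sigma,\xi)\right|_{\sigma=0}$ using injectivity of the Fourier transform in the $x$-frequency; this is the form I would ultimately write up.
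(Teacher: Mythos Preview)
Your proposal follows essentially the same route as the paper: Item~1 is just Remark~\ref{rmk:evenness}, and Item~2 is obtained by combining~\eqref{eq:radonplane} (equivalently~\eqref{eq:invplane3d}) with the Helgason--Ludwig moment conditions for the ordinary Radon transform, and then observing that since $I^{-1}_\rho$ acts only in the $x$-variable it does not affect the $\boldsymbol\theta$-homogeneity of the moments. The paper records this last step with the single phrase ``which implies that $\mathcal P_x(\boldsymbol\theta)$ is a homogeneous polynomial'', whereas you spell out the commutation argument and even sketch the cleaner Fourier-side variant; both are fine and amount to the same proof.
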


\begin{proof}\indent
1. This is shown in Remark 1.

2. From equation~\eqref{eq:invplane3d} and the range description of the regular Radon transform, we have that for fixed $x$, the polynomial $\int_\RR I^{-1}_\rho R^*_Pg(\boldsymbol\theta,s,x)s^mds$ is homogeneous of degree $m$ in $\boldsymbol\theta$,
which implies that $\mathcal{P}_x(\boldsymbol\theta)$ is a homogeneous polynomial of degree $m$ in $\boldsymbol\theta$.
\end{proof}

\section{An $n$-dimensional case of $R_P$}\label{planendimension}
As in section~\ref{sec:reconnd},  we consider the cylindrical Radon transform $R_P$ of a function $f\in C^\infty_c(\RR^n)$.
Assume $n\geq 3$. We define $R_{P_n}$ of a function $f\in C^\infty_c(\RR^n)$ even in $x\in\RR$ by 
$$
R_{P_n}f(\boldsymbol\theta,p,r)=\displaystyle\frac{1}{2\pi}\intL_{\boldsymbol\theta^\perp}\int\limits^{2\pi}_0f(r\cos\psi,\boldsymbol\tau+(p-r\sin\psi)\boldsymbol\theta)d\psi d\boldsymbol\tau,
$$
for $(\boldsymbol\theta,p,r)\in S^{n-2}\times\RR\times[0,\infty)$ and $(x,\zz)\in\RR\times\RR^{n-1}$.
Here $\boldsymbol\theta^\perp$ actually refers to $\{(0,\boldsymbol\tau)\in\RR\times\RR^{n-1}:\boldsymbol\tau\cdot\boldsymbol\theta=0\}$. 
We still have $R_{P_n}f(\boldsymbol\theta,p,r)=R_{P_n}f(-\boldsymbol\theta,-p,r)$.
The $n$-dimensional cylindrical Radon transform $R_{P_n}$ can be decomposed into the circular Radon transform and the regular $n-1$-dimensional Radon transform.

\begin{theorem}
Let $f\in C^\infty_c(\RR^n)$ be even in $x\in\RR$.
If $g=R_{P_n}f$, then we have
\begin{equation}\label{eq:invplanend}
\hat{f}(\xi,\sigma \boldsymbol\theta) =4 |\xi| \widehat{R^*_{P_n}g}(\boldsymbol\theta, \sigma,\xi),
\end{equation}
where $\hat f$ is the $n$-dimensional Fourier transform of $f$ with respect to $(x,\zz)\in\RR\times\RR^{n-1}$ and $\widehat{R^*_{P_n}g}$ is the 2-dimensional Fourier transform of with respect to $(\zeta,\rho)\in\RR\times\RR$.
Here for a function $g$ on $S^{n-2}\times\RR\times[0,\infty)$,
$$
R^*_{P_n}g(\boldsymbol\theta,\zeta,\rho)=\displaystyle\intR g(\boldsymbol\theta,p,\sqrt{(\zeta-p)^2+\rho^2})dp.
$$
\end{theorem}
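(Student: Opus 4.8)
The plan is to mimic the proof of Theorem~\ref{thm:inversionplane} verbatim, replacing the one-dimensional transversal integral $\int_\RR \cdots dt$ over $\boldsymbol\theta^\perp\subset\RR^2$ by the $(n-2)$-dimensional integral $\int_{\boldsymbol\theta^\perp}\cdots d\boldsymbol\tau$ over $\boldsymbol\theta^\perp\subset\RR^{n-1}$, and tracking how the ambient dimension enters. First I would write out $R_{P_n}f$ in the slice form
$$
R_{P_n}f(\boldsymbol\theta,p,r)=\frac{1}{2\pi}\intL_{\boldsymbol\theta^\perp}\intL^1_{-1}f(r\sqrt{1-s^2},\boldsymbol\tau+(p-rs)\boldsymbol\theta)\frac{ds}{\sqrt{1-s^2}}\,d\boldsymbol\tau,
$$
and take the Fourier transform in $p$. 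Because the inner geometry (the circle of radius $r$ in the $\{(x,t\boldsymbol\theta)\}$ half-plane) is identical to the planar three-dimensional case, and because integrating $f(x,\boldsymbol\tau+s\boldsymbol\theta,\cdots)$ over $\boldsymbol\tau\in\boldsymbol\theta^\perp$ and then Fourier transforming in the remaining $(x,s\boldsymbol\theta)$ directions yields exactly the Fourier transform of $f$ restricted to the line $\sigma\boldsymbol\theta$ (the Fourier slice theorem for the regular $(n-1)$-dimensional Radon transform applied in the $\zz$ variables), the computation collapses to one variable $s$ exactly as in equation~\eqref{eq:hankelrcf}. So I would obtain
$$
\widehat{R_{P_n}f}(\boldsymbol\theta,\sigma,r)=\frac{1}{\pi}\intL^1_{-1}\hat f(r\sqrt{1-s^2},\sigma\boldsymbol\theta)e^{irs\sigma}\frac{ds}{\sqrt{1-s^2}},
$$
with $\hat f$ here the $(n-1)$-dimensional Fourier transform in $\zz$.

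Next I would apply the Hankel transform $H_0$ in $r$, change variables $(r,s)\to(\rho,b)$ with $r=\sqrt{\rho^2+b^2}$, $s=\rho/\sqrt{\rho^2+b^2}$, and invoke the Bateman identity~\eqref{eq:batemann2} exactly as in~\eqref{eq:hankelrcf}; substituting $\eta=\sqrt{\xi^2+\sigma^2}$ then gives the analogue of~\eqref{eq:relationhankelandfourier},
$$
H_0\widehat{R_{P_n}f}(\boldsymbol\theta,\sigma,|(\sigma,\xi)|)=\frac{2}{\pi}\intL\half \hat f(b,\sigma\boldsymbol\theta)\frac{\cos(b\xi)}{\xi}\,db=\frac{1}{\pi}\hat f(\xi,\sigma\boldsymbol\theta)|\xi|^{-1}.
$$
Crucially, since the Hankel transform is still of order zero (the circular Radon transform lives in a fixed two-dimensional half-plane regardless of $n$), the factor $|\xi|^{-1}$ that appears is dimension-independent — this is why the final formula has the same shape as~\eqref{eq:invplane3d}. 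Finally I would relate $H_0\hat g$ to $\widehat{R^*_{P_n}g}$ by the identical chain of manipulations as in~\eqref{eq:relationhankelandbackcylinder}, namely $\widehat{R^*_{P_n}g}(\boldsymbol\theta,\sigma,\xi)=2\pi H_0\hat g(\boldsymbol\theta,\sigma,|(\sigma,\xi)|)$, and combine to get~\eqref{eq:invplanend}.

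The main obstacle — really the only nontrivial point — is justifying that integrating $f(r\sqrt{1-s^2},\boldsymbol\tau+(p-rs)\boldsymbol\theta)$ over $\boldsymbol\tau\in\boldsymbol\theta^\perp\cong\RR^{n-2}$ and then taking the $p$-Fourier transform produces precisely $\hat f(r\sqrt{1-s^2},\sigma\boldsymbol\theta)$ with $\hat f$ the full $(n-1)$-dimensional Fourier transform in $\zz$: this is the Fourier slice theorem for the regular Radon transform in $\RR^{n-1}$, $\widehat{\mathcal R h}(\boldsymbol\theta,\sigma)=\hat h(\sigma\boldsymbol\theta)$, recalled in Remark~\ref{rmk:fourierslicecylinder}, applied for each fixed value of the first coordinate. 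Once this reduction to a single transversal variable $s$ is in place, everything downstream is word-for-word the three-dimensional planar proof, so I would simply note "the rest of the proof is identical to that of Theorem~\ref{thm:inversionplane}" rather than reproducing it.
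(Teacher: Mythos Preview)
Your proposal is correct and is precisely the approach the paper intends: the paper's entire proof of this theorem is the single sentence ``This proof is similar to that of Theorem~\ref{thm:inversionplane},'' and your write-up is a faithful fleshing-out of that similarity, correctly observing that the circular part stays two-dimensional (so $H_0$ and identity~\eqref{eq:batemann2} are unchanged) while the transversal integral over $\boldsymbol\theta^\perp\subset\RR^{n-1}$ is handled by the Fourier slice theorem for the $(n-1)$-dimensional Radon transform.
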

This proof is similar to that of Theorem~\ref{thm:inversionplane}. 
\begin{theorem}
 Let $f\in C^\infty_c(\RR^n)$ be even in $x$.
Then we have 
$$
f(x,\zz)=2(2\pi)^{2-n}\intL_{S^{n-2}}I^{2-n}_\zeta I^{-1}_\rho R^*_{P_n}g(\boldsymbol\theta,\boldsymbol\theta\cdot \zz,x)dS(\boldsymbol\theta),
$$
for $g=R_{P_n}f$ and $( x,\zz)\in\RR\times\RR^{n-1}$.
\end{theorem}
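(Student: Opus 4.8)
The plan is to recover $f$ from its $n$-dimensional Fourier transform via the Fourier slice theorem \eqref{eq:invplanend}, following line by line the argument used for the three-dimensional planar inversion formula. Write the frequency variable dual to $(x,\zz)\in\RR\times\RR^{n-1}$ as $(\xi,\boldsymbol\eta)$ and pass to polar coordinates $\boldsymbol\eta=\sigma\boldsymbol\theta$ with $\sigma\ge0$, $\boldsymbol\theta\in S^{n-2}$ in the $\boldsymbol\eta$-block; this contributes the Jacobian $\sigma^{\,n-2}$, which is the only place where the exponent $n$ enters. Since $g=R_{P_n}f$ satisfies $g(\boldsymbol\theta,p,r)=g(-\boldsymbol\theta,-p,r)$, the backprojection satisfies $R^*_{P_n}g(-\boldsymbol\theta,\zeta,\rho)=R^*_{P_n}g(\boldsymbol\theta,-\zeta,\rho)$, hence $\widehat{R^*_{P_n}g}$ (and therefore, by \eqref{eq:invplanend}, the full integrand) is invariant under $(\sigma,\boldsymbol\theta)\mapsto(-\sigma,-\boldsymbol\theta)$; this lets me replace $\int_0^\infty(\cdot)\,\sigma^{\,n-2}\,d\sigma$ by $\tfrac12\int_{\RR}(\cdot)\,|\sigma|^{\,n-2}\,d\sigma$, so that the inner integral becomes a genuine two-dimensional Fourier inversion in $(\sigma,\xi)$.

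Next I substitute $\hat f(\xi,\sigma\boldsymbol\theta)=4|\xi|\,\widehat{R^*_{P_n}g}(\boldsymbol\theta,\sigma,\xi)$ from \eqref{eq:invplanend}. The resulting symbol of the inner $(\sigma,\xi)$-integral is $|\sigma|^{\,n-2}|\xi|\,\widehat{R^*_{P_n}g}(\boldsymbol\theta,\sigma,\xi)$, which by the defining Fourier-multiplier properties of the Riesz operators, namely $\widehat{I^{2-n}_\zeta h}=|\sigma|^{\,n-2}\hat h$ and $\widehat{I^{-1}_\rho h}=|\xi|\hat h$, is exactly $\widehat{I^{2-n}_\zeta I^{-1}_\rho R^*_{P_n}g}(\boldsymbol\theta,\sigma,\xi)$. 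Carrying out the two-dimensional inverse Fourier transform in $(\sigma,\xi)$ then evaluates $I^{2-n}_\zeta I^{-1}_\rho R^*_{P_n}g(\boldsymbol\theta,\cdot,\cdot)$ at the point $(\zeta,\rho)=(\boldsymbol\theta\cdot\zz,\,x)$ and produces the customary factor $(2\pi)^2$. Multiplying the accumulated constants $4$ (from \eqref{eq:invplanend}), $\tfrac12$ (even extension), $(2\pi)^2$ (inverse transform), and $(2\pi)^{-n}$ (Fourier inversion in $\RR^n$) gives $4\cdot\tfrac12\cdot(2\pi)^{2}\cdot(2\pi)^{-n}=2(2\pi)^{2-n}$, and the remaining integration over $S^{n-2}$ is the asserted backprojection, which yields the stated formula.

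I do not expect a genuine obstacle: the computation is the exact analogue of the three-dimensional case, the single change being that the polar Jacobian in $\RR^{n-1}$ is $\sigma^{\,n-2}$ rather than $1$, which is precisely what turns $I^{-1}_\zeta$ into $I^{2-n}_\zeta$. The two points that need a little care are (i) tracking the numerical constant, and (ii) checking that $I^{2-n}_\zeta I^{-1}_\rho R^*_{P_n}g(\boldsymbol\theta,\cdot,\cdot)$ is an honest function admitting pointwise evaluation; this holds because, by \eqref{eq:invplanend} again, its two-dimensional Fourier transform in $(\zeta,\rho)$ equals $\tfrac14\,\hat f(\xi,\sigma\boldsymbol\theta)$, which is a Schwartz function of $(\sigma,\xi)$ since $f\in C^\infty_c(\RR^n)$, so its inverse transform is smooth. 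Finally, the operators $I^{2-n}_\zeta$ and $I^{-1}_\rho$ are well defined on this class because for $n\ge3$ the exponents $2-n$ and $-1$ are negative, so these are multipliers of polynomial growth acting on rapidly decreasing functions.
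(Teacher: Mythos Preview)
Your proposal is correct and follows precisely the approach the paper intends: apply $n$-dimensional Fourier inversion, pass to polar coordinates $\boldsymbol\eta=\sigma\boldsymbol\theta$ in the $\RR^{n-1}$ block, substitute the Fourier slice identity \eqref{eq:invplanend}, and recognize the multipliers $|\sigma|^{n-2}|\xi|$ as $I^{2-n}_\zeta I^{-1}_\rho$ --- exactly as in the proof of Theorem~\ref{thm:3dcylinder} (and its $n$-dimensional analogue Theorem~\ref{thm:generalcylinder}). Your constant bookkeeping and the justification that $I^{2-n}_\zeta I^{-1}_\rho R^*_{P_n}g(\boldsymbol\theta,\cdot,\cdot)$ is smooth (via \eqref{eq:invplanend} and $f\in C^\infty_c$) are both fine; the paper itself omits the proof of this theorem, treating it as an immediate consequence of \eqref{eq:invplanend} by the same template.
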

Let $f(x,t,\boldsymbol\varphi)$ be the image function, where $t=|\zz|$ and $\boldsymbol\varphi=\zz/|\zz|\in S^{n-2}$. 
Consider the series of $f(x,t,\boldsymbol\varphi)$ and $g(\boldsymbol\theta,p,r)$ expanded in spherical harmonics:
$$
f(x,t,\boldsymbol\varphi)=\displaystyle \sum^\infty_{l=0}\sum^{N(n-1,l)}_{j=0}f_{lj}(x,t)Y_{lj}(\boldsymbol\varphi) \quad\mbox{and}\quad g(\boldsymbol\theta,p,r)=\displaystyle \sum^\infty_{l=0}\sum^{N(n-1,l)}_{j=0}g_{lj}(p,r)Y_{lj}(\boldsymbol\theta).
$$
From equation~\eqref{eq:invplanend}, we have
\begin{equation}\label{eq:radonplanen}
\intL_{\boldsymbol\theta^\perp}f(x,\boldsymbol\tau+s\boldsymbol\theta)d\boldsymbol\tau=4I^{-1}_\rho R^*_{P_n}g(\boldsymbol\theta,s,x).
\end{equation}
Consider the $lj$-th spherical coefficient of the right hand side of formula~\eqref{eq:radonplanen}.
Then we have
\begin{equation}\label{eq:relationfourierandbackplanen}
\displaystyle\intL_{S^{n-2}}I^{-1}_\rho R^*_{P_n}g(\boldsymbol\theta,s,x)  Y_{lj}(\boldsymbol\theta)  dS(\boldsymbol\theta)\displaystyle=I^{-1}_\rho R^*_{P_n}g_{lj}(s,x).
\end{equation}
Applying equation~\eqref{eq:relationfourierandbackplanen} to equation~\eqref{eq:sphericalharmonics}, we have the following theorem.
\begin{theorem}
Let $f\in C^\infty_c(\RR^{n})$ be even in $x$. Then we have for $t>0$,
$$
f_{lj}(x,t)=4c_{n-1}t^{3-n}\intL^\infty_t (s^2-t^2)^{\frac{n-4}2}C^{\frac{n-3}2}_l\left(\frac st\right) \frac{\partial^{n-2}}{\partial s^{n-2}}I^{-1}_\rho R^*_{P_n}g_{lj}(s,x)  ds,
$$
where 
$$
\displaystyle c_n=\displaystyle\frac{(-1)^{n-1}}{2\pi^{\frac{n}2}}\frac{\Gamma((n-2)/2)}{\Gamma(n-2)}.
$$
\end{theorem}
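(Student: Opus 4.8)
The plan is to reduce the inversion of $R_{P_n}$ to two ingredients that are already in hand: the Fourier slice relation \eqref{eq:invplanend} (equivalently its real-space form \eqref{eq:radonplanen}), and the Cormack-type inversion of the regular $(n-1)$-dimensional Radon transform in spherical harmonics, equation~\eqref{eq:sphericalharmonicsn} with $k=n-1$. This is the exact analogue of the proof of Theorem~\ref{thm:cormack}, with the circular-harmonic formula \eqref{eq:sphericalharmonics} replaced by its Gegenbauer counterpart in dimension $n-1$.

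First I would note that, for each fixed $x\in\RR$, the map $\zz\mapsto f(x,\zz)$ is a smooth compactly supported function on $\RR^{n-1}$ (using that $f\in C^\infty_c(\RR^n)$ is even in $x$), so its regular $(n-1)$-dimensional Radon transform and the associated spherical coefficients are well defined. Equation~\eqref{eq:radonplanen} identifies that Radon transform: $\int_{\boldsymbol\theta^\perp} f(x,\boldsymbol\tau+s\boldsymbol\theta)\,d\boldsymbol\tau = 4\,I^{-1}_\rho R^*_{P_n}g(\boldsymbol\theta,s,x)$. I would then expand both sides in spherical harmonics in the angular variables $\boldsymbol\varphi=\zz/|\zz|$ and $\boldsymbol\theta$. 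On the right-hand side, orthogonality of the $Y_{lj}$ together with \eqref{eq:relationfourierandbackplanen} shows that the $lj$-th coefficient is $4\,I^{-1}_\rho R^*_{P_n}g_{lj}(s,x)$; on the left-hand side it is the $lj$-th spherical coefficient of the $(n-1)$-dimensional Radon transform of $f(x,\cdot)$, whose $lj$-th spherical coefficient is $f_{lj}(x,\cdot)$. Equating the two and applying \eqref{eq:sphericalharmonicsn} with $k=n-1$ (and with $g_{lj}(s)$ there replaced by $4\,I^{-1}_\rho R^*_{P_n}g_{lj}(s,x)$) produces the stated formula, once one checks that the prefactor $\frac{(-1)^{n-2}}{2\pi^{(n-1)/2}}\,\frac{\Gamma((n-3)/2)}{\Gamma(n-3)}$ appearing in \eqref{eq:sphericalharmonicsn} is precisely $c_{n-1}$, i.e. $c_n$ evaluated at $n-1$. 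One should also observe that the Riesz potential $I^{-1}_\rho$ acts only on the $x$-slot of $R^*_{P_n}g$, which is a variable distinct from the radial variable $t$ and the Radon parameter $s$ on which \eqref{eq:sphericalharmonicsn} operates; hence it simply commutes through the entire argument.

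The work is essentially bookkeeping, and the points that deserve care are: (i) the interchange of the spherical-harmonic summation with the Radon inversion and the Riesz potential $I^{-1}_\rho$, justified by the rapid decay of the coefficients for $f\in C^\infty_c$; (ii) the constant/exponent matching, i.e. confirming that \eqref{eq:sphericalharmonicsn} is applied in dimension $n-1$ so that $(s^2-t^2)^{(n-4)/2}$, the derivative order $\partial^{n-2}/\partial s^{n-2}$, the Gegenbauer index $(n-3)/2$, and the factor $t^{3-n}$ all come out as written; and (iii) the degenerate case $n=3$, where $k-1=2$ and one must instead invoke the circular-harmonic inversion \eqref{eq:sphericalharmonics}, recovering the earlier planar result. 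Since the genuinely analytic input — the Fourier/Hankel computation underlying \eqref{eq:invplanend} and the Cormack inversion \eqref{eq:sphericalharmonicsn} — is already available, I do not anticipate a real obstacle; the only mildly delicate step is keeping the dimension-shift $n\mapsto n-1$ straight throughout.
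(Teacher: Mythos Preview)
Your proposal is correct and follows essentially the same route as the paper: use \eqref{eq:radonplanen} to identify the $(n-1)$-dimensional Radon transform of $f(x,\cdot)$ with $4\,I^{-1}_\rho R^*_{P_n}g(\boldsymbol\theta,s,x)$, pass to spherical-harmonic coefficients via \eqref{eq:relationfourierandbackplanen}, and then apply the Cormack inversion \eqref{eq:sphericalharmonicsn} with $k=n-1$. The paper's proof is just the one-line instruction to combine these equations, and your write-up supplies exactly the bookkeeping (dimension shift, constant matching, role of $I^{-1}_\rho$) that this instruction entails.
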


Also, as in subsection~\ref{reconplane}, we have the following theorem.
\begin{theorem}
Let $f\in C^\infty_c(\RR^n)$ be even in $x$.
Then we have
$$
\intL_{\boldsymbol\theta^\perp}f(x,\boldsymbol\tau+s\boldsymbol\theta)d\boldsymbol\tau=\frac{2}{\pi}\intR\intR\intL\half sr R_{P_n}f(\boldsymbol\theta,-p,r)e^{-ir^2\sigma}e^{-i(2xp+(s^2+x^2)+p^2)\sigma}\sigma dr dp d\sigma.
$$
\end{theorem}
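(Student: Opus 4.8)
The plan is to repeat, almost verbatim, the proof of Theorem~\ref{thm:reddingandradon}: the Redding--Newsam device of integrating $rR_{P_n}f(\boldsymbol\theta,p,r)$ against $e^{-ir^2\sigma}$ flattens the angular integral over the cross-sectional circle, and the radial weight $r$ supplies the Jacobian of a polar-to-Cartesian change of coordinates. The only substantive differences from that proof are cosmetic: the line $\{t\boldsymbol\theta^\perp\}$ is replaced by the $(n-2)$-plane $\boldsymbol\theta^\perp$ and $\int_{\RR}(\cdot)\,dt$ by $\int_{\boldsymbol\theta^\perp}(\cdot)\,d\boldsymbol\tau$ (the extra $n-3$ directions entering only as passive parameters, so that the case of general $n\geq 3$ really is the case $n=3$ with $t$ replaced by $\boldsymbol\tau$); the offset $R$ is now $0$, since the detector planes pass through the origin; and the role played in Theorem~\ref{thm:reddingandradon} by the support hypothesis $f\in C_c^\infty(B^2_R\times\RR)$ is now played by the evenness of $f$ in $x$.

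Concretely, I would set
$$
G(\boldsymbol\theta,p,\sigma):=\int_0^\infty rR_{P_n}f(\boldsymbol\theta,p,r)e^{-ir^2\sigma}\,dr,
$$
insert the definition of $R_{P_n}f$, pass from $(r,\psi)$ to Cartesian coordinates in the $(x,\boldsymbol\theta)$-plane so that $r^2$ becomes a sum of two squares, translate the $\boldsymbol\theta$-coordinate by $p$, and expand the square to factor out $e^{-ip^2\sigma}$ and produce a kernel $e^{2ip(\cdot)\sigma}$ that is linear in the $\boldsymbol\theta$-coordinate. I would then change variables so that the sum of the two squares becomes a single variable $\rho$, solving for the $x$-variable; here the evenness of $f$ in $x$ folds the two branches $x=\pm\sqrt{\rho-(\cdot)^2}$ into one term, the set $\{x=0\}$ being null. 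With
$$
k_{\boldsymbol\theta}(\boldsymbol\tau,c,\rho):=\left\{\begin{array}{ll}f(\sqrt{\rho-c^2},\boldsymbol\tau+c\boldsymbol\theta)/\sqrt{\rho-c^2}&\mbox{if }0<c^2<\rho,\\ 0&\mbox{otherwise},\end{array}\right.
$$
one recognizes $G(\boldsymbol\theta,p,\sigma)$ as a constant multiple of $e^{-ip^2\sigma}\int_{\boldsymbol\theta^\perp}\widehat{k_{\boldsymbol\theta}}(\boldsymbol\tau,-2p\sigma,\sigma)\,d\boldsymbol\tau$, where the hat is the two-dimensional Fourier transform in the last two slots $(c,\rho)$ --- exactly as in the corresponding display for $G(\boldsymbol\theta,p,\xi)$ in the proof of Theorem~\ref{thm:reddingandradon}.

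To close the argument I would write the left-hand side $\int_{\boldsymbol\theta^\perp}f(x,\boldsymbol\tau+s\boldsymbol\theta)\,d\boldsymbol\tau$ as $|x|\int_{\boldsymbol\theta^\perp}k_{\boldsymbol\theta}(\boldsymbol\tau,s,x^2+s^2)\,d\boldsymbol\tau$, apply two-dimensional Fourier inversion to $k_{\boldsymbol\theta}$, substitute the identity relating $\int_{\boldsymbol\theta^\perp}\widehat{k_{\boldsymbol\theta}}\,d\boldsymbol\tau$ to $G$ obtained above (via $\eta=-2p\sigma$, which contributes a factor $e^{i\eta^2/(4\sigma)}$), rescale $\eta\mapsto 2\sigma\eta$, and unfold $G(\boldsymbol\theta,-\eta,\sigma)$ back into $\int_0^\infty rR_{P_n}f(\boldsymbol\theta,-\eta,r)e^{-ir^2\sigma}\,dr$; collecting the exponentials and the constant yields the stated triple integral, with the same constant and exponential structure as in Theorem~\ref{thm:reddingandradon} once the planar geometry (lines through the origin, detector parameter measured along $\boldsymbol\theta$) has been taken into account. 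Because $f\in C_c^\infty(\RR^n)$, every Fubini interchange and every Fourier-transform manipulation above is routinely justified, so I expect the only genuine difficulty to be the bookkeeping: carrying the four successive changes of variables through together with all Jacobians and the normalizations of the several Fourier transforms, and keeping the branch-folding via evenness clean.
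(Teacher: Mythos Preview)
Your proposal is correct and follows exactly the route the paper indicates: the paper gives no explicit proof for this theorem (nor for its three-dimensional planar antecedent), saying only that ``the proof is the same as that of Theorem~\ref{thm:reddingandradon} except for the obvious necessary changes,'' and your plan is precisely that, with the changes (hyperplane $\boldsymbol\theta^\perp$ in place of a line, offset $R$ replaced by $0$, evenness in $x$ replacing the support hypothesis to collapse the two branches of $\sqrt{\rho-c^2}$) identified correctly. Your write-up is in fact more explicit than anything the paper provides; the only care point is the bookkeeping you already flag --- in particular, tracking whether the prefactor emerging from $|x|\,k_{\boldsymbol\theta}(\boldsymbol\tau,s,x^2+s^2)$ matches the $s$ appearing in the stated formula once all substitutions are carried through.
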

As in section~\ref{plane}, we can obtain a stability estimate and necessary range conditions for $R_{P_n}$.
\begin{theorem}
For $\gamma\geq0$, there exists a constant $c_n$ such that for $f\in \mathcal H^\gamma_e(\RR^n)$,
$$
||f||_\gamma\leq c_n||R_{P_n}f||_{\gamma+n-2}.
$$
\end{theorem}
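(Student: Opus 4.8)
The plan is to copy the proof of Theorem~\ref{lem:stability} almost word for word, the only new ingredient being the Jacobian $|\sigma|^{n-2}$ that appears when one passes to polar coordinates in the $(n-1)$-dimensional frequency variable. First I would record the $n$-dimensional analogue of~\eqref{eq:hatandtildeplane}: writing $g=R_{P_n}f$ and carrying out the same shift in $p$ and passage to radial coordinates in $(\zeta,\rho)\in\RR^2$ as in~\eqref{eq:relationhankelandbackcylinder}, one obtains
$$
\widehat{R^*_{P_n}g}(\boldsymbol\theta,\sigma,\xi)=\tilde g(\boldsymbol\theta,\sigma,|(\sigma,\xi)|),
$$
with $\tilde g$ as defined in section~\ref{estiandrangeplane}. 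Combining this with the Fourier slice identity~\eqref{eq:invplanend} gives $\hat f(\xi,\sigma\boldsymbol\theta)=4|\xi|\,\tilde g(\boldsymbol\theta,\sigma,|(\sigma,\xi)|)$.

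Next I would expand, for $\boldsymbol\iota\in\RR^{n-1}$ and $\xi\in\RR$,
$$
\|f\|_\gamma^2=\intL_{\RR^n}(1+|\boldsymbol\iota|^2+|\xi|^2)^\gamma|\hat f(\xi,\boldsymbol\iota)|^2\,d\boldsymbol\iota\,d\xi,
$$
and pass to polar coordinates $\boldsymbol\iota=\sigma\boldsymbol\theta$, $\boldsymbol\theta\in S^{n-2}$, $\sigma\in\RR$; this contributes the Jacobian $|\sigma|^{n-2}$ and a factor $\tfrac12$ (using only that $S^{n-2}$ is invariant under $\boldsymbol\theta\mapsto-\boldsymbol\theta$). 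Using evenness of the integrand in $\xi$ to halve the $\xi$-range and substituting $\hat f(\xi,\sigma\boldsymbol\theta)=4|\xi|\tilde g(\boldsymbol\theta,\sigma,|(\sigma,\xi)|)$, the square of the norm becomes
$$
16\intL_{S^{n-2}}\intL_{\RR}\intL_0^\infty|\sigma|^{n-2}(1+|(\sigma,\xi)|^2)^\gamma|\xi|^2\,|\tilde g(\boldsymbol\theta,\sigma,|(\sigma,\xi)|)|^2\,d\xi\,d\sigma\,dS(\boldsymbol\theta).
$$
For fixed $\sigma$ I would then substitute $\eta=\sqrt{\sigma^2+\xi^2}$, so that $|\xi|^2\,d\xi=\sqrt{\eta^2-\sigma^2}\,\eta\,d\eta$ on $\eta\in(|\sigma|,\infty)$ --- exactly the change of variables used in the proof of Theorem~\ref{lem:stability}, but with $|\sigma|$ replaced by $|\sigma|^{n-2}$.

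It remains to bound $|\sigma|^{n-2}\sqrt{\eta^2-\sigma^2}\le\eta^{n-2}\cdot\eta=\eta^{n-1}\le(1+\eta^2)^{(n-1)/2}\le(1+\eta^2)^{n-2}$, where the last inequality uses $n\ge3$; since also $(1+\eta^2)^{\gamma+n-2}\le(1+|\sigma|^2+\eta^2)^{\gamma+n-2}$, extending the $\eta$-integral back to $(0,\infty)$ yields $\|f\|_\gamma^2\le16\,\|R_{P_n}f\|_{\gamma+n-2}^2$, so $c_n=4$ works. There is no genuinely hard step; the only point requiring care is the extra power $|\sigma|^{n-2}$ coming from the $(n-1)$-dimensional polar decomposition. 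It is worth remarking that this argument in fact produces the sharper exponent $\gamma+(n-1)/2$, and the stated bound with $\gamma+n-2$ is merely its weakening, valid precisely because $(n-1)/2\le n-2$ for $n\ge3$.
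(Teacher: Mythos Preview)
Your proposal is correct and follows essentially the same route as the paper: derive $\hat f(\xi,\sigma\boldsymbol\theta)=4|\xi|\,\tilde g(\boldsymbol\theta,\sigma,|(\sigma,\xi)|)$ from~\eqref{eq:invplanend}, pass to polar coordinates in $\RR^{n-1}$ (picking up $|\sigma|^{n-2}$), change variables $\eta=|(\sigma,\xi)|$, and bound the resulting weight by $(1+\eta^2)^{n-2}$. Your final estimate $|\sigma|^{n-2}\sqrt{\eta^2-\sigma^2}\le\eta^{n-1}\le(1+\eta^2)^{n-2}$ is in fact a cleaner variant of the paper's bound (which routes through $|\sigma|^{n-2}(\eta^2-\sigma^2)^{(n-2)/2}$ and the AM--GM inequality $2|\sigma|\sqrt{\eta^2-\sigma^2}\le\eta^2$), and your remark that the argument actually yields the sharper order $\gamma+(n-1)/2$ is a nice observation.
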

\begin{proof}
Let $g=R_Pf$.
As in the proof of Theorem~\ref{lem:stability}, using equation~\eqref{eq:invplanend}, we have  
$$
\hat f(\xi,\sigma\boldsymbol\theta)=4|\xi|\tilde g(\boldsymbol\theta,\sigma,|(\sigma,\xi)|),
$$
and
$$
\begin{array}{ll}
||f||^2_\gamma&=\displaystyle\intR\intL_{\RR^{n-1}}(1+|\boldsymbol\iota|^2+|\xi|^2)^\gamma|\hat{f}(\xi,\boldsymbol\iota)|^2d\boldsymbol\iota d\xi\\
&=\displaystyle8\intL_{S^{n-2}}\intL_{\RR}\intR |\sigma|^{n-2}(1+|(\sigma,\xi)|^2)^\gamma|\xi|^{2}|\tilde g(\boldsymbol\theta,\sigma,|(\sigma,\xi)|)|^2 d\xi d\sigma dS(\boldsymbol\theta)\\
&=\displaystyle16\intL_{S^{n-2}}\intL_{\RR}\intL^\infty_{|\sigma|} \sqrt{\eta^2-\sigma^2}|\sigma|^{n-2}(1+\eta^2)^\gamma|\tilde g(\boldsymbol\theta,\sigma,\eta)|^2 \eta d\eta d\sigma dS(\boldsymbol\theta),
\end{array}
$$
Here, we changed the variables $|(\sigma,\xi)|$ to $\eta$.
Hence, we have
$$
\begin{array}{ll}
||f||^2_\gamma&\leq\displaystyle c_n\intL_{S^{n-2}}\intL_{\RR}\intL^\infty_{|\sigma|} |\sigma|^{n-2}(\eta^2-\sigma^2)^{(n-2)/2}(1+\eta^2)^\gamma|\tilde g(\boldsymbol\theta,\sigma,\eta)|^2 \eta d\eta d\sigma dS(\boldsymbol\theta)\\
&\leq\displaystyle c_n\intL_{S^{n-2}}\intL_{\RR}\intL^\infty_0(1+\eta^2)^{\gamma+n-2}|\tilde g(\boldsymbol\theta,\sigma,\eta)|^2\eta d\eta d\sigma dS(\boldsymbol\theta),
\end{array}
$$
since $2|\sigma|(\eta^2-\sigma^2)^{1/2}\leq \eta^2$.
\end{proof}

\begin{remark}
Theorem~\ref{thm:rangeplane} holds for $R_{P_n}$ for $n\geq 3$.
\end{remark}
\section{Conclusion}
In this article, we study two different versions of cylindrical Radon transforms arising in PAT. 
We describe some inversion formulas of these transforms and discuss their  stability estimate and necessary range conditions. 
\section*{Acknowledgements}
The author thanks P Kuchment and D Steinhauer for fruitful discussions.
We are also thankful to the referees for many suggestions that helped to improve this paper.
This work was supported in part by US NSF Grants DMS 0908208 and DMS 1211463.

\bibliographystyle{plain}

\end{document}